\newtheorem{thm}{Theorem}[subsection]
\newtheorem{lem}[thm]{Lemma}
\newtheorem{prop}[thm]{Proposition}
\newtheorem{cor}[thm]{Corollary}
\theoremstyle{definition}
\newtheorem{defi}[thm]{Definition}
\newtheorem{rem}[thm]{Remark}
\numberwithin{equation}{subsection}
\author{Hideya Watanabe}
\address{(H. Watanabe) College of Science, Rikkyo University, 3-34-1, Nishi-Ikebukuro, Toshima-ku, Tokyo, 171-8501, Japan}
\email{watanabehideya@gmail.com}
\date{\today}
\title{Integrable modules over quantum symmetric pair coideal subalgebras}
\subjclass[2020]{Primary~17B10; Secondary~17B37}
\begin{document}
\maketitle

\begin{abstract}
  We introduce the notion of integrable modules over $\imath$quantum groups (a.k.a. quantum symmetric pair coideal subalgebras).
  After determining a presentation of such modules, we prove that each integrable module over a quantum group is integrable when restricted to an $\imath$quantum group.
  As an application, we show that the space of matrix coefficients of all simple integrable modules over an $\imath$quantum group of finite type with specific parameters coincides with Bao-Song's coordinate ring of the $\imath$quantum group.
\end{abstract}


\section{Introduction}\label{sec: intro}
\subsection{Representation theory}
The quantum groups (a.k.a. quantized enveloping algebras) were introduced by Drinfeld \cite{Dri85} and Jimbo \cite{Jim85} independently, and their representation theory has been studied for many years and been applied to a number of areas of mathematics and mathematical physics such as orthogonal polynomials, combinatorics, knot theory, and integrable systems.
One of the central objects in the representation theory of quantum groups is the integrable modules.
The notion of integrable modules originates in the representation theory of semisimple Lie algebras.
In this context, an integrable module is a module over a semisimple Lie algebra which can be integrated to a representation of a corresponding Lie group.
The integrability can be described as the local nilpotency of the Chevalley generators of the Lie algebra.
This description can be used to define the notion of integrable modules over quantum groups as it is.

The $\imath$quantum groups (a.k.a. quantum symmetric pair coideal subalgebras) of classical type were constructed by using solutions to reflection equation in some papers such as \cite{NoSu95}, \cite{Dij96}, \cite{Nou96} in order to perform noncommutative harmonic analysis on quantum symmetric spaces.
Letzter \cite{Let99} provided a unified construction of $\imath$quantum groups of all finite types (without relying on solutions to reflection equation), and generalized earlier results just mentioned. 
Her construction was further generalized to infinite types by Kolb \cite{Kol14}.

It has been pointed out that the representation theory of $\imath$quantum groups develops very slowly (\cite{Let19}, \cite{Wat21b}).
In fact, we do not still know any general theory to classify the finite-dimensional simple modules over $\imath$quantum groups of finite type, although such $\imath$quantum groups are quantum deformations of the universal enveloping algebra of a finite-dimensional complex reductive Lie algebra.
Several type-dependent classification can be found in e.g. \cite{GaKl91}, \cite{IoKl05}, \cite{Mol06}, \cite{ItTe10}, \cite{Wat20}, \cite{Wen20}, \cite{Wat21b}, \cite{KoSt24}.
Since the $\imath$quantum groups have close resemblance to the quantum groups, it seems to be essential for better understanding of their representation theory to formulate the notion of \emph{integrable modules} over $\imath$quantum groups.
This is the aim of the present paper.

\subsection{Integrable modules}
Let us recall the definition of integrable modules over quantum groups.
Let $\mathbf{U}$ be a quantum group, and $\{ E_i, F_i \mid i \in I \}$ its Chevalley generators.
Then, a weight $\mathbf{U}$-module $V$ is said to be integrable if for each weight vector $v \in V$, there exist tuples $(a_i)_{i \in I}$, $(b_i)_{i \in I}$ of nonnegative integers such that
\[
  E_i^{(a_i+1)} v = F_i^{(b_i+1)} v = 0,
\]
where the elements $E_i^{(a_i+1)}$, $F_i^{(b_i+1)}$ are divided powers.

Let us turn to the $\imath$quantum groups.
An $\imath$quantum group is a certain subalgebra of a quantum group $\mathbf{U}$, and has a distinguished family $\{ E_j, F_j, B_k \mid j \in I_\bullet,\ k \in I_\circ \}$ of elements, which play similar roles to Chevalley generators of $\mathbf{U}$ in some sense.
Here, $I_\bullet$ is a subset of $I$, and $I_\circ$ is its complement $I \setminus I_\bullet$.
An easy idea to define the integrable modules over the $\imath$quantum group is the local nilpotency with respect to $E_j$, $F_j$, and $B_k$.
However, this is not acceptable since $B_k$ may act semisimply on many modules which should be included in the class of integrable modules.
Bao and Wang, in their theory of $\imath$canonical bases \cite{BaWa18}, \cite{BaWa21}, introduced the notion of $\imath$divided powers, which are analogues of the divided powers of quantum groups.
Using the $\imath$divided powers, one may be able to resolve the semisimplicity issue of $B_k$ above.
However, testing integrability would be too hard since the $\imath$divided powers are very complicated in general.

To overcome this difficulty, let us recall an alternative definition of the integrable $\mathbf{U}$-modules.
Let $X^+$ denote the set of dominant weights.
For each $\lambda \in X^+$, let $V(\lambda)$ denote the integrable highest weight module of highest weight $\lambda$ with highest weight vector $v_\lambda$.
Also, let ${}^\omega V(\lambda)$ denote the integrable lowest weight module of lowest weight $-\lambda$ with lowest weight vector ${}^\omega v_\lambda$.
Then, a weight $\mathbf{U}$-module $V$ is integrable if and only if for each weight vector $v \in V$, there exist $\lambda, \mu \in X^+$ and a $\mathbf{U}$-module homomorphism ${}^\omega V(\lambda) \otimes V(\mu) \rightarrow V$ which sends ${}^\omega v_\lambda \otimes v_\mu$ to $v$.
Namely, integrable $\mathbf{U}$-modules are locally quotients of various ${}^\omega V(\lambda) \otimes V(\mu)$.

The $\mathbf{U}$-modules ${}^\omega V(\lambda) \otimes V(\mu)$ appear in Lusztig's construction of the canonical basis of modified form of $\mathbf{U}$ (\cite{Lus92}).
They are replaced by certain $\mathbf{U}^\imath$-modules $L^\imath(\lambda, \mu)$ in Bao-Wang's theory of $\imath$canonical bases \cite{BaWa18}, \cite{BaWa21}.
Hence, it is natural to define an integrable $\mathbf{U}^\imath$-module to be locally a quotient of $L^\imath(\lambda, \mu)$.
This definition turns out to work well in the present paper as summarized below.

\subsection{Results}
First, we determine a presentation of the $\mathbf{U}^\imath$-modules $L^\imath(\lambda, \mu)$ (Theorem \ref{thm: E+F+F_circ = E'+F'+B'}).
This enables one to test if a given $\mathbf{U}^\imath$-module is integrable in a systematic way.
In many cases, one only needs to check the local nilpotency with respect to $E_j$, $F_j$, $B_k$ just like the quantum group case.
In the remaining case, one needs to investigate local semisimplicity of some $B_k$.
As an application, we show that each integrable $\mathbf{U}$-module is integrable as a $\mathbf{U}^\imath$-module (Proposition \ref{prop: int Umod is int Uimod}).

Next, we concentrate on the $\imath$quantum groups of finite type.
We also assume that the parameters of the $\mathbf{U}^\imath$ is chosen in a way such that the $\mathbf{U}^\imath$ is invariant under a certain anti-algebra involution $\rho$ on $\mathbf{U}$, and that the $\imath$canonical basis of the modified form of $\mathbf{U}^\imath$ is stable (or, strongly compatible).
It turns out that each simple integrable $\mathbf{U}^\imath$-module is finite-dimensional and appears as a submodule of a finite-dimensional $\mathbf{U}$-module.

Let us consider the space of matrix coefficients of all simple integrable $\mathbf{U}^\imath$-modules.
As in the quantum group case (\cite[Section 7]{Kas93}), it admits a Peter-Weyl type decomposition.
Furthermore, by using the stability of $\imath$canonical bases and the characterization of integrability, we show that this space has a basis which is dual to the $\imath$canonical basis of the modified form of $\mathbf{U}^\imath$ (Theorem \ref{thm: dual icb spans Oi}).
This shows that the space of matrix coefficients coincides with Bao-Song's quantized coordinate ring \cite{BaSo22}, and hence, gives an intrinsic description of the latter.

\subsection{Organization}
The present paper is organized as follows.
In Section \ref{sec: qg}, we review basic definitions and results concerning the structure and representation theory of quantum groups.
Section \ref{sec: qsp} is devoted to formulating integrable modules over $\imath$quantum groups.
In Section \ref{sec: int ui mod}, we give an integrability criterion, and then prove that each integrable module over a quantum group is integrable when restricted to an $\imath$quantum group.
As an application of this result, we investigate the space of matrix coefficients of all simple integrable modules over an $\imath$quantum group of finite type in Section \ref{sec: qca}.

\subsection*{Acknowledgments}
The author thanks Paul Terwilliger for comments on the representation theory of $\imath$quantum groups, especially the $q$-Onsager algebra.
He is also grateful to Stefan Kolb for fruitful discussion on a sufficient condition for a weight $\mathbf{U}^\imath$-module to be integrable, which stimulates him to formulate Proposition \ref{prop: local nilp => integrable}.
He would like to express his gratitude to the anonymous referees for valuable comments and suggestions.
This work was supported by JSPS KAKENHI Grant Number JP24K16903.

\section{Quantum groups}\label{sec: qg}
In this section, we prepare necessary notation and fundamental results regarding the quantum groups.
We more or less follow Lusztig's textbook \cite{Lus93} except in Subsection \ref{subsect: levi and parabolic}, where we discuss Levi and parabolic subalgebras of a quantum group.

\subsection{Cartan and root data}
Throughout the present paper, we fix an indeterminate $q$, a Cartan datum $I = (I, \cdot)$ and a $Y$- and $X$-regular root datum $(Y, X, \langle , \rangle, \Pi^\vee, \Pi)$ of type $I$.
For each $i,j \in I$ and $n \in \mathbb{Z}_{\geq 0}$, set
\begin{align*}
  &d_i := \frac{i \cdot i}{2}, \\
  &a_{i,j} := \frac{2i \cdot j}{i \cdot i}, \\
  &q_i := q^{d_i}, \\
  &[n]_i := \frac{q_i^n - q_i^{-n}}{q_i - q_i^{-1}}, \\
  &[n]_i! := [n]_i [n-1]_i \cdots [1]_i.
\end{align*}
For each $i \in I$, let $h_i \in \Pi^\vee$ and $\alpha_i \in \Pi$ denote the corresponding simple coroot and simple root, respectively; in particular, we have
\[
  \langle h_i, \alpha_j \rangle = a_{i,j} \ \text{ for all } i,j \in I.
\]

Let $\mathrm{Br}$ and $W$ denote the braid group and Weyl group associated with $I$, respectively, and $s_i$ the simple reflection corresponding to $i \in I$.
They act on $\mathbb{Z}[I]$, $Y$, and $X$ by
\[
  s_i j = j - a_{i,j} i, \quad s_i h = h - \langle h, \alpha_i \rangle h_i, \quad s_i \lambda = \lambda - \langle h_i, \lambda \rangle \alpha_i
\]
for each $i,j \in I,\ h \in Y,\ \lambda \in X$.

Let $Q := \sum_{i \in I} \mathbb{Z} \alpha_i \subseteq X$ denote the root lattice, and $Q^\pm := \pm \sum_{i \in I} \mathbb{Z}_{\geq 0} \alpha_i$ the positive and negative cones.

Let $\leq$ denote the dominance order on $X$, the partial ordering defined by declaring $\lambda \leq \mu$ to mean $\mu - \lambda \in Q^+$.

Let $X^+$ denote the set of dominant weights:
\[
  X^+ := \{ \lambda \in X \mid \langle h_i, \lambda \rangle \geq 0 \ \text{ for all } i \in I \}.
\]

\subsection{Quantum groups}
Let $\mathbf{U}$ denote the quantum group associated with the root datum $(Y, X, \langle , \rangle, \Pi^\vee, \Pi)$.
Namely, $\mathbf{U}$ is the unital associative $\mathbb{Q}(q)$-algebra with generators
\[
  \{ E_i,\ F_i,\ K_h \mid i \in I,\ h \in Y \}
\]
subject to the following relations:
for each $h,h_1,h_2 \in Y$ and $i,j \in I$,
\begin{align*}
  &K_0 = 1, \\
  &K_{h_1} K_{h_2} = K_{h_1 + h_2}, \\
  &K_h E_i = q^{\langle h, \alpha_i \rangle} E_i K_h, \\
  &K_h F_i = q^{\langle h, -\alpha_i \rangle} F_i K_h, \\
  &E_i F_j - F_j E_i = \delta_{i,j} \frac{K_i - K_i^{-1}}{q_i - q_i^{-1}}, \\
  &\sum_{r+s = 1-a_{i,j}} (-1)^s E_i^{(r)} E_j E_i^{(s)} = 0 \ \text{ if } i \ne j, \\
  &\sum_{r+s = 1-a_{i,j}} (-1)^s F_i^{(r)} F_j F_i^{(s)} = 0 \ \text{ if } i \ne j.
\end{align*}
where
\[
  K_i := K_{d_i h_i}, \quad E_i^{(n)} := \frac{1}{[n]_i!} E_i^n, \quad F_i^{(n)} := \frac{1}{[n]_i!} F_i^n.
\]

The braid group $\mathrm{Br}$ acts on $\mathbf{U}$ by $s_i \mapsto T''_{i,1}$ (\cite[\S 37.1]{Lus93}).
For each $w \in W$ with reduced expression $w = s_{i_1} \cdots s_{i_l}$, set
\[
  T_w := T''_{i_1,1} \cdots T''_{i_l,1}.
\]

Let $\mathbf{U}^-$, $\mathbf{U}^0$, and $\mathbf{U}^+$ denote the negative, Cartan, and positive parts of $\mathbf{U}$, respectively.
We have a triangular decomposition:
\begin{align}\label{eq: triangular decomposition of U}
  \mathbf{U} = \mathbf{U}^- \mathbf{U}^0 \mathbf{U}^+ \simeq \mathbf{U}^- \otimes \mathbf{U}^0 \otimes \mathbf{U}^+.
\end{align}

Let $\mathbf{B}(\pm \infty)$ denote the canonical bases of $\mathbf{U}^\mp$, respectively.
They are graded by $Q^\mp$:
\[
  \mathbf{B}(\pm \infty) = \bigsqcup_{\lambda \in Q^\mp} \mathbf{B}(\pm \infty)_\lambda,
\]
where
\[
  \mathbf{B}(\pm \infty)_\lambda := \mathbf{B}(\pm \infty) \cap \mathbf{U}^\mp_\lambda, \quad \mathbf{U}^\mp_\lambda := \{ u \in \mathbf{U}^\mp \mid K_h u K_{-h} = q^{\langle h, \lambda \rangle} u \ \text{ for all } h \in Y \}.
\]
For each $b \in \mathbf{B}(\pm \infty)_\lambda$, we set $\operatorname{wt}(b) := \lambda$.

Let $\omega$ denote the Chevalley involution on $\mathbf{U}$, that is, the algebra automorphism such that
\[
  \omega(E_i) = F_i, \ \omega(K_h) = K_{-h} \ \text{ for all } i \in I,\ h \in Y.
\]
Given a $\mathbf{U}$-module $V$, let ${}^\omega V = \{ {}^\omega v \mid v \in V \}$ denote the $\mathbf{U}$-module $V$ twisted by $\omega$:
\[
  u \cdot {}^\omega v = {}^\omega(\omega(u) v) \ \text{ for each } u \in \mathbf{U},\ v \in V.
\]

Let $\dot{\mathbf{U}}$ denote the modified form of $\mathbf{U}$ with idempotents $\{ 1_\lambda \mid \lambda \in X \}$, and $\dot{\mathbf{B}}$ the canonical basis of $\dot{\mathbf{U}}$.

\subsection{Weight and integrable modules}
A $\mathbf{U}$-module $V$ is said to be a weight module if it admits a linear space decomposition
\[
  V = \bigoplus_{\lambda \in X} V_\lambda
\]
such that
\[
  V_\lambda = \{ v \in V \mid K_h v = q^{\langle h, \lambda \rangle} v \ \text{ for all } h \in Y \}.
\]
For each $v \in V_\lambda \setminus \{ 0 \}$, we set $\operatorname{wt}(v) := \lambda$.

For each $\lambda \in X$, let $M(\lambda)$ denote the Verma module of highest weight $\lambda$ with highest weight vector $m_\lambda$.

For each $\lambda,\mu \in X$, set
\[
  M(\lambda, \mu) := {}^\omega M(\lambda) \otimes M(\mu)
\]
and
\[
  m_{\lambda, \mu} := {}^\omega m_\lambda \otimes m_\mu \in M(\lambda, \mu).
\]
The linear map
\[
  \dot{\mathbf{U}} 1_{-\lambda + \mu} \rightarrow M(\lambda, \mu);\ u \mapsto u m_{\lambda, \mu}
\]
is a $\mathbf{U}$-module isomorphism (\cite[23.3.1 (c)]{Lus93}).
The $\mathbf{U}$-module $M(\lambda, \mu)$ (or rather $\dot{\mathbf{U}} 1_{-\lambda+\mu}$) is a universal weight module of weight $-\lambda + \mu$ in the following sense.
For each weight $\mathbf{U}$-module $V$ and a weight vector $v \in V_{-\lambda+\mu}$, there exists a unique $\mathbf{U}$-module homomorphism $M(\lambda, \mu) \rightarrow V$ which sends $m_{\lambda, \mu}$ to $v$.

A weight $\mathbf{U}$-module $V$ is said to be integrable if for each $\lambda \in X$ and $v \in V_\lambda$, there exist $(a_i)_{i \in I}, (b_i)_{i \in I} \in \mathbb{Z}_{\geq 0}^I$ such that
\[
  E_i^{(a_i+1)} v = F_i^{(b_i+1)} v = 0 \ \text{ for all } i \in I.
\]

For each $\lambda \in X^+$, let $V(\lambda)$ denote the integrable highest weight module of highest weight $\lambda$.
It is the maximal integrable quotient of the Verma module $M(\lambda)$:
\[
  V(\lambda) := M(\lambda)/\sum_{i \in I} \mathbf{U} F_i^{(\langle h_i, \lambda \rangle + 1)} m_\lambda.
\]
Twisting it by the Chevalley involution $\omega$, we obtain
\begin{align}\label{eq: def rel of wV(lm)}
  {}^\omega V(\lambda) \simeq {}^\omega M(\lambda)/\sum_{i \in I} \mathbf{U} E_i^{(\langle h_i, \lambda \rangle + 1)} \cdot {}^\omega m_\lambda.
\end{align}
Let $v_\lambda \in V(\lambda)$ denote the image of the highest weight vector $m_\lambda \in M(\lambda)$.
Let $\mathbf{B}(\lambda)$ denote the canonical basis of $V(\lambda)$.
The linear map
\[
  \mathbf{U}^+ \rightarrow {}^\omega V(\lambda);\ u \mapsto u \cdot {}^\omega v_\lambda
\]
gives rise to a bijection
\[
  \mathbf{B}(-\infty)[-\lambda] := \{ b \in \mathbf{B}(-\infty) \mid b \cdot {}^\omega v_\lambda \neq 0 \} \rightarrow {}^\omega \mathbf{B}(\lambda) := \{ {}^\omega b \mid b \in \mathbf{B}(\lambda) \}.
\]
Combining this fact with the presentation \eqref{eq: def rel of wV(lm)}, we obtain
\begin{align}\label{eq: def rel of wV(lm) in cb}
  \sum_{b \in \mathbf{B}(-\infty) \setminus \mathbf{B}(-\infty)[-\lambda]} \mathbf{U} b \cdot {}^\omega m_\lambda = \sum_{i \in I} \mathbf{U} E_i^{(\langle h_i, \lambda \rangle + 1)} \cdot {}^\omega m_\lambda.
\end{align}

For each $\lambda, \mu \in X^+$, set
\[
  V(\lambda, \mu) := {}^\omega V(\lambda) \otimes V(\mu)
\]
and
\[
  v_{\lambda, \mu} := {}^\omega v_\lambda \otimes v_\mu \in V(\lambda, \mu).
\]
By \cite[Proposition 23.3.6]{Lus93}, we have
\begin{align}\label{eq: presentation V(lm,mu)}
  V(\lambda,\mu) \simeq M(\lambda,\mu)/(\sum_{i \in I} \mathbf{U} E_i^{(\langle h_i, \lambda \rangle + 1)} m_{\lambda,\mu} + \sum_{i \in I} \mathbf{U} F_i^{(\langle h_i, \mu \rangle + 1)} m_{\lambda,\mu}).
\end{align}

The $\mathbf{U}$-modules $V(\lambda, \mu)$ are universal integrable $\mathbf{U}$-modules in the following sense.
Let $V$ be an integrable $\mathbf{U}$-module and $v \in V$ a weight vector.
Then, there exist $\lambda, \mu \in X^+$ and a $\mathbf{U}$-module homomorphism $V(\lambda, \mu) \rightarrow V$ which sends $v_{\lambda, \mu}$ to $v$ (\cite[Proposition 23.3.10]{Lus93}).

\subsection{Levi and parabolic subalgebras}\label{subsect: levi and parabolic}
In this subsection, we fix a subset $J \subseteq I$, and set $K := I \setminus J$.
We often regard $J$ as a Cartan datum and
\[
  (Y, X, \langle , \rangle, \{ h_j \mid j \in J \}, \{ \alpha_j \mid j \in J \})
\]
as a root datum of type $J$.

Let $\mathbf{L} = \mathbf{L}_J$ denote the Levi subalgebra of the quantum group $\mathbf{U}$ associated with $J$, that is, the subalgebra generated by
\[
  \{ E_j,\ F_j,\ K_h \mid j \in J,\ h \in Y \}.
\]
The algebra $\mathbf{L}$ itself is the quantum group associated with the root datum of type $J$ above.
Let $\mathbf{L}^-$ and $\mathbf{L}^+$ denote the negative and the positive parts of $\mathbf{L}$, respectively.
Noting that the Cartan part of $\mathbf{L}$ is $\mathbf{U}^0$, we have a triangular decomposition
\begin{align}\label{eq: triangular decomposition of L}
  \mathbf{L} = \mathbf{L}^- \mathbf{U}^0 \mathbf{L}^+ \simeq \mathbf{L}^- \otimes \mathbf{U}^0 \otimes \mathbf{L}^+.
\end{align}

For each $\lambda \in X$, let $M_\mathbf{L}(\lambda)$ denote the Verma module over $\mathbf{L}$ of highest weight $\lambda$ with highest weight vector $m_{\mathbf{L}, \lambda}$.
Similarly, for each $\lambda, \mu \in X$, set $M_\mathbf{L}(\lambda, \mu) := {}^\omega M_\mathbf{L}(\lambda) \otimes M_\mathbf{L}(\mu)$ and $m_{\mathbf{L};\lambda,\mu} := {}^\omega m_{\mathbf{L},\lambda} \otimes m_{\mathbf{L},\mu} \in M_\mathbf{L}(\lambda, \mu)$.
Then, we have $M_\mathbf{L}(\lambda, \mu) \simeq \dot{\mathbf{L}} 1_{-\lambda + \mu}$ as $\mathbf{L}$-modules, where $\dot{\mathbf{L}}$ denotes the modified form of $\mathbf{L}$.

Let $\mathbf{P}^+ = \mathbf{P}^+_J$ and $\mathbf{P}^- = \mathbf{P}^-_J$ denote the parabolic and the opposite parabolic subalgebra of $\mathbf{U}$ associated with the subset $J$, respectively.
Namely, $\mathbf{P}^+$ is the subalgebra of $\mathbf{U}$ generated by $\mathbf{L}$ and $\{ E_k \mid k \in K \}$, and $\mathbf{P}^-$ generated by $\mathbf{L}$ and $\{ F_k \mid k \in K \}$.

We have triangular decompositions:
\begin{align}
  &\mathbf{P}^+ = \mathbf{L}^- \mathbf{U}^0 \mathbf{U}^+ \simeq \mathbf{L}^- \otimes \mathbf{U}^0 \otimes \mathbf{U}^+; \label{eq: triangular decomposition of P+}\\
  &\mathbf{P}^- = \mathbf{U}^- \mathbf{U}^0 \mathbf{L}^+ \simeq \mathbf{U}^- \otimes \mathbf{U}^0 \otimes \mathbf{L}^+; \label{eq: triangular decomposition of P-}
\end{align}

Although the algebras $\mathbf{P}^\pm$ are not quantum groups, we can construct their modified forms $\dot{\mathbf{P}}^\pm = \bigoplus_{\lambda \in X} \dot{\mathbf{P}}^\pm 1_\lambda$ in a canonical way.
Just like the modified forms of quantum groups, the $\dot{\mathbf{P}}^\pm$ have   natural $\mathbf{P}^\pm$-bimodule structure.
Furthermore, the $\mathbf{L}$-bimodule structure on $\mathbf{P}^\pm$ gives rise to an $\dot{\mathbf{L}}$-bimodule structure on $\dot{\mathbf{P}}^\pm$.

Let $\mathbf{R}^\pm = \mathbf{R}^\pm_J$ denote the nilradical part of $\mathbf{P}^\pm$.
That is, $\mathbf{R}^+$ is the two-sided ideal of $\mathbf{U}^+$ generated by $\{ E_k \mid k \in K \}$, and $\mathbf{R}^-$ generated by $\{ F_k \mid k \in K \}$.
Then, we have
\begin{align}\label{eq: levi decomposition of Upm}
  \mathbf{U}^\pm = \mathbf{L}^\pm \oplus \mathbf{R}^\pm.
\end{align}

\begin{prop}\label{prop: levi decomposition of P+}
  As a $\mathbb{Q}(q)$-linear space, we have
    \begin{align}\label{eq: levi decomp of P+}
      \mathbf{P}^+ = \mathbf{L} \oplus \mathbf{L}^- \mathbf{U}^0 \mathbf{R}^+.
    \end{align}
  Moreover, the subspace $\mathbf{L}^- \mathbf{U}^0 \mathbf{R}^+$ coincides with the two-sided ideal of $\mathbf{P}^+$ generated by $\{ E_k \mid k \in K \}$.
\end{prop}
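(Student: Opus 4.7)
The plan is to extract everything from the triangular decomposition \eqref{eq: triangular decomposition of P+} of $\mathbf{P}^+$, the triangular decomposition \eqref{eq: triangular decomposition of L} of $\mathbf{L}$, and the Levi decomposition \eqref{eq: levi decomposition of Upm} of $\mathbf{U}^+$, after which the ideal property will be read off from the $Q$-grading.

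For the direct sum decomposition itself, I would simply substitute $\mathbf{U}^+ = \mathbf{L}^+ \oplus \mathbf{R}^+$ into $\mathbf{P}^+ \simeq \mathbf{L}^- \otimes \mathbf{U}^0 \otimes \mathbf{U}^+$ to get
\[
  \mathbf{P}^+ \simeq (\mathbf{L}^- \otimes \mathbf{U}^0 \otimes \mathbf{L}^+) \oplus (\mathbf{L}^- \otimes \mathbf{U}^0 \otimes \mathbf{R}^+),
\]
and then identify the first summand with $\mathbf{L}$ via \eqref{eq: triangular decomposition of L}, which yields \eqref{eq: levi decomp of P+} at the vector space level. The next step is to describe $\mathbf{L}^- \mathbf{U}^0 \mathbf{R}^+$ in terms of the weight grading of $\mathbf{P}^+$. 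Writing $Q^+_J := \sum_{j \in J} \mathbb{Z}_{\geq 0} \alpha_j$, I would observe that any monomial $E_{i_1} \cdots E_{i_r}$ whose weight lies in $Q^+_J$ is forced (by linear independence of the simple roots, built into $X$-regularity) to have every $i_l \in J$, and therefore lies in $\mathbf{L}^+$. Combined with $\mathbf{U}^+ = \mathbf{L}^+ \oplus \mathbf{R}^+$, this refines to
\[
  \mathbf{L}^+ = \bigoplus_{\nu \in Q^+_J} \mathbf{U}^+_\nu, \qquad \mathbf{R}^+ = \bigoplus_{\nu \in Q^+ \setminus Q^+_J} \mathbf{U}^+_\nu,
\]
and hence, inside $\mathbf{P}^+ \simeq \mathbf{L}^- \otimes \mathbf{U}^0 \otimes \mathbf{U}^+$, the subspace $\mathbf{L}^- \mathbf{U}^0 \mathbf{R}^+$ coincides with the sum of those weight spaces $\mathbf{P}^+_\lambda$ for which the $\alpha_k$-coefficient of $\lambda$ is strictly positive for at least one $k \in K$.

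Given this grading description, the ideal property is immediate. Every weight $\mu$ appearing in $\mathbf{P}^+$ has $\alpha_k$-coefficient $\geq 0$ for each $k \in K$ (since $\mathbf{L}^-$ contributes $0$ and $\mathbf{U}^+$ contributes $\geq 0$), so if $\lambda$ has strictly positive $\alpha_k$-coefficient for some $k \in K$ then so does $\lambda + \mu$. Hence $\mathbf{L}^- \mathbf{U}^0 \mathbf{R}^+$ is a two-sided ideal of $\mathbf{P}^+$, and it manifestly contains every $E_k$ with $k \in K$. Conversely, any two-sided ideal of $\mathbf{P}^+$ containing $\{E_k \mid k \in K\}$ contains $\mathbf{R}^+$ (the two-sided ideal of $\mathbf{U}^+$ they generate) and is closed under left multiplication by $\mathbf{L}^- \mathbf{U}^0$, so it contains $\mathbf{L}^- \mathbf{U}^0 \mathbf{R}^+$. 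The only real content of the argument is the weight-space description of $\mathbf{R}^+$; after that, everything reduces to bookkeeping on the $Q$-grading, which sidesteps the otherwise awkward task of directly commuting elements of $\mathbf{L}^-$ past elements of $\mathbf{R}^+$.
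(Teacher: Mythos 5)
Your argument is correct. The first half (the direct sum decomposition) is exactly the paper's argument: substitute $\mathbf{U}^+ = \mathbf{L}^+ \oplus \mathbf{R}^+$ into the triangular decomposition \eqref{eq: triangular decomposition of P+} and identify $\mathbf{L}^- \mathbf{U}^0 \mathbf{L}^+$ with $\mathbf{L}$ via \eqref{eq: triangular decomposition of L}. For the second half you diverge: the paper argues by direct manipulation of products, writing $\mathbf{L}^- \mathbf{U}^0 \mathbf{R}^+ = \sum_{k} \mathbf{P}^+ E_k \mathbf{U}^+$ and then showing $\mathbf{P}^+ E_k \mathbf{P}^+ = \mathbf{P}^+ E_k \mathbf{U}^+$ by commuting the single generator $E_k$ past $\mathbf{L}^- \mathbf{U}^0$ (which is painless, since $E_k$ commutes with $F_j$ for $j \in J$ and normalizes $\mathbf{U}^0$), whereas you characterize $\mathbf{L}^- \mathbf{U}^0 \mathbf{R}^+$ as the span of those $Q$-weight spaces of $\mathbf{P}^+$ whose weight has a strictly positive $\alpha_k$-coefficient for some $k \in K$, observe that this set of weights is absorbing under addition of weights occurring in $\mathbf{P}^+$, and then run both inclusions between this graded ideal and the ideal generated by the $E_k$. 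Your route trades one short commutation computation for a grading argument; it is slightly longer but arguably more robust, since it never touches the multiplication beyond its compatibility with the $Q$-grading, and the key input (that $\mathbf{R}^+$ is exactly the sum of the $\mathbf{U}^+_\nu$ with $\nu \notin \sum_{j \in J}\mathbb{Z}_{\geq 0}\alpha_j$, which needs the $X$-regularity you cite) is standard. Both proofs are complete and correct.
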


\begin{proof}
  By the decompositions \eqref{eq: triangular decomposition of P+}, \eqref{eq: levi decomposition of Upm}, and \eqref{eq: triangular decomposition of L} we have
  \[
    \mathbf{P}^+ = \mathbf{L}^- \mathbf{U}^0 (\mathbf{L}^+ \oplus \mathbf{R}^+) = \mathbf{L} \oplus \mathbf{L}^- \mathbf{U}^0 \mathbf{R}^+.
  \]
  This proves the first assertion.

  To prove the second assertion, let us compute as
  \begin{align}\label{eq: L^-U^0R^+ = sum_k P^+E_kU^+}
    \mathbf{L}^- \mathbf{U}^0 \mathbf{R}^+ = \sum_{k \in K} \mathbf{L}^- \mathbf{U}^0 \mathbf{U}^+ E_k \mathbf{U}^+ = \sum_{k \in K} \mathbf{P}^+ E_k \mathbf{U}^+.
  \end{align}
  For each $k \in K$, we have
  \begin{align}\label{eq: P^+E_kP^+ = P^+E_kU^+}
    \mathbf{P}^+ E_k \mathbf{P}^+
    = \mathbf{P}^+ E_k \mathbf{L}^- \mathbf{U}^0 \mathbf{U}^+
    = \mathbf{P}^+ \mathbf{L}^- \mathbf{U}^0 E_k \mathbf{U}^+
    = \mathbf{P}^+ E_k \mathbf{U}^+.
  \end{align}
  Combining equations \eqref{eq: L^-U^0R^+ = sum_k P^+E_kU^+} and \eqref{eq: P^+E_kP^+ = P^+E_kU^+}, we obtain
  \[
    \mathbf{L}^- \mathbf{U}^0 \mathbf{R}^+ = \sum_{k \in K} \mathbf{P}^+ E_k \mathbf{U}^+ = \sum_{k \in K} \mathbf{P}^+ E_k \mathbf{P}^+,
  \]
  as desired.
  Thus, we complete the proof.
\end{proof}

\begin{lem}\label{lem: P- simeq U/UR+}
  Let $\lambda \in X$.
  We denote by $1^\mathbf{U}_\lambda$ and $1^{\mathbf{P}^-}_\lambda$ the idempotents in $\dot{\mathbf{U}}$ and $\dot{\mathbf{P}}^-$ corresponding to $\lambda$, respectively.
  Then, as $\mathbb{Q}(q)$-linear spaces, we have
  \begin{align}\label{eq: P^- simeq U/UR^+}
    \mathbf{P}^- 1^{\mathbf{P}^-}_\lambda \simeq \mathbf{U} 1^\mathbf{U}_\lambda/\mathbf{U} \mathbf{R}^+ 1^\mathbf{U}_\lambda.
  \end{align}
\end{lem}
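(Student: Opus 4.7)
The plan is to use the triangular decompositions from the previous subsection to realize both sides of \eqref{eq: P^- simeq U/UR^+} as copies of the same concrete linear space $\mathbf{U}^- \otimes \mathbf{L}^+$, and then to produce the required isomorphism as the map induced by the inclusion $\mathbf{P}^- \hookrightarrow \mathbf{U}$.

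First, I would spell out PBW-type descriptions of each side. Since the idempotent $1^\mathbf{U}_\lambda \in \dot{\mathbf{U}}$ satisfies $K_h \cdot 1^\mathbf{U}_\lambda = q^{\langle h, \lambda \rangle} 1^\mathbf{U}_\lambda$, the triangular decomposition \eqref{eq: triangular decomposition of U} gives a $\mathbb{Q}(q)$-linear isomorphism
\[
  \mathbf{U}^- \otimes \mathbf{U}^+ \xrightarrow{\sim} \mathbf{U} 1^\mathbf{U}_\lambda, \qquad u^- \otimes u^+ \mapsto u^- u^+ 1^\mathbf{U}_\lambda,
\]
and the analogous argument applied to \eqref{eq: triangular decomposition of P-} produces $\mathbf{U}^- \otimes \mathbf{L}^+ \xrightarrow{\sim} \mathbf{P}^- 1^{\mathbf{P}^-}_\lambda$.

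Next, I would show that $\mathbf{U} \mathbf{R}^+ 1^\mathbf{U}_\lambda$ corresponds to $\mathbf{U}^- \otimes \mathbf{R}^+$ under the first isomorphism. The key step is the equality
\[
  \mathbf{U} \mathbf{R}^+ 1^\mathbf{U}_\lambda = \mathbf{U}^- \mathbf{R}^+ 1^\mathbf{U}_\lambda,
\]
which follows from $\mathbf{U} = \mathbf{U}^- \mathbf{U}^0 \mathbf{U}^+$ together with the facts that $\mathbf{R}^+$ is a two-sided ideal of $\mathbf{U}^+$ (so $\mathbf{U}^+ \mathbf{R}^+ \subseteq \mathbf{R}^+$) and that $\mathbf{U}^0$ normalizes $\mathbf{R}^+$, so $\mathbf{U}^0$ acts by scalars on each weight vector in $\mathbf{R}^+ 1^\mathbf{U}_\lambda$.

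Combining this with the Levi decomposition \eqref{eq: levi decomposition of Upm}, I obtain the $\mathbb{Q}(q)$-linear direct sum decomposition
\[
  \mathbf{U} 1^\mathbf{U}_\lambda = \mathbf{U}^- \mathbf{L}^+ 1^\mathbf{U}_\lambda \oplus \mathbf{U} \mathbf{R}^+ 1^\mathbf{U}_\lambda.
\]
The inclusion $\mathbf{P}^- \hookrightarrow \mathbf{U}$ then induces a well-defined linear map $\mathbf{P}^- 1^{\mathbf{P}^-}_\lambda \to \mathbf{U} 1^\mathbf{U}_\lambda / \mathbf{U} \mathbf{R}^+ 1^\mathbf{U}_\lambda$, whose source and target are both identified with $\mathbf{U}^- \otimes \mathbf{L}^+$ under the two PBW isomorphisms in such a way that the induced map is the identity, yielding \eqref{eq: P^- simeq U/UR^+}. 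The only real obstacle is the bookkeeping in the middle step, namely checking carefully that $K_h$ commutes past $\mathbf{R}^+$ with a $q$-scalar when applied to $1^\mathbf{U}_\lambda$ so that $\mathbf{U} \mathbf{R}^+ 1^\mathbf{U}_\lambda$ truly collapses to $\mathbf{U}^- \mathbf{R}^+ 1^\mathbf{U}_\lambda$; everything else is a direct application of the triangular decompositions already recorded in this subsection.
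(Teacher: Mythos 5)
Your proof is correct and follows essentially the same route as the paper: both arguments combine the triangular decomposition \eqref{eq: triangular decomposition of U} with the Levi decomposition $\mathbf{U}^+ = \mathbf{L}^+ \oplus \mathbf{R}^+$ to split $\mathbf{U}1^\mathbf{U}_\lambda$ as $\mathbf{P}^- 1^\mathbf{U}_\lambda \oplus \mathbf{U}\mathbf{R}^+ 1^\mathbf{U}_\lambda$, and then identify both $\mathbf{P}^- 1^\mathbf{U}_\lambda$ and $\mathbf{P}^- 1^{\mathbf{P}^-}_\lambda$ with $\mathbf{U}^- \otimes \mathbf{L}^+$ via the compatible triangular decompositions. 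Your explicit verification that $\mathbf{U}\mathbf{R}^+ 1^\mathbf{U}_\lambda = \mathbf{U}^-\mathbf{R}^+ 1^\mathbf{U}_\lambda$ is a detail the paper leaves implicit, but it is the same argument.
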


\begin{proof}
  By the decompositions \eqref{eq: triangular decomposition of U} and \eqref{eq: levi decomposition of Upm}, we have
  \[
    \mathbf{U} 1^\mathbf{U}_\lambda
    = \mathbf{U}^- \mathbf{U}^0 \mathbf{U}^+ 1^\mathbf{U}_\lambda
    = \mathbf{U}^- \mathbf{U}^0 (\mathbf{L}^+ \oplus \mathbf{R}^+) 1^\mathbf{U}_\lambda
    = (\mathbf{P}^- \oplus \mathbf{U} \mathbf{R}^+) 1^\mathbf{U}_\lambda.
  \]
  Hence, we obtain a linear isomorphism
  \begin{align}\label{eq: P1U simeq U1/UR+1}
    \mathbf{P}^- 1^\mathbf{U}_\lambda \simeq \mathbf{U} 1^\mathbf{U}_\lambda/\mathbf{U} \mathbf{R}^+ 1^\mathbf{U}_\lambda.
  \end{align}
  Since the triangular decomposition \eqref{eq: triangular decomposition of P-} of $\mathbf{P}^-$ is consistent with the one \eqref{eq: triangular decomposition of U} of $\mathbf{U}$, both the linear spaces $\mathbf{P}^- 1^\mathbf{U}_\lambda$ and $\mathbf{P}^- 1^{\mathbf{P}^-}_\lambda$ are isomorphic to $\mathbf{U}^- \otimes \mathbf{L}^+$ in canonical ways.
  Thus, we complete the proof.
\end{proof}

\begin{rem}
  The linear isomorphism \eqref{eq: P1U simeq U1/UR+1} has appeared in \cite[\S 3.5]{BaWa21}.
\end{rem}

\begin{prop}
  Let $V$ be a weight $\mathbf{L}$-module.
  Then, as a $\mathbf{U}$-module, we have
  \[
    \operatorname{Ind}_{\mathbf{P}^+}^\mathbf{U} V := \mathbf{U} \otimes_{\mathbf{P}^+} V = \dot{\mathbf{P}}^- \otimes_{\dot{\mathbf{L}}} V;
  \]
  here, we regard $V$ as a $\mathbf{P}^+$-module via the projection $\mathbf{P}^+ \rightarrow \mathbf{L}$ with respect to the decomposition \eqref{eq: levi decomp of P+}, and $\dot{\mathbf{P}}^- (= \bigoplus_{\lambda \in X} \mathbf{P}^- 1_\lambda)$ as a $\mathbf{U}$-module via the linear isomorphism \eqref{eq: P^- simeq U/UR^+}.
\end{prop}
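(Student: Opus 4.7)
The plan is to exhibit mutually inverse $\mathbf{U}$-linear maps between the two sides, using as the central ingredient the linear decomposition $\mathbf{U} 1^\mathbf{U}_\lambda = \mathbf{P}^- 1^\mathbf{U}_\lambda \oplus \mathbf{U} \mathbf{R}^+ 1^\mathbf{U}_\lambda$ that underlies Lemma~\ref{lem: P- simeq U/UR+}. First, I would define the forward map $\Phi \colon \dot{\mathbf{P}}^- \otimes_{\dot{\mathbf{L}}} V \to \mathbf{U} \otimes_{\mathbf{P}^+} V$ by sending $p 1^{\mathbf{P}^-}_\lambda \otimes v$ to $p \otimes v$ for $p \in \mathbf{P}^-$ and $v \in V_\lambda$, viewing $p$ as an element of $\mathbf{U}$ via $\mathbf{P}^- \hookrightarrow \mathbf{U}$. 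Well-definedness over $\dot{\mathbf{L}}$ is immediate from $\mathbf{L} \subseteq \mathbf{P}^+$. The $\mathbf{U}$-linearity requires unwinding the $\mathbf{U}$-action on $\dot{\mathbf{P}}^-$ (transported from left multiplication on $\mathbf{U} 1^\mathbf{U}_\lambda / \mathbf{U} \mathbf{R}^+ 1^\mathbf{U}_\lambda$): the element $u \cdot (p 1^{\mathbf{P}^-}_\lambda)$ equals the $\mathbf{P}^-$-component of $u p 1^\mathbf{U}_\lambda$ under the decomposition of Lemma~\ref{lem: P- simeq U/UR+}, so it differs from $u p$ by a term in $\mathbf{U} \mathbf{R}^+$, and any such term has image zero in $\mathbf{U} \otimes_{\mathbf{P}^+} V$ since $\mathbf{R}^+ v = 0$.

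Next, I would construct the inverse $\Psi$ by $\Psi(u \otimes v) := p 1^{\mathbf{P}^-}_\lambda \otimes v$, where $v \in V_\lambda$ and $u 1^\mathbf{U}_\lambda = p 1^\mathbf{U}_\lambda + w$ is the unique decomposition with $p \in \mathbf{P}^-$ and $w \in \mathbf{U} \mathbf{R}^+ 1^\mathbf{U}_\lambda$ guaranteed by the Lemma. The $\mathbf{P}^+$-balance splits into two cases. For $p' \in \mathbf{R}^+$, both sides vanish: the right since $p' v = 0$, the left since $u p' \in \mathbf{U} \mathbf{R}^+$ has trivial $\mathbf{P}^-$-part. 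For $p' \in \mathbf{L}$, I need to know that right multiplication by $\mathbf{L}$ preserves $\mathbf{U} \mathbf{R}^+$, so that right $\mathbf{L}$-multiplication descends to the quotient $\mathbf{U} 1^\mathbf{U}_\lambda / \mathbf{U} \mathbf{R}^+ 1^\mathbf{U}_\lambda$ and matches the right $\dot{\mathbf{L}}$-action on $\mathbf{P}^- 1^{\mathbf{P}^-}_\lambda$ coming from $\mathbf{L} \subseteq \mathbf{P}^-$; this reduces to $\mathbf{R}^+ \mathbf{L} \subseteq \mathbf{U} \mathbf{R}^+$, which follows from the triangular decomposition of $\mathbf{U}$ together with the relations $[E_k, F_j] = 0$ for $k \in K$ and $j \in J$. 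That $\Phi$ and $\Psi$ are mutually inverse is checked on generators using the uniqueness of the decomposition in the Lemma: $\Psi \circ \Phi$ is the identity tautologically, and $\Phi \circ \Psi(u \otimes v) = p \otimes v = u \otimes v$ by the same $\mathbf{R}^+$-vanishing argument used for linearity of $\Phi$.

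The step I expect to be the main obstacle is the $\mathbf{L}$-case of the $\mathbf{P}^+$-balance for $\Psi$: it requires careful bookkeeping of the idempotent shift (a vector $v \in V_\lambda$ maps to $p' v \in V_{\lambda + \operatorname{wt}(p')}$) and verification that the linear isomorphism of Lemma~\ref{lem: P- simeq U/UR+} is compatible with right $\mathbf{L}$-multiplication, equivalently that $\mathbf{U} \mathbf{R}^+$ is stable under right $\mathbf{L}$-multiplication. Everything else reduces to routine verification once the correct maps are in place.
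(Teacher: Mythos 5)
Your proposal is correct and follows essentially the same route as the paper: both exhibit the mutually inverse maps $p\,1_\lambda \otimes v \mapsto p \otimes v$ and the map extracting the $\mathbf{P}^-$-component of $u\,1_\lambda$ via the decomposition $\mathbf{U} 1_\lambda = \mathbf{P}^- 1_\lambda \oplus \mathbf{U}\mathbf{R}^+ 1_\lambda$ (the paper phrases the latter through the triangular decomposition $u = u^-u^0u^+$, which amounts to the same thing). The only difference is that you spell out the $\mathbf{P}^+$-balance and the stability of $\mathbf{U}\mathbf{R}^+$ under right $\mathbf{L}$-multiplication, which the paper dismisses as straightforwardly verified.
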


\begin{proof}
  Let
  \[
    \phi \colon \operatorname{Ind}_{\mathbf{P}^+}^\mathbf{U} V \rightarrow \dot{\mathbf{P}}^- \otimes_{\dot{\mathbf{L}}} V \text{ and } \psi \colon \dot{\mathbf{P}}^- \otimes_{\dot{\mathbf{L}}} V \rightarrow \operatorname{Ind}_{\mathbf{P}^+}^\mathbf{U} V
  \]
  denote the linear maps given as follows.
  Let $\lambda \in X$, $v \in V_\lambda$, $\mu \in Q^+$, $u^- \in \mathbf{U}^-$, $u^0 \in \mathbf{U}^0$, $u^+ \in \mathbf{U}^+_\mu$, and $p \in \mathbf{P}^-$.
  Then, we set
  \[
    \phi(u^-u^0u^+ \otimes v) := u^- 1_{\lambda+\mu} \otimes u^0u^+v \text{ and } \psi(p 1_\lambda \otimes v) := p \otimes v.
  \]
  That these maps are well-defined can be straightforwardly verified.
  Clearly, we have $\phi \circ \psi = \mathrm{id}$, and see that $\psi$ is a $\mathbf{U}$-module homomorphism.
  Thus, we complete the proof.
\end{proof}

\begin{cor}\label{cor: Ind ML simeq Pdot1}
  Let $\lambda, \mu \in X$.
  Then, as $\mathbf{U}$-modules, we have
  \[
    \operatorname{Ind}_{\mathbf{P}^+}^\mathbf{U} M_\mathbf{L}(\lambda, \mu)
    \simeq \operatorname{Ind}_{\mathbf{P}^+}^\mathbf{U} \dot{\mathbf{L}} 1_{-\lambda + \mu}
    \simeq \dot{\mathbf{P}}^- \otimes_{\dot{\mathbf{L}}} \dot{\mathbf{L}} 1_{-\lambda + \mu}
    \simeq \dot{\mathbf{P}}^- 1_{-\lambda+\mu}.
  \]
\end{cor}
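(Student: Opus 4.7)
The proof is essentially a short chain of three isomorphisms, each of which reduces to a result already established in this subsection, so the plan is just to identify the right reference for each arrow and check that the pieces fit together.

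For the first isomorphism, the plan is to apply the functor $\operatorname{Ind}_{\mathbf{P}^+}^\mathbf{U} = \mathbf{U} \otimes_{\mathbf{P}^+}(-)$ to the $\mathbf{L}$-module isomorphism $M_\mathbf{L}(\lambda,\mu) \simeq \dot{\mathbf{L}} 1_{-\lambda+\mu}$ that was already noted right after the definition of $M_\mathbf{L}(\lambda,\mu)$ (this is the Levi analogue of the universal property $M(\lambda,\mu) \simeq \dot{\mathbf{U}} 1_{-\lambda+\mu}$ coming from \cite[23.3.1(c)]{Lus93}). The only thing to verify is that this isomorphism is $\mathbf{L}$-linear with respect to the $\mathbf{P}^+$-action we put on each side, i.e.\ the one factoring through the projection $\mathbf{P}^+ \to \mathbf{L}$ of Proposition \ref{prop: levi decomposition of P+}; but both sides carry the standard weight $\mathbf{L}$-module structure, so there is nothing to check beyond what is already in the references.

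For the second isomorphism, I would simply invoke the preceding Proposition, applied with $V := \dot{\mathbf{L}} 1_{-\lambda+\mu}$, which is a weight $\mathbf{L}$-module because $\dot{\mathbf{L}} = \bigoplus_\nu \dot{\mathbf{L}} 1_\nu$ splits into weight spaces via the idempotents $1_\nu$. The Proposition identifies $\operatorname{Ind}_{\mathbf{P}^+}^\mathbf{U} V$ with $\dot{\mathbf{P}}^- \otimes_{\dot{\mathbf{L}}} V$.

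The third isomorphism $\dot{\mathbf{P}}^- \otimes_{\dot{\mathbf{L}}} \dot{\mathbf{L}} 1_{-\lambda+\mu} \simeq \dot{\mathbf{P}}^- 1_{-\lambda+\mu}$ is the general fact $M \otimes_A Ae \simeq Me$ for an idempotented algebra: the $\mathbf{U}$-linear map $p 1_\nu \otimes x 1_{-\lambda+\mu} \mapsto p 1_\nu \cdot x 1_{-\lambda+\mu}$ is surjective by construction and injective because a right $\dot{\mathbf{L}}$-linear splitting is given by $p 1_{-\lambda+\mu} \mapsto p 1_{-\lambda+\mu} \otimes 1_{-\lambda+\mu}$. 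I do not expect any serious obstacle: the only place where one must be slightly careful is that the $\mathbf{U}$-module structure on $\dot{\mathbf{P}}^-$ in the statement is the one transported through the linear isomorphism \eqref{eq: P^- simeq U/UR^+} of Lemma \ref{lem: P- simeq U/UR+}, so when chasing the last isomorphism one should verify that it is $\mathbf{U}$-equivariant under this transported action; this is immediate from the fact that the $\mathbf{L}$-action in the Proposition is just right multiplication in $\dot{\mathbf{P}}^-$ restricted to $\dot{\mathbf{L}}$, and the quotient description of the $\mathbf{U}$-action respects this right $\dot{\mathbf{L}}$-module structure by construction.
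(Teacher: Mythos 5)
Your proposal is correct and follows exactly the route the paper intends: the corollary is stated without proof precisely because it is the chain you describe, namely applying $\operatorname{Ind}_{\mathbf{P}^+}^\mathbf{U}$ to the isomorphism $M_\mathbf{L}(\lambda,\mu) \simeq \dot{\mathbf{L}}1_{-\lambda+\mu}$, invoking the preceding Proposition with $V = \dot{\mathbf{L}}1_{-\lambda+\mu}$, and finishing with the standard identification $M \otimes_A Ae \simeq Me$. Your added care about the transported $\mathbf{U}$-module structure on $\dot{\mathbf{P}}^-$ via \eqref{eq: P^- simeq U/UR^+} is a reasonable detail to record, but introduces nothing beyond what the paper's setup already guarantees.
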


\section{Quantum symmetric pairs}\label{sec: qsp}
After formulating basic notation regarding quantum symmetric pairs following mainly \cite{BaWa21}, we will introduce the notion of \emph{integrable modules} over an $\imath$quantum group.

\subsection{Quantum symmetric pair}
Let $(I, I_\bullet, \tau)$ be a generalized Satake diagram (\cite[Definition 1]{ReVl20}).
From now on, we assume that the root datum $(Y, X, \langle , \rangle, \Pi^\vee, \Pi)$ is a Satake datum of type $(I, I_\bullet, \tau)$ (\cite[Definition 3.1.3]{Wat23e}).
In particular, $I_\bullet$ is a subdatum of $I$ which is supposed to be of finite type, $\tau$ is an involutive automorphism on the Cartan datum $I$, and $Y$ and $X$ are supposed to be equipped with involutive automorphisms $\tau$ which are compatible with the one on $I$.
Let $w_\bullet \in W$ denote the longest element of the Weyl group $W_\bullet \subseteq W$ associated with $I_\bullet$.
Set
\begin{align*}
  &\theta := -w_\bullet \tau \in \operatorname{Aut}(\mathbb{Z}[I]), \operatorname{Aut}(Y), \operatorname{Aut}(X); \\
  &Y^\imath := \{ h \in Y \mid \theta(h) = h \}; \\
  &X^\imath := X/\{ \lambda - \theta(\lambda) \mid \lambda \in X \}; \\
  &I_\circ := I \setminus I_\bullet.
\end{align*}
Also, let
\[
  \overline{\cdot} \colon X \rightarrow X^\imath
\]
denote the quotient map, and
\[
  \langle , \rangle \colon Y^\imath \times X^\imath \rightarrow \mathbb{Z}
\]
the bilinear pairing induced from the one on $Y \times X$.

For later use, let us recall one of the axioms for generalized Satake diagrams from (\cite[(2.19)]{ReVl20}):
\begin{align}
  \langle h_i, \theta(\alpha_i) \rangle \neq -1 \ \text{ for all } i \in I.\label{gSat4}
\end{align}

\begin{lem}\label{lem: values in axioms for gSat}
  Let $k \in I_\circ$.
  If $\tau(k) = k \neq w_\bullet k$, then we have
  \[
    \langle h_k, w_\bullet \alpha_k - \alpha_k \rangle \leq -2.
  \]
\end{lem}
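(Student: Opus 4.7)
My plan is to compute $\langle h_k, w_\bullet \alpha_k - \alpha_k \rangle$ explicitly, establish the desired bound in two stages (first $\leq 0$, then $\leq -1$), and finally exclude the intermediate value $-1$ by means of axiom \eqref{gSat4}.

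First, I would expand $w_\bullet \alpha_k = \alpha_k + \sum_{j \in I_\bullet} n_j \alpha_j$. Since $k \in I_\circ$, the element $w_\bullet \alpha_k$ is a positive root with coefficient $1$ on $\alpha_k$ (a standard fact: each $s_j$ with $j \in I_\bullet$ alters only the $\alpha_j$-coefficient, and $W_\bullet$ stabilises the set of positive roots whose support meets $I_\circ$), so every $n_j$ is a non-negative integer. Pairing with $h_k$ and using $a_{k,j} \leq 0$ for $j \in I_\bullet$ yields the preliminary bound
\[
  \langle h_k, w_\bullet \alpha_k - \alpha_k \rangle = \sum_{j \in I_\bullet} n_j a_{k,j} \leq 0.
\]

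Next, I would sharpen this to $\leq -1$ by invoking the standard fact that the support of any real root in a Kac--Moody root system is a connected subset of the Dynkin diagram. The hypothesis $w_\bullet k \neq k$ forces $w_\bullet \alpha_k \neq \alpha_k$, so $n_j > 0$ for at least one $j \in I_\bullet$; connectedness of the support of $w_\bullet \alpha_k$ then guarantees the existence of some $j_0 \in I_\bullet$ with $n_{j_0} \geq 1$ that is adjacent to $k$, whence $a_{k, j_0} \leq -1$. This improves the preliminary bound to $\langle h_k, w_\bullet \alpha_k - \alpha_k \rangle \leq -1$.

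To conclude, I would apply \eqref{gSat4}. Since $\tau(k) = k$, we have $\theta(\alpha_k) = -w_\bullet \alpha_k$, whence
\[
  \langle h_k, \theta(\alpha_k) \rangle = -\langle h_k, w_\bullet \alpha_k \rangle = -2 - \langle h_k, w_\bullet \alpha_k - \alpha_k \rangle.
\]
If $\langle h_k, w_\bullet \alpha_k - \alpha_k \rangle$ were exactly $-1$, this would force $\langle h_k, \theta(\alpha_k) \rangle = -1$, contradicting \eqref{gSat4}; hence $\langle h_k, w_\bullet \alpha_k - \alpha_k \rangle \leq -2$. The only non-routine ingredient is the appeal to connectedness of the support of $w_\bullet \alpha_k$; everything else is direct bookkeeping combined with a single invocation of \eqref{gSat4}.
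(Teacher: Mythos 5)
Your proof is correct and follows essentially the same route as the paper: bound the pairing by $0$ using $w_\bullet \alpha_k - \alpha_k \in \sum_{j \in I_\bullet} \mathbb{Z}_{\geq 0}\alpha_j$, rule out the value $0$ from $w_\bullet k \neq k$, and rule out $-1$ via the axiom \eqref{gSat4} applied to $\theta(\alpha_k) = -w_\bullet\alpha_k$. The only difference is that you spell out, via connectedness of the support of the real root $w_\bullet\alpha_k$, the step excluding $0$, which the paper asserts without detail.
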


\begin{proof}
  Since $w_\bullet \alpha_k - \alpha_k \in \sum_{j \in I_\bullet} \mathbb{Z}_{\geq 0} \alpha_j$, we have
  \[
    \langle h_k, w_\bullet \alpha_k - \alpha_k \rangle \in \mathbb{Z}_{\leq 0}.
  \]
  Our assumption on $k$ that $w_\bullet k \neq k$ implies that this value cannot be $0$.

  On the other hand, taking into account the assumption that $\tau(k) = k$, we compute as follows:
  \[
    \langle h_k, w_\bullet \alpha_k - \alpha_k \rangle = \langle h_k, -\theta(\alpha_k) - \alpha_k \rangle = -2 - \langle h_k, \theta(\alpha_k) \rangle.
  \]
  By the axiom \eqref{gSat4}, the rightmost-hand side cannot be $-1$.
  Hence, the assertion follows.
\end{proof}

Let $\mathbf{U}^\imath$ denote the $\imath$quantum group associated with the Satake datum $(Y, X, \langle , \rangle, \Pi^\vee, \Pi)$ and parameters ${\boldsymbol \varsigma} = (\varsigma_k)_{k \in I_\circ} \in (\mathbb{Q}(q)^\times)^{I_\circ}$, ${\boldsymbol \kappa} = (\kappa_i)_{k \in I_\circ} \in \mathbb{Q}(q)^{I_\circ}$.
It is the subalgebra of the quantum group $\mathbf{U}$ generated by
\[
  \{ E_j,\ F_j,\ B_k,\ K_h \mid j \in I_\bullet,\ k \in I_\circ,\ h \in Y^\imath \},
\]
where
\[
  B_k := F_k + \varsigma_k T_{w_\bullet}(E_{\tau(k)}) K_k^{-1} + \kappa_k K_k^{-1}
\]
for each $k \in I_\circ$.
The parameters are supposed to satisfy the following for all $k \in I_\circ$:
\begin{itemize}
  \item $\varsigma_k = \varsigma_{\tau(k)}$ if $k \cdot \theta(k) = 0$.
  \item $\kappa_k = 0$ unless $\tau(k) = k$, $a_{k,j} = 0$ for all $j \in I_\bullet$, and $a_{k,k'} \in 2\mathbb{Z}$ for all $k' \in I_\circ$ such that $\tau(k') = k'$ and $a_{k',j} = 0$ for all $j \in I_\bullet$.
\end{itemize}
They are as general as \cite[Definition 5.6]{Kol14}.

Let $\dot{\mathbf{U}}^\imath$ denote the modified form of $\mathbf{U}^\imath$ with idempotents $\{ 1_\zeta \mid \zeta \in X^\imath \}$.

Let $\mathbf{L}$, $\mathbf{P}^\pm$, and $\mathbf{R}^\pm$ denote the Levi, parabolic subalgebras, and the nilradical parts associated with $I_\bullet$, respectively.

\begin{prop}[{{\it cf}.\ \cite[Lemma 3.22]{BaWa18}}]\label{prop: Uidot simeq Pdot-}
  Let $\lambda \in X$ and $\zeta := \bar{\lambda} \in X^\imath$.
  As $\mathbf{U}^\imath$-modules, we have
  \[
    \dot{\mathbf{U}}^\imath 1_\zeta \simeq \mathbf{U}^\imath 1_\lambda \simeq \dot{\mathbf{P}}^- 1_\lambda.
  \]
\end{prop}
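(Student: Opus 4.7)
The plan is to split the proof into two parts. The first isomorphism $\dot{\mathbf{U}}^\imath 1_\zeta \simeq \mathbf{U}^\imath 1_\lambda$ is almost tautological: for $h \in Y^\imath$, one has $\langle h, (1-\theta)\mu\rangle = 0$, so the scalar $\langle h, \lambda\rangle$ depends only on $\bar\lambda = \zeta$. Hence the natural surjection $\dot{\mathbf{U}}^\imath 1_\zeta \twoheadrightarrow \mathbf{U}^\imath 1_\lambda$ given by $u 1_\zeta \mapsto u 1_\lambda$ is well-defined, and its injectivity will follow once the second isomorphism $\mathbf{U}^\imath 1_\lambda \simeq \dot{\mathbf{P}}^- 1_\lambda$ is established.

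For the main isomorphism, identify $\dot{\mathbf{P}}^- 1_\lambda$ with $\mathbf{U} 1_\lambda / \mathbf{U}\mathbf{R}^+ 1_\lambda$ via Lemma~\ref{lem: P- simeq U/UR+} and show that the composition
\[
  \Phi \colon \mathbf{U}^\imath 1_\lambda \hookrightarrow \mathbf{U} 1_\lambda \twoheadrightarrow \mathbf{U} 1_\lambda / \mathbf{U}\mathbf{R}^+ 1_\lambda
\]
is a linear isomorphism. Surjectivity rests on the observation that $T_{w_\bullet}(E_{\tau(k)})$ is a positive root vector of weight $w_\bullet \alpha_{\tau(k)} = -\theta(\alpha_k)$, lying in $Q^+ \setminus Q_{I_\bullet}^+$ and hence in $\mathbf{R}^+$; combined with $K_k^{-1} 1_\lambda = q^{-\langle d_k h_k, \lambda\rangle} 1_\lambda$, this yields
\[
  B_k \cdot 1_\lambda \equiv F_k \cdot 1_\lambda + \kappa_k q^{-\langle d_k h_k, \lambda\rangle} \cdot 1_\lambda \pmod{\mathbf{U}\mathbf{R}^+ 1_\lambda}.
\]
Thus $F_k$ acts on $\bar{1}_\lambda := 1_\lambda + \mathbf{U}\mathbf{R}^+ 1_\lambda$ as the element $B_k - \kappa_k q^{-\langle d_k h_k, \lambda\rangle} \in \mathbf{U}^\imath$. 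Together with $E_j, F_j \in \mathbf{U}^\imath$ for $j \in I_\bullet$ and the fact that Cartan elements act as scalars on weight vectors, an induction on the weight (decomposing each $u \in \mathbf{U}^\imath$ into its weight-homogeneous components in $\mathbf{U}$) shows that $\mathbf{U}^\imath \bar{1}_\lambda = \mathbf{P}^- \bar{1}_\lambda = \dot{\mathbf{P}}^- 1_\lambda$.

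For injectivity, i.e.\ $\mathbf{U}^\imath 1_\lambda \cap \mathbf{U}\mathbf{R}^+ 1_\lambda = 0$, the key input is an Iwasawa-type decomposition of $\mathbf{U}$ relative to $\mathbf{U}^\imath$ (Letzter \cite{Let99}, Kolb \cite{Kol14}): multiplication gives a vector space isomorphism $\mathbf{U}^\imath \otimes \mathcal{M} \xrightarrow{\sim} \mathbf{U}$ for a complementary subspace $\mathcal{M}$ that can be arranged so that $\mathcal{M} 1_\lambda \subseteq \mathbf{U}\mathbf{R}^+ 1_\lambda$. Equivalently, an $\imath$PBW basis of $\mathbf{U}^\imath$ provides a basis of $\mathbf{U}^\imath 1_\lambda$ whose image under $\Phi$ is triangular with respect to a PBW basis of $\dot{\mathbf{P}}^- 1_\lambda$, matching dimensions in each weight. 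This injectivity step is the main obstacle: whereas surjectivity reduces to direct manipulations with the defining formula of $B_k$, injectivity requires structural input about $\mathbf{U}^\imath$ (either a PBW basis or the Iwasawa decomposition) that goes beyond its generator-and-relation description.
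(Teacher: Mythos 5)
The paper gives no proof of this proposition, deferring to \cite[Lemma 3.22]{BaWa18} (see also the remark after Lemma \ref{lem: P- simeq U/UR+} pointing to \cite[\S 3.5]{BaWa21}), and your outline reproduces exactly the argument of that reference: surjectivity from $T_{w_\bullet}(E_{\tau(k)}) \in \mathbf{R}^+$, hence $B_k 1_\lambda \equiv F_k 1_\lambda + \kappa_k q^{-\langle d_k h_k, \lambda\rangle} 1_\lambda \pmod{\mathbf{U}\mathbf{R}^+ 1_\lambda}$, together with an induction replacing each $F_k$ by $B_k$ up to lower-order terms, and injectivity from the PBW-type (Iwasawa) structure of $\mathbf{U}^\imath$. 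Your diagnosis that injectivity is the one step requiring structural input beyond the generator description of $\mathbf{U}^\imath$ is accurate, so the proposal is correct and follows the same route the paper implicitly relies on.
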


\subsection{Weight modules}
A $\mathbf{U}^\imath$-module $V$ is said to be a weight module (\cite[Definition 3.3.2]{Wat23}) if it admits a linear space decomposition
\[
  V = \bigoplus_{\zeta \in X^\imath} V_\zeta
\]
such that
\begin{itemize}
  \item $V_\zeta = \{ v \in V \mid K_h v = q^{\langle h, \zeta \rangle} v \ \text{ for all } h \in Y^\imath \}$;
  \item $E_j V_\zeta \subseteq V_{\zeta + \overline{\alpha_j}}$, $F_j V_\zeta \subseteq V_{\zeta - \overline{\alpha_j}}$ for all $j \in I_\bullet$;
  \item $B_k V_\zeta \subseteq V_{\zeta - \overline{\alpha_k}}$ for all $k \in I_\circ$;
\end{itemize}

A weight $\mathbf{U}$-module $V = \bigoplus_{\lambda \in X} V_\lambda$ has a canonical weight $\mathbf{U}^\imath$-module structure:
\begin{align}\label{eq: wt Umod as wt Uimod}
  V = \bigoplus_{\zeta \in X^\imath} V_\zeta, \ V_\zeta := \bigoplus_{\substack{\lambda \in X \\ \bar{\lambda} = \zeta}} V_\lambda.
\end{align}

For each $\lambda, \mu \in X$, set
\[
  M^\imath(\lambda, \mu) := \operatorname{Ind}_{\mathbf{P}^+}^\mathbf{U} M_\mathbf{L}(-w_\bullet \lambda, \mu)
\]
and
\[
  m^\imath_{\lambda, \mu} := 1 \otimes m_{\mathbf{L}; -w_\bullet \lambda, \mu} \in M^\imath(\lambda, \mu).
\]
Then, by Corollary \ref{cor: Ind ML simeq Pdot1}, we have
\begin{align}\label{eq: Mi simeq P^-}
  M^\imath(\lambda, \mu) \simeq \operatorname{Ind}_{\mathbf{P}^+}^\mathbf{U} \dot{\mathbf{L}} 1_{w_\bullet \lambda + \mu} \simeq \dot{\mathbf{P}}^- 1_{w_\bullet \lambda + \mu}
\end{align}
as $\mathbf{U}$-modules.
The $\mathbf{U}^\imath$-module $M^\imath(\lambda, \mu)$ is a universal weight module of weight $\overline{w_\bullet \lambda + \mu}$ in the following sense.

\begin{prop}
  Let $\lambda, \mu \in X$ and set $\zeta := \overline{w_\bullet \lambda + \mu} \in X^\imath$.
  Let $V$ be a weight $\mathbf{U}^\imath$-module and $v \in V_\zeta$.
  Then, there exists a unique $\mathbf{U}^\imath$-module homomorphism $M^\imath(\lambda, \mu) \rightarrow V$ which sends $m^\imath_{\lambda, \mu}$ to $v$.
\end{prop}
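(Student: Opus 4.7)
The plan is to reduce the claimed universal property for $M^\imath(\lambda, \mu)$ to the standard universal property of the modified form $\dot{\mathbf{U}}^\imath 1_\zeta$ in the weight category, by identifying these two $\mathbf{U}^\imath$-modules via results already established.

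First I would compose two identifications. Corollary \ref{cor: Ind ML simeq Pdot1} gives
\[
  M^\imath(\lambda, \mu) = \operatorname{Ind}_{\mathbf{P}^+}^\mathbf{U} M_\mathbf{L}(-w_\bullet \lambda, \mu) \simeq \dot{\mathbf{P}}^- 1_{w_\bullet \lambda + \mu}
\]
as $\mathbf{U}$-modules, with $m^\imath_{\lambda, \mu}$ corresponding to $1_{w_\bullet \lambda + \mu}$. Restricting to $\mathbf{U}^\imath$ and applying Proposition \ref{prop: Uidot simeq Pdot-} with $\lambda$ replaced by $w_\bullet \lambda + \mu$ yields a $\mathbf{U}^\imath$-module isomorphism $M^\imath(\lambda, \mu) \simeq \dot{\mathbf{U}}^\imath 1_\zeta$ sending $m^\imath_{\lambda, \mu}$ to $1_\zeta$.

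It then suffices to produce a unique $\mathbf{U}^\imath$-module homomorphism $\varphi \colon \dot{\mathbf{U}}^\imath 1_\zeta \rightarrow V$ with $\varphi(1_\zeta) = v$. Uniqueness is immediate because $1_\zeta$ generates $\dot{\mathbf{U}}^\imath 1_\zeta$. For existence, I would set $\varphi(u 1_\zeta) := u v$ for $u \in \mathbf{U}^\imath$; well-definedness follows from the hypothesis $v \in V_\zeta$, which ensures that each $K_h$ with $h \in Y^\imath$ acts on $v$ as $q^{\langle h, \zeta \rangle}$, exactly matching the relation $K_h 1_\zeta = q^{\langle h, \zeta \rangle} 1_\zeta$ in $\dot{\mathbf{U}}^\imath$.

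The only non-routine step---more of a bookkeeping matter than a genuine obstacle---is verifying that the composite isomorphism really carries $m^\imath_{\lambda, \mu} = 1 \otimes m_{\mathbf{L}; -w_\bullet \lambda, \mu}$ to $1_\zeta$; this is done by tracking the explicit maps appearing in the proposition preceding Corollary \ref{cor: Ind ML simeq Pdot1} and in Proposition \ref{prop: Uidot simeq Pdot-}. Once this identification is in place, the desired universal property is immediate.
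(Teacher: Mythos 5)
Your proposal is correct and follows essentially the same route as the paper: the paper likewise composes the isomorphism $M^\imath(\lambda,\mu)\simeq\dot{\mathbf{P}}^-1_{w_\bullet\lambda+\mu}$ from Corollary \ref{cor: Ind ML simeq Pdot1} with Proposition \ref{prop: Uidot simeq Pdot-} to identify $M^\imath(\lambda,\mu)\simeq\dot{\mathbf{U}}^\imath 1_\zeta$, and then invokes the universal property of $\dot{\mathbf{U}}^\imath 1_\zeta$ (citing \cite[after Definition 3.3.5]{Wat23} where you instead spell it out directly). The only cosmetic difference is that you verify that universal property by hand rather than by reference.
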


\begin{proof}
  By the isomorphism \eqref{eq: Mi simeq P^-} and Proposition \ref{prop: Uidot simeq Pdot-}, we have
  \[
    M^\imath(\lambda,\mu) \simeq \dot{\mathbf{U}}^\imath 1_\zeta
  \]
  as $\mathbf{U}^\imath$-modules.
  Then, the assertion can be found in \cite[after Definition 3.3.5]{Wat23}.
\end{proof}

\subsection{Integrable modules}
For each $\lambda, \mu \in X^+$, set $V^\imath(\lambda, \mu)$ to be the quotient $\mathbf{U}$-module of $M^\imath(\lambda, \mu)$ factored by the submodule generated by
\begin{align}\label{eq: def rel of Vi as quot of Mi}
  \{ E_j^{(\langle h_j, -w_\bullet \lambda \rangle + 1)} m^\imath_{\lambda, \mu},\ F_j^{(\langle h_j, \mu \rangle + 1)} m^\imath_{\lambda, \mu},\ F_k^{(\langle h_k, w_\bullet\lambda + \mu \rangle + 1)} m^\imath_{\lambda,\mu} \mid j \in I_\bullet,\ k \in I_\circ \}.
\end{align}
Let $v^\imath_{\lambda,\mu} \in V^\imath(\lambda,\mu)$ denote the image of $m^\imath_{\lambda,\mu} \in M^\imath(\lambda,\mu)$ under the quotient map.

\begin{lem}\label{lem: def rel of Vi as Umod}
  Let $\lambda, \mu \in X^+$.
  Then, the $\mathbf{U}$-module $V^\imath(\lambda, \mu)$ is isomorphic to the quotient module of $\dot{\mathbf{U}} 1_{w_\bullet \lambda + \mu}$ factored by the submodule generated by $\mathbf{R}^+ 1_{w_\bullet \lambda + \mu}$ and the set \eqref{eq: def rel of Vi as quot of Mi}
\end{lem}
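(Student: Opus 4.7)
The plan is to cascade the isomorphisms already established in Corollary~\ref{cor: Ind ML simeq Pdot1} and Lemma~\ref{lem: P- simeq U/UR+}, both applied with $J = I_\bullet$ so that $\mathbf{P}^\pm$ and $\mathbf{R}^\pm$ are those attached to $I_\bullet$. Running them with the weights $-w_\bullet \lambda$ and $\mu$, whose induced weight is $w_\bullet\lambda + \mu$, yields the chain of $\mathbf{U}$-module isomorphisms
\[
M^\imath(\lambda,\mu) \simeq \dot{\mathbf{P}}^- 1_{w_\bullet\lambda+\mu} \simeq \dot{\mathbf{U}} 1_{w_\bullet\lambda+\mu}/\dot{\mathbf{U}}\,\mathbf{R}^+\, 1_{w_\bullet\lambda+\mu},
\]
where the right-hand quotient is, by definition, $\dot{\mathbf{U}} 1_{w_\bullet\lambda+\mu}$ modulo the $\mathbf{U}$-submodule generated by the subset $\mathbf{R}^+ 1_{w_\bullet\lambda+\mu}$.

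Next I would track the cyclic generator $m^\imath_{\lambda,\mu} = 1\otimes m_{\mathbf{L}; -w_\bullet\lambda, \mu}$ through this chain. Under the identification $M_{\mathbf{L}}(-w_\bullet\lambda,\mu) \simeq \dot{\mathbf{L}} 1_{w_\bullet\lambda+\mu}$ recorded in Subsection~\ref{subsect: levi and parabolic}, $m_{\mathbf{L};-w_\bullet\lambda,\mu}$ corresponds to $1_{w_\bullet\lambda+\mu}$; the map of the proposition preceding Corollary~\ref{cor: Ind ML simeq Pdot1} then sends $1\otimes 1_{w_\bullet\lambda+\mu}$ to $1_{w_\bullet\lambda+\mu} \in \dot{\mathbf{P}}^- 1_{w_\bullet\lambda+\mu}$, and the isomorphism of Lemma~\ref{lem: P- simeq U/UR+} sends this in turn to the class of $1_{w_\bullet\lambda+\mu} \in \dot{\mathbf{U}} 1_{w_\bullet\lambda+\mu}$.

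With $m^\imath_{\lambda,\mu}$ thus identified with the class of $1_{w_\bullet\lambda+\mu}$, the $\mathbf{U}$-submodule of $M^\imath(\lambda,\mu)$ generated by the elements listed in \eqref{eq: def rel of Vi as quot of Mi} corresponds to the $\mathbf{U}$-submodule of $\dot{\mathbf{U}} 1_{w_\bullet\lambda+\mu}/\dot{\mathbf{U}}\,\mathbf{R}^+\, 1_{w_\bullet\lambda+\mu}$ generated by the same three families of divided powers acting on $1_{w_\bullet\lambda+\mu}$. Passing to the quotient then yields exactly the presentation of $V^\imath(\lambda,\mu)$ claimed. The only substantive point along the way is checking that the linear isomorphism of Lemma~\ref{lem: P- simeq U/UR+} is an isomorphism of $\mathbf{U}$-modules once $\dot{\mathbf{P}}^-$ carries the induced $\mathbf{U}$-action, which is how the lemma is already used in the adjacent results and is tautological from the construction; beyond this, the argument is pure bookkeeping and no serious obstacle arises.
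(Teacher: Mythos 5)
Your argument is correct and follows the same route as the paper's own (one-line) proof: both deduce the presentation by combining the definition of $V^\imath(\lambda,\mu)$ with the isomorphisms \eqref{eq: Mi simeq P^-} and \eqref{eq: P^- simeq U/UR^+}, the $\mathbf{U}$-module structure on $\dot{\mathbf{P}}^-$ being tautologically transported through the latter exactly as you note. Your version merely spells out the bookkeeping (tracking $m^\imath_{\lambda,\mu}$ to the class of $1_{w_\bullet\lambda+\mu}$) that the paper leaves implicit.
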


\begin{proof}
  The assertion follows from the definition of $V^\imath(\lambda, \mu)$ and the isomorphisms \eqref{eq: Mi simeq P^-} and \eqref{eq: P^- simeq U/UR^+}.
\end{proof}

\begin{lem}\label{lem: Umod str is determined by Lmod str}
  Let $\lambda, \mu \in X^+$.
  Then, there exist a finite number of dominant weights $\nu_1,\dots,\nu_r \in X^+$ such that
  \[
    \mathbf{L} v^\imath_{\lambda, \mu} \simeq \bigoplus_{s=1}^r V_\mathbf{L}(\nu_s)
  \]
  as $\mathbf{L}$-modules and
  \[
    V^\imath(\lambda, \mu) \simeq \bigoplus_{s=1}^r V(\nu_s)
  \]
  as $\mathbf{U}$-modules, where $V_\mathbf{L}(\nu)$ denotes the integrable highest weight $\mathbf{L}$-module of highest weight $\nu \in X_\mathbf{L}^+$.
\end{lem}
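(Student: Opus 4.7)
The plan is to construct a $\mathbf{U}$-module isomorphism
\[
\Phi \colon \bigoplus_{s=1}^r V(\nu_s) \xrightarrow{\sim} V^\imath(\lambda, \mu),
\]
where the $\nu_s$ come from the $\mathbf{L}$-decomposition of $\mathbf{L} v^\imath_{\lambda, \mu}$, and to identify each highest weight vector of $V(\nu_s)$ with the $\mathbf{L}$-highest weight vector of the corresponding $V_\mathbf{L}(\nu_s)$-summand. The crucial technical point will be that each such $\mathbf{L}$-highest weight vector is \emph{also} annihilated by every $E_k$ with $k \in I_\circ$ inside $V^\imath(\lambda, \mu)$.

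Two preliminary decompositions set the stage. On the $\mathbf{L}$ side, the relations \eqref{eq: def rel of Vi as quot of Mi} force $v^\imath_{\lambda, \mu}$ to satisfy the defining relations of $V_\mathbf{L}(-w_\bullet \lambda, \mu) = {}^\omega V_\mathbf{L}(-w_\bullet \lambda) \otimes V_\mathbf{L}(\mu)$ as a cyclic $\mathbf{L}$-module, so $\mathbf{L} v^\imath_{\lambda, \mu}$ is a quotient of this integrable $\mathbf{L}$-module; since $I_\bullet$ is of finite type, complete reducibility yields $\mathbf{L} v^\imath_{\lambda, \mu} \simeq \bigoplus_{s=1}^r V_\mathbf{L}(\nu_s)$ with $\nu_s \in X^+_\mathbf{L}$. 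On the $\mathbf{U}$ side, the same relations \eqref{eq: def rel of Vi as quot of Mi} together with $E_k v^\imath_{\lambda, \mu} = 0$ for $k \in I_\circ$ (because $E_k \in \mathbf{R}^+$ annihilates the base) show that $v^\imath_{\lambda, \mu}$ is integrable as a weight vector; since integrability is preserved under the $\mathbf{U}$-action on weight modules and $v^\imath_{\lambda, \mu}$ generates $V^\imath(\lambda, \mu)$ over $\mathbf{U}$, the module $V^\imath(\lambda, \mu)$ is integrable.

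The main step is to upgrade each $\mathbf{L}$-highest weight vector $w_s \in V_\mathbf{L}(\nu_s) \subseteq \mathbf{L} v^\imath_{\lambda, \mu}$ to a $\mathbf{U}$-highest weight vector of $V^\imath(\lambda, \mu)$. Using exactness of $\operatorname{Ind}_{\mathbf{P}^+}^{\mathbf{U}}$ together with Lemma~\ref{lem: def rel of Vi as Umod}, I realize $V^\imath(\lambda, \mu)$ as the quotient of $\operatorname{Ind}_{\mathbf{P}^+}^{\mathbf{U}} V_\mathbf{L}(-w_\bullet \lambda, \mu)$ by the $\mathbf{U}$-submodule generated by $F_k^{(\langle h_k, w_\bullet \lambda + \mu \rangle + 1)}(1 \otimes v^\imath_{\lambda, \mu})$ for $k \in I_\circ$. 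Choose an $\mathbf{L}$-splitting of the surjection $V_\mathbf{L}(-w_\bullet \lambda, \mu) \twoheadrightarrow \mathbf{L} v^\imath_{\lambda, \mu}$ (available by semisimplicity) and let $\tilde w_s \in V_\mathbf{L}(-w_\bullet \lambda, \mu)$ be the $\mathbf{L}$-highest weight vector mapping to $w_s$. Since $\mathbf{R}^+$ acts trivially on $V_\mathbf{L}(-w_\bullet \lambda, \mu)$ viewed as a $\mathbf{P}^+$-module, we have $E_k(1 \otimes \tilde w_s) = 0$ in the induction for each $k \in I_\circ$, which descends to $E_k w_s = 0$ in $V^\imath(\lambda, \mu)$. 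Combined with $E_j w_s = 0$ for $j \in I_\bullet$, the vector $w_s$ is $\mathbf{U}$-highest of weight $\nu_s$. Standard $\mathfrak{sl}_2$-theory for integrable modules then gives $\nu_s \in X^+$ and $\mathbf{U} w_s \simeq V(\nu_s)$.

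Finally, define $\Phi \colon \bigoplus_{s=1}^r V(\nu_s) \to V^\imath(\lambda, \mu)$ by sending the highest weight vector of the $s$-th summand to $w_s$. Surjectivity follows from $v^\imath_{\lambda, \mu} \in \mathbf{L} v^\imath_{\lambda, \mu} = \bigoplus_s \mathbf{L} w_s \subseteq \sum_s \mathbf{U} w_s$ and $V^\imath(\lambda, \mu) = \mathbf{U} v^\imath_{\lambda, \mu}$. For injectivity, any nonzero $\mathbf{U}$-submodule of the direct sum of integrable simples $\bigoplus_s V(\nu_s)$ contains a highest weight vector of the form $\sum_{s: \nu_s = \nu} c_s v_{\nu_s}$ with some $c_s \neq 0$; its image $\sum c_s w_s$ in $\mathbf{L} v^\imath_{\lambda, \mu} \subseteq V^\imath(\lambda, \mu)$ must vanish, but the $w_s$ with $\nu_s = \nu$ are highest weight vectors of distinct $V_\mathbf{L}(\nu)$-summands and therefore linearly independent, a contradiction. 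The main technical obstacle throughout is the verification $E_k w_s = 0$ in $V^\imath(\lambda, \mu)$ for $k \in I_\circ$, which cannot be read off from the $\mathbf{L}$-structure of $\mathbf{L} v^\imath_{\lambda, \mu}$ and relies essentially on the induction realization of $V^\imath(\lambda, \mu)$.
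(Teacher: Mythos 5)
Your proposal is correct and follows essentially the same route as the paper: decompose $\mathbf{L} v^\imath_{\lambda,\mu}$ as a quotient of $V_\mathbf{L}(-w_\bullet\lambda,\mu)$, use integrability of $V^\imath(\lambda,\mu)$, and observe that the $\mathbf{L}$-highest weight vectors are genuine $\mathbf{U}$-highest weight vectors because $\mathbf{R}^+$ annihilates $\mathbf{L} v^\imath_{\lambda,\mu}$. The paper packages this last point as the identity $\mathbf{U}^+ v^\imath_{\lambda,\mu} = \mathbf{L}^+ v^\imath_{\lambda,\mu}$ and then invokes the general structure theory of integrable modules with weights bounded above, whereas you verify the vanishing $E_k w_s = 0$ via the induced-module realization and build the isomorphism by hand; this is only a difference in presentation, not in substance.
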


\begin{proof}
  By the definition of $V^\imath(\lambda, \mu)$ and the presentation \eqref{eq: presentation V(lm,mu)} of $V_\mathbf{L}(-w_\bullet\lambda, \mu)$, there exists an $\mathbf{L}$-module homomorphism
  \[
    V_\mathbf{L}(-w_\bullet \lambda, \mu) \rightarrow \mathbf{L} v^\imath_{\lambda, \mu}
  \]
  which sends $v_{\mathbf{L};-w_\bullet\lambda, \mu}$ to $v^\imath_{\lambda, \mu}$.
  This homomorphism is clearly surjective.
  Since the domain is finite-dimensional, so is the codomain.
  This implies that there exist $\nu_1,\dots,\nu_r \in X_\mathbf{L}^+$ such that
  \[
    \mathbf{L} v^\imath_{\lambda, \mu} \simeq \bigoplus_{s=1}^r V_\mathbf{L}(\nu_s).
  \]

  On the other hand, the $\mathbf{U}$-module $V^\imath(\lambda, \mu)$ is integrable, and
  \begin{align}\label{eq: U^+vi = L^+vi}
    \mathbf{U}^+ v^\imath_{\lambda, \mu} = \mathbf{L}^+ v^\imath_{\lambda, \mu}
  \end{align}
  Since the right-hand side is finite-dimensional, the weights of $V^\imath(\lambda, \mu)$ are bounded above.
  Therefore, the $\mathbf{U}$-module is a direct sum of submodules isomorphic to integrable highest weight modules.
  By equation \eqref{eq: U^+vi = L^+vi}, we see that the highest weight vectors in the $\mathbf{U}$-module $V^\imath(\lambda, \mu)$ are exactly those in the $\mathbf{L}$-module $\mathbf{L} v^\imath_{\lambda, \mu}$.
  This implies that
  \[
    \nu_1,\dots,\nu_r \in X^+
  \]
  and
  \[
    V^\imath(\lambda, \mu) \simeq \bigoplus_{s=1}^r V(\nu_s).
  \]
  Thus, we complete the proof.
\end{proof}

\begin{prop}\label{prop: Vi simeq Li}
  Let $L^\imath(\lambda,\mu)$ denote the $\mathbf{U}$-submodule of $V(\lambda) \otimes V(\mu)$ generated by $v_{w_\bullet \lambda} \otimes v_\mu$, where $v_{w_\bullet \lambda}$ denotes the unique element in $\mathbf{B}(\lambda)$ of weight $w_\bullet \lambda$.
  Then, we have
  \[
    V^\imath(\lambda,\mu) \simeq L^\imath(\lambda,\mu)
  \]
  as $\mathbf{U}$-modules.
\end{prop}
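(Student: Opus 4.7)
The strategy is to construct a surjective $\mathbf{U}$-module map $\pi\colon V^\imath(\lambda,\mu)\twoheadrightarrow L^\imath(\lambda,\mu)$ sending $v^\imath_{\lambda,\mu}\mapsto v:=v_{w_\bullet\lambda}\otimes v_\mu$, and then to show $\pi$ is injective by running the structure argument of Lemma~\ref{lem: Umod str is determined by Lmod str} on both sides.

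By Lemma~\ref{lem: def rel of Vi as Umod}, building $\pi$ amounts to verifying on $v$ the four families of defining relations: $\mathbf{R}^+ v=0$, $E_j^{(\langle h_j,-w_\bullet\lambda\rangle+1)}v=F_j^{(\langle h_j,\mu\rangle+1)}v=0$ for $j\in I_\bullet$, and $F_k^{(\langle h_k,w_\bullet\lambda+\mu\rangle+1)}v=0$ for $k\in I_\circ$. Using the coproducts of the divided powers together with the fact that $v_\mu$ is the highest weight vector of $V(\mu)$ and $v_{w_\bullet\lambda}$ is the lowest weight vector of $V_\mathbf{L}(\lambda)=\mathbf{L}v_\lambda$ (so in particular $F_j v_{w_\bullet\lambda}=0$ and $E_j^{(\langle h_j,-w_\bullet\lambda\rangle+1)}v_{w_\bullet\lambda}=0$), the two $I_\bullet$-relations reduce to routine identities in the tensor product. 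The $F_k$-relation is a standard consequence of $E_k v=0$ (verified in the next paragraph) together with the integrability of $V(\lambda)\otimes V(\mu)$.

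The main obstacle is $\mathbf{R}^+ v=0$. The key input is the commutation $[E_k,F_j]=0$ for $k\in I_\circ$, $j\in I_\bullet$ (since $k\neq j$), together with $E_k v_\lambda=E_k v_\mu=0$; combined with the triangular decomposition $V_\mathbf{L}(\lambda)=\mathbf{L}^- v_\lambda$, this yields $E_k V_\mathbf{L}(\lambda)=0$, and analogously $E_k V_\mathbf{L}(\mu)=0$. The coproduct of divided powers then implies that $E_k^{(r)}$ annihilates $V_\mathbf{L}(\lambda)\otimes V_\mathbf{L}(\mu)$ for every $r\geq 1$ and $k\in I_\circ$; in particular $E_k v=0$. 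To upgrade this to $\mathbf{R}^+ v=0$, I would induct on the height of the weight: any homogeneous $b\in\mathbf{R}^+$ can be written $b=b_1 E_k b_2$ with $k\in I_\circ$ and $b_2\in\mathbf{U}^+$ of strictly smaller height, and the inductive hypothesis applied to the $\mathbf{R}^+$-part of $b_2$ forces $b_2 v\in\mathbf{L}^+ v\subseteq V_\mathbf{L}(\lambda)\otimes V_\mathbf{L}(\mu)$ (using that $\mathbf{L}$ is a Hopf subalgebra, so $\Delta(\mathbf{L}^+)\subseteq \mathbf{L}\otimes\mathbf{L}$), whence $E_k b_2 v=0$ and therefore $bv=0$.

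For injectivity of $\pi$, note that $\mathbf{L}v=V_\mathbf{L}(\lambda)\otimes V_\mathbf{L}(\mu)\simeq V_\mathbf{L}(-w_\bullet\lambda,\mu)$, matching $v$ with $v_{\mathbf{L};-w_\bullet\lambda,\mu}$. Composing the surjection $V_\mathbf{L}(-w_\bullet\lambda,\mu)\twoheadrightarrow\mathbf{L}v^\imath_{\lambda,\mu}$ from the proof of Lemma~\ref{lem: Umod str is determined by Lmod str} with $\pi|_{\mathbf{L}v^\imath}\colon\mathbf{L}v^\imath_{\lambda,\mu}\twoheadrightarrow\mathbf{L}v$ gives a self-surjection of a finite-dimensional $\mathbf{L}$-module, hence an isomorphism, so both arrows are isomorphisms; consequently $\mathbf{L}v^\imath_{\lambda,\mu}$ and $\mathbf{L}v$ share the same $\mathbf{L}$-isotypic decomposition $\bigoplus_s V_\mathbf{L}(\nu_s)$. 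Since $L^\imath(\lambda,\mu)$ satisfies the hypotheses of Lemma~\ref{lem: Umod str is determined by Lmod str} (integrability from $V(\lambda)\otimes V(\mu)$; $\mathbf{U}^+ v=\mathbf{L}^+ v$ by the computation above), its proof applies verbatim and yields $L^\imath(\lambda,\mu)\simeq\bigoplus_s V(\nu_s)\simeq V^\imath(\lambda,\mu)$, forcing the surjection $\pi$ to be an isomorphism.
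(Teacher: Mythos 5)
Your proof is correct and follows the same overall skeleton as the paper's: build the $\mathbf{U}$-module surjection $V^\imath(\lambda,\mu)\twoheadrightarrow L^\imath(\lambda,\mu)$ from the presentation in Lemma \ref{lem: def rel of Vi as Umod}, identify $\mathbf{L}v^\imath_{\lambda,\mu}\simeq\mathbf{L}(v_{w_\bullet\lambda}\otimes v_\mu)$ by playing the two maps against each other on the finite-dimensional module $V_\mathbf{L}(-w_\bullet\lambda,\mu)$, and conclude because both $\mathbf{U}$-modules are determined by their $\mathbf{L}$-socle. The one genuine divergence is at the last step: for the claim that the $\mathbf{U}$-module structure of $L^\imath(\lambda,\mu)$ is determined by the $\mathbf{L}$-module $\mathbf{L}(v_{w_\bullet\lambda}\otimes v_\mu)$, the paper invokes the external result \cite[Proposition 3.4.3]{Wat23b}, whereas you observe that the proof of Lemma \ref{lem: Umod str is determined by Lmod str} only uses integrability, cyclicity, and $\mathbf{U}^+v=\mathbf{L}^+v$, all of which hold for $L^\imath(\lambda,\mu)$ once $\mathbf{R}^+v=0$ is known, so the same argument runs verbatim. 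This makes your version self-contained, at the modest cost of having to spell out $\mathbf{R}^+v=0$ (which the paper dismisses as ``easily verified''); your height induction using $E_k\mathbf{L}^-=\mathbf{L}^-E_k$, the Levi decomposition $\mathbf{U}^+=\mathbf{L}^+\oplus\mathbf{R}^+$, and $\Delta(\mathbf{L})\subseteq\mathbf{L}\otimes\mathbf{L}$ does this correctly.
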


\begin{proof}
  It is easily verified that $\mathbf{R}^+ (v_{w_\bullet \lambda} \otimes v_\mu) = 0$ and
  \[
    E_j^{(\langle h_j, -w_\bullet \lambda \rangle + 1)} (v_{w_\bullet \lambda} \otimes v_\mu) = F_j^{(\langle h_j, \mu \rangle + 1)} (v_{w_\bullet \lambda} \otimes v_\mu) = F_k^{(\langle h_k, w_\bullet \lambda + \mu \rangle + 1)} (v_{w_\bullet \lambda} \otimes v_\mu) = 0
  \]
  for all $j \in I_\bullet$, $k \in I_\circ$.
  Hence, Lemma \ref{lem: def rel of Vi as Umod} implies that there exists a $\mathbf{U}$-module homomorphism
  \[
    V^\imath(\lambda, \mu) \rightarrow L^\imath(\lambda,\mu)
  \]
  which sends $v^\imath_{\lambda, \mu}$ to $v_{w_\bullet \lambda} \otimes v_\mu$.

  On the other hand, since $\mathbf{L}(v_{w_\bullet \lambda} \otimes v_\mu) \simeq V_\mathbf{L}(-w_\bullet \lambda, \mu)$, there exists an $\mathbf{L}$-module homomorphism
  \[
    \mathbf{L}(v_{w_\bullet \lambda} \otimes v_\mu) \rightarrow \mathbf{L} v^\imath_{\lambda, \mu}
  \]
  which sends $v_{w_\bullet \lambda} \otimes v_\mu$ to $v^\imath_{\lambda, \mu}$.

  Combining the two homomorphisms above, we see that $\mathbf{L} v^\imath_{\lambda, \mu} \simeq \mathbf{L}(v_{w_\bullet \lambda} \otimes v_\mu)$.
  By Lemma \ref{lem: Umod str is determined by Lmod str}, the $\mathbf{U}$-module structure of $V^\imath(\lambda, \mu)$ is determined by the $\mathbf{L}$-module structure of $\mathbf{L} v^\imath_{\lambda, \mu}$.
  On the other hand, by \cite[Proposition 3.4.3]{Wat23b}, the $\mathbf{U}$-module structure of $L^\imath(\lambda,\mu)$ is determined by the $\mathbf{L}$-module structure of $\mathbf{L}(v_{w_\bullet \lambda} \otimes v_\mu)$.
  Hence, the assertion follows.
\end{proof}

\begin{defi}\label{def: integrable Ui mod}
  A weight $\mathbf{U}^\imath$-module $V$ is said to be \emph{integrable} if for each weight vector $v \in V$, there exist $\lambda, \mu \in X^+$ and a $\mathbf{U}^\imath$-module homomorphism $V^\imath(\lambda, \mu) \rightarrow V$ which sends $v^\imath_{\lambda, \mu}$ to $v$.
\end{defi}

As explained in Section \ref{sec: intro}, the $\mathbf{U}$-modules $L^\imath(\lambda, \mu)$ in Proposition \ref{prop: Vi simeq Li}, which are isomorphic to $V^\imath(\lambda, \mu)$, are counterparts of $V(\lambda,\mu)$ in the theory of $\imath$canonical bases \cite{BaWa18}, \cite{BaWa21}, and the latter can be used to define integrable $\mathbf{U}$-modules.
From this point of view, Definition \ref{def: integrable Ui mod} is quite natural.

\section{Properties of integrable modules}\label{sec: int ui mod}
Although Definition \ref{def: integrable Ui mod} is a natural generalization of the definition of integrable $\mathbf{U}$-modules, it is not quite useful to determine whether a given $\mathbf{U}^\imath$-module is integrable or not.
This is because we only know presentations of the $V^\imath(\lambda,\mu)$'s as $\mathbf{U}$-modules, but not as $\mathbf{U}^\imath$-modules.
In this section, we shall give presentations as $\mathbf{U}^\imath$-modules.

\subsection{Some elements $\mathfrak{B}_{k,\zeta}^{(n)}$}
The aim of this subsection is to introduce a family $\{ \mathfrak{B}_{k,\zeta}^{(n)} \mid k \in I_\circ,\ \zeta \in X^\imath,\ n \in \mathbb{Z}_{\geq 0} \}$ of elements in $\dot{\mathbf{U}}^\imath$, which will be used to express the $\mathbf{U}^\imath$-modules $V^\imath(\lambda,\mu)$ in terms of generators and relations.

\subsubsection{Case $1:$ $\tau(k) = k = w_\bullet k$}
First, let us consider the case where $\tau(k) = k = w_\bullet k$.
For each $\zeta \in X^\imath$ and $\lambda \in X$ such that $\bar{\lambda} = \zeta$, the parity of integer $\langle h_k, \lambda \rangle$ is independent of the choice of $\lambda$.
We call it the parity of $\zeta$ at $k$.
Set $p_k(\zeta) \in \mathbb{Z}/2\mathbb{Z} = \{ \bar{0}, \bar{1} \}$ to be $\bar{0}$ if the parity of $\zeta$ at $k$ is even, and to be $\bar{1}$ otherwise.
Similarly, for each integer $n$, set $p(n)$ to be $\bar{0}$ if $n$ is even, and to be $\bar{1}$ otherwise.

For each $\zeta \in X^\imath$ and $n \geq 0$, define the element $\mathfrak{B}_{k, \zeta}^{(n)} \in \dot{\mathbf{U}}^\imath 1_\zeta$ inductively as follows:
\begin{itemize}
  \item $\mathfrak{B}_{k, \zeta}^{(0)} := 1_\zeta$.
  \item If $p_k(\zeta) = \bar{0}$, then $\mathfrak{B}_{k,\zeta}^{(1)} := (B_k - \kappa_k)1_\zeta$.
  \item If $p_k(\zeta) = p(n)$ and $n \geq 1$, then $\mathfrak{B}_{k,\zeta}^{(n)} := \frac{1}{[n]_k} B_k \mathfrak{B}_{k,\zeta}^{(n-1)}$.
  \item If $p_k(\zeta) \neq p(n)$ and $n \geq 2$, then
  \[
    \mathfrak{B}_{k, \zeta}^{(n)} := \frac{1}{[n]_k[n-1]_k}(B_k^2 - (q_k^{n-1} + q_k^{-n+1})\kappa_k B_k + (\kappa_k^2 - [n-1]_k^2 q_k \varsigma_k)) \mathfrak{B}_{k, \zeta}^{(n-2)}.
  \]
\end{itemize}

Let us see where this definition comes from.
Let $\mathbb{K}$ be an algebraic closure of the field $\mathbb{Q}(q)$, and consider the following quadratic equations
\begin{align}
  x^2 - (q_k + q_k^{-1}) a x + a^2 - b = 0 \tag*{$(\mathrm{QE})_{a,b}$} \label{eq: quadratic equation}
\end{align}
with variable $x$ and constants $a,b \in \mathbb{K}$.
Since this equation is symmetric with respect to $x$ and $a$, it follows that if $c \in \mathbb{K}$ is a solution to $(\mathrm{QE})_{a,b}$, then $a$ is a solution to $(\mathrm{QE})_{c,b}$.

Given scalars $a^{(0)}, b \in \mathbb{K}$, define a family $(a^{(n)})_{n \in \mathbb{Z}} \in \mathbb{K}^\mathbb{Z}$ of scalars as follows:
The $a^{(1)}$ and $a^{(-1)}$ are the solutions to the quadratic equation $(\mathrm{QE})_{a^{(0)},b}$; the ambiguity will not matter in the subsequent argument.
Suppose that we have defined $a^{(m)}$ for $-n \leq m \leq n$ for some $n \geq 1$.
Then, set $a^{(\pm (n+1))} := (q_k + q_k^{-1}) a^{(\pm n)} - a^{(\pm (n-1))}$.

By the construction above, it is clear that the $a^{(n+1)}$ and $a^{(n-1)}$ are the solutions to the quadratic equation $(\mathrm{QE})_{a^{(n)}, b}$ for each $n \in \mathbb{Z}$.
In particular, we have
\begin{align*}
  &a^{(n+1)} + a^{(n-1)} = (q_k + q_k^{-1}) a^{(n)}, \\
  &a^{(n+1)} a^{(n-1)} = (a^{(n)})^2 - b.
\end{align*}
Then, one can straightforwardly verify that for each $n \geq 1$, the $a^{(n)}$ and $a^{(-n)}$ are the solutions to the quadratic equation
\[
  x^2 - (q_k^n + q_k^{-n}) a^{(0)} x + (a^{(0)})^2 - [n]_k^2 b = 0.
\]

Now, let us consider the $(n+1)$-dimensional irreducible representation $\mathbf{V}_n$ of $U_q(\mathfrak{sl}_2)$, which we identify with the subalgebra of $\mathbf{U}$ generated by $E_k$, $F_k$, and $K_k^{\pm 1}$.
By a similar argument to \cite[\S 3.1]{Wat21b}, one can show that the $B_k \in \mathbf{U}^\imath$ acts on $\mathbf{V}_n$ semisimply with eigenvalues $\kappa^{(n)}, \kappa^{(n-2)}, \dots, \kappa^{(-n)}$, where we set $\kappa^{(0)} := \kappa_k$ and $b := q_k \varsigma_k$.
Therefore, we see that
\[
  \mathfrak{B}_{k,\zeta}^{(n+1)} = \frac{1}{[n]_k!}P_n(B_k) 1_\zeta \ \text{ for all } \zeta \in X^\imath \text{ with } p_k(\zeta) = p(n),
\]
where $P_n(x)$ denotes the minimal polynomial of the action of $B_k$ on $\mathbf{V}_n$.

The following are straightforward consequences of the argument above.

\begin{prop}\label{prop: frB on wt vec; type I}
  Let $V$ be a weight $\mathbf{U}$-module, $\lambda \in X$, $v \in V_\lambda$.
  Assume that $n := \langle h_k, \lambda \rangle \geq 0$ and $E_k \mathbf{L}^+ v = 0$.
  Then, we have
  \[
    \mathfrak{B}_{k, \bar{\lambda}}^{(n+1)} v = F_k^{(n+1)} v.
  \]
\end{prop}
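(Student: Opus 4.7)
The plan is to reduce the statement to an $\mathfrak{sl}_2$-type computation inside the cyclic subspace $V' := \sum_{m \ge 0} \mathbb{Q}(q) F_k^{(m)} v \subseteq V$. The preliminary observation is that under the Case~1 hypothesis ($w_\bullet k = k$) the element $T_{w_\bullet}(E_k)$ equals $E_k$: it lies in $\mathbf{U}^+$ and is homogeneous of weight $w_\bullet \alpha_k = \alpha_k$, hence belongs to the one-dimensional weight space $\mathbb{Q}(q) E_k$, and a direct check using the braid relations pins the scalar down to $1$. Consequently $B_k = F_k + \varsigma_k E_k K_k^{-1} + \kappa_k K_k^{-1}$, and the hypothesis $E_k \mathbf{L}^+ v = 0$ in particular yields $E_k v = 0$, so $B_k v = F_k v + \kappa_k q_k^{-n} v$.

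Since $E_k v = 0$ and $K_k v = q_k^n v$, the subspace $V'$ is a highest-weight module of weight $n$ over the subalgebra $\langle E_k, F_k, K_k^{\pm 1}\rangle \simeq U_q(\mathfrak{sl}_2)$, hence a quotient of the corresponding Verma module. Each of the three summands of $B_k$ preserves $V'$, so $B_k$ does too, acting tridiagonally via $F_k F_k^{(m)} v = [m+1]_k F_k^{(m+1)} v$, $E_k F_k^{(m)} v = [n-m+1]_k F_k^{(m-1)} v$, and $K_k^{-1} F_k^{(m)} v = q_k^{-n+2m} F_k^{(m)} v$. Moreover, from the minimal-polynomial discussion preceding the proposition together with an induction on $n$ through the three-case recursion defining $\mathfrak{B}^{(\bullet)}$, one identifies $\mathfrak{B}^{(n+1)}_{k,\bar\lambda}$ with the normalized product $\frac{1}{[n+1]_k!} \prod_{m \in \{-n, -n+2, \ldots, n\}}(B_k - \kappa^{(m)}) 1_{\bar\lambda}$.

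Passing to the quotient $V'/F_k^{(n+1)} V'$, which is isomorphic to $\mathbf{V}_n$ because $E_k F_k^{(n+1)} v = [0]_k F_k^{(n)} v = 0$ makes $F_k^{(n+1)} V'$ a highest-weight submodule of weight $-n-2$, the operator $B_k$ acts semisimply with eigenvalues $\kappa^{(n)}, \kappa^{(n-2)}, \ldots, \kappa^{(-n)}$. Hence $\prod_m (B_k - \kappa^{(m)}) v$ lies in $F_k^{(n+1)} V'$; since it has weight $\lambda - (n+1)\alpha_k$ while the corresponding weight space of $F_k^{(n+1)} V'$ is spanned by $F_k^{(n+1)} v$, it must equal $c F_k^{(n+1)} v$ for some $c \in \mathbb{Q}(q)$. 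A leading-order analysis fixes $c$: the degree-$(n+1)$ term $B_k^{n+1} v$ contributes $F_k^{n+1} v = [n+1]_k! F_k^{(n+1)} v$ in the top $F_k$-degree, while lower polynomial powers of $B_k$ applied to $v$ land in $\sum_{m \le n} \mathbb{Q}(q) F_k^{(m)} v$ which sits in different weight spaces; so $c = [n+1]_k!$, and dividing by $[n+1]_k!$ yields $\mathfrak{B}^{(n+1)}_{k, \bar\lambda} v = F_k^{(n+1)} v$. The most delicate point is the preliminary identification of $\mathfrak{B}^{(n+1)}$ with the normalized product, which requires careful bookkeeping across the parity-dependent branches of the recursion against the minimal-polynomial formulation; but this is routine once the pattern is recognized.
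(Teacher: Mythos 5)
Your proof is correct and fills in exactly the argument the paper leaves implicit (the paper derives this proposition as a ``straightforward consequence'' of the preceding discussion, namely the semisimple action of $B_k$ on $\mathbf{V}_n$ with eigenvalues $\kappa^{(n)},\kappa^{(n-2)},\dots,\kappa^{(-n)}$ together with the identification of $\mathfrak{B}^{(n+1)}_{k,\zeta}$ with the normalized minimal polynomial of $B_k$ evaluated at $B_k$): one restricts to the $U_q(\mathfrak{sl}_2)_k$-string through $v$, passes to the quotient by the submodule generated by $F_k^{(n+1)}v$ to land in $\mathbf{V}_n$, and extracts the top $F_k$-degree coefficient via the tridiagonal action. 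Two small remarks: your normalization $\frac{1}{[n+1]_k!}\prod_{m}(B_k-\kappa^{(m)})1_{\bar\lambda}$ is the correct unwinding of the recursion (the paper's displayed $\frac{1}{[n]_k!}P_n(B_k)1_\zeta$ is off by a factor of $[n+1]_k$, as the case $n=1$ already shows), and the intermediate claim that $\prod_m(B_k-\kappa^{(m)})v$ has weight $\lambda-(n+1)\alpha_k$ is not true a priori since $B_k$ is not $X$-homogeneous, but your closing leading-order argument (top term $[n+1]_k!F_k^{(n+1)}v$ from $B_k^{n+1}$, all other contributions in $\sum_{m\le n}\mathbb{Q}(q)F_k^{(m)}v$, intersected with the submodule $\sum_{m\ge n+1}\mathbb{Q}(q)F_k^{(m)}v$) renders that claim unnecessary.
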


\begin{prop}\label{prop: frB on int Umod; type I}
  Let $V$ be an integrable $\mathbf{U}$-module, $\lambda \in X$, and $v \in V_\lambda$.
  Then, there exists $N \in \mathbb{Z}_{\geq 0}$ such that $\mathfrak{B}_{k, \bar{\lambda}}^{(n+1)} v = 0$ for all $n \geq N$.
\end{prop}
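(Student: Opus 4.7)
My plan is to reduce to the case of a finite-dimensional simple module, prove the vanishing for the highest weight vector via Proposition \ref{prop: frB on wt vec; type I}, and then extend to arbitrary weight vectors by descending induction on the weight. Since integrable $\mathbf{U}$-modules are semisimple with simple summands $V(\nu)$ for $\nu \in X^+$, and the desired conclusion is linear in $v$, we may assume $V = V(\mu)$ for some $\mu \in X^+$; in particular $V$ is finite-dimensional and $v \in V(\mu)_\lambda$ for some $\lambda \le \mu$.

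For the base case $v = v_\mu$, the $\mathbf{U}$-highest weight property gives $E_j v_\mu = 0$ for every $j \in I_\bullet$, so $\mathbf{L}^+ v_\mu = \mathbb{Q}(q) v_\mu$ and hence $E_k \mathbf{L}^+ v_\mu = \mathbb{Q}(q) E_k v_\mu = 0$. Proposition \ref{prop: frB on wt vec; type I} therefore applies with $n_0 := \langle h_k, \mu \rangle \ge 0$ and yields
\[
  \mathfrak{B}_{k,\bar\mu}^{(n_0+1)} v_\mu = F_k^{(n_0+1)} v_\mu = 0,
\]
the last equality because $V(\mu)$ is integrable. Inspecting the recursions defining the $\mathfrak{B}_{k,\zeta}^{(n)}$, each $\mathfrak{B}_{k,\bar\mu}^{(n+1)}$ equals a polynomial in $B_k$ applied to either $\mathfrak{B}_{k,\bar\mu}^{(n)}$ or $\mathfrak{B}_{k,\bar\mu}^{(n-1)}$ depending on whether $p_k(\bar\mu) = p(n+1)$ or not. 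An easy induction on $n \ge n_0$ then propagates the vanishing to $\mathfrak{B}_{k,\bar\mu}^{(n+1)} v_\mu = 0$ for every $n \ge n_0$.

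For the extension to an arbitrary $v \in V(\mu)_\lambda$, I would argue by descending induction on $\lambda$. Since $V(\mu) = \mathbf{U}^- v_\mu$, for $\lambda < \mu$ we may write $v = \sum_i F_i w_i$ with $w_i \in V(\mu)_{\lambda + \alpha_i}$, and the induction hypothesis supplies bounds beyond which each $\mathfrak{B}_{k,\overline{\lambda+\alpha_i}}^{(n+1)} w_i$ vanishes; the inductive step becomes: show $\mathfrak{B}_{k,\bar\lambda}^{(n+1)} F_i w_i = 0$ for $n$ sufficiently large. The main obstacle is precisely this commutation step, since $B_k = F_k + \varsigma_k T_{w_\bullet}(E_k) K_k^{-1} + \kappa_k K_k^{-1}$ does not commute in any simple way with any Chevalley generator $F_i$. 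A plausible route is to expand $\mathfrak{B}_{k,\bar\lambda}^{(n+1)} F_i$ in $\dot{\mathbf{U}}$ using the triangular decomposition and collect terms of the form $F_i^{(a)} \cdot Q(B_k) \cdot 1_{\overline{\lambda+\alpha_i}}$, where $Q$ is a polynomial closely related to $\mathfrak{B}_{k,\overline{\lambda+\alpha_i}}^{(n'+1)}$ for some $n' \le n$, so that the induction hypothesis applied to $w_i$ kills each piece once $n$ is chosen sufficiently large. The uniform bound on $n$ follows because $V(\mu)$ has only finitely many weights, so the depth of $v$ below $\mu$ is bounded.
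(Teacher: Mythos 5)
Your proposal has two genuine problems. First, the opening reduction is not valid in the generality in which the proposition is stated: at this point of the paper the Cartan datum is \emph{not} assumed to be of finite type, and integrable weight modules over a quantum group of Kac--Moody type need not be semisimple, nor direct sums of the $V(\nu)$ (for instance ${}^\omega V(\lambda)\otimes V(\mu)$, which is exactly the kind of module the proposition must handle, is integrable but in general is neither a sum of highest weight modules nor completely reducible). So you cannot assume $V=V(\mu)$. Second, and more seriously, the inductive step that you yourself identify as ``the main obstacle'' is left unproved: you only sketch ``a plausible route'' for commuting $\mathfrak{B}_{k,\bar\lambda}^{(n+1)}$ past $F_i$. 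This is not a routine computation. Even for $i\neq k$ the three summands of $B_k=F_k+\varsigma_k E_kK_k^{-1}+\kappa_k K_k^{-1}$ pick up different powers of $q$ when moved past $F_i$, so $\mathfrak{B}_{k,\bar\lambda}^{(n+1)}F_i$ is not $F_i$ times a polynomial in $B_k$ of the required shape (the coefficients in the defining recursion depend on the weight $\zeta$), and for $i=k$ the situation is worse. As written, the proof is incomplete at its central step.

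The intended argument is much shorter and avoids both issues. In Case 1 one has $w_\bullet\alpha_k=\alpha_k$, so $T_{w_\bullet}(E_{\tau(k)})=E_k$ and $B_k$ lies in the subalgebra $U_q(\mathfrak{sl}_2)_k$ generated by $E_k$, $F_k$, $K_k^{\pm1}$. An integrable $\mathbf{U}$-module restricted to $U_q(\mathfrak{sl}_2)_k$ is an integrable $U_q(\mathfrak{sl}_2)$-module, hence decomposes as a direct sum of the simple modules $\mathbf{V}_m$, and the weight vector $v$ lies in finitely many of them, say all with $m\leq M$ and $m\equiv\langle h_k,\lambda\rangle \pmod 2$. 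The discussion preceding the proposition shows that $\mathfrak{B}_{k,\bar\lambda}^{(n+1)}=\frac{1}{[n]_k!}P_n(B_k)1_{\bar\lambda}$ when $p(n)=p_k(\bar\lambda)$, where $P_n$ is the minimal polynomial of the (semisimple) action of $B_k$ on $\mathbf{V}_n$; since the eigenvalues $\kappa^{(m)},\kappa^{(m-2)},\dots,\kappa^{(-m)}$ on $\mathbf{V}_m$ are among those on $\mathbf{V}_n$ for $m\leq n$ of the same parity, $P_n(B_k)$ kills $v$ for all such $n\geq M$; the remaining parity is handled by one application of the one-step recursion. No reduction to simple $\mathbf{U}$-modules and no commutation with the $F_i$ is needed. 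I recommend reworking the proof along these lines; your base-case observation (propagating vanishing up the recursion once $\mathfrak{B}_{k,\bar\mu}^{(n_0+1)}v_\mu=0$) is correct but is subsumed by this argument.
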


\subsubsection{Case $2:$ $\tau(k) = k \neq w_\bullet k$}
Next, let us consider the case where $\tau(k) = k \neq w_\bullet k$.
For each $\zeta \in X^\imath$ and $n \geq 0$, set
\[
  \mathfrak{B}_{k, \zeta}^{(n)} := \frac{1}{[n]_k!} B_k^n 1_\zeta.
\]

Set $Y_k := B_k - F_k$ and $Z_k := F_k Y_k - q_k^{-2} Y_k F_k$.
Then, the following hold ({\it cf}.\ \cite[\S 5.2]{BaWa21}):
\begin{itemize}
  \item $Z_k$ commutes with both $F_k$ and $Y_k$.
  \item $Z_k \in \mathbf{L}^+_{w_\bullet(\alpha_k) - \alpha_k}$.
  \item $Y_k \in \mathbf{L}^+ E_k \mathbf{L}^+$.
\end{itemize}
Hence, for each $n \geq 0$, we have
\begin{align}\label{eq: expansion Bk^n}
  B_k^n = \sum_{f + 2z + y = n} a_{f,z,y} F_k^f Z_k^z Y_k^y  
\end{align}
for some $a_{f,z,y} \in \mathbb{Q}(q)$.

\begin{prop}\label{prop: frB on wt vec; type II}
  Let $V$ be a weight $\mathbf{U}$-module, $\lambda \in X$, $v \in V_\lambda$.
  Assume that $n := \langle h_k, \lambda \rangle \geq 0$ and $E_k \mathbf{L}^+ v = 0$.
  Then, we have
  \[
    \mathfrak{B}_{k, \bar{\lambda}}^{(n+1)} v = \sum_{f+2z = n+1} a_{f,z} F_k^{(f)} Z_k^z v
  \]
  for some $a_{f,z} \in \mathbb{Q}(q)$ such that $a_{n+1,0} = 1$.
\end{prop}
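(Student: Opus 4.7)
My strategy is to use the given expansion \eqref{eq: expansion Bk^n} of $B_k^{n+1}$ together with the hypothesis $E_k \mathbf{L}^+ v = 0$ to discard all summands that contain a factor of $Y_k$, and then to compute the leading coefficient directly from $B_k = F_k + Y_k$.

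The key preliminary step is to show that $Y_k v = 0$. Since $Y_k \in \mathbf{L}^+ E_k \mathbf{L}^+$, I would write $Y_k = \sum_i u_i E_k u_i'$ with $u_i, u_i' \in \mathbf{L}^+$; for each $i$ we have $u_i' v \in \mathbf{L}^+ v$, so $E_k u_i' v \in E_k \mathbf{L}^+ v = 0$, and thus $Y_k v = 0$. Because $Y_k$ commutes with itself, this immediately gives $Y_k^y v = Y_k^{y-1}(Y_k v) = 0$ for every $y \geq 1$. Applying \eqref{eq: expansion Bk^n} to $v$ with $n$ replaced by $n+1$, only the $y = 0$ summands survive, so
\[
  B_k^{n+1} v = \sum_{f + 2z = n+1} a_{f,z,0}\, F_k^f Z_k^z v.
\]
Dividing by $[n+1]_k!$ and writing $F_k^f = [f]_k!\, F_k^{(f)}$ yields the displayed formula with $a_{f,z} := [f]_k!\, a_{f,z,0}/[n+1]_k!$.

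It remains to verify $a_{n+1,0} = a_{n+1,0,0} = 1$. To this end I would expand $B_k^{n+1} = (F_k + Y_k)^{n+1}$ as a sum of the $2^{n+1}$ ordered words of length $n+1$ in the letters $F_k$ and $Y_k$, and bring each such word into the normal form $F_k^f Z_k^z Y_k^y$ using the commutation $F_k Y_k = Z_k + q_k^{-2} Y_k F_k$ (together with the fact that $Z_k$ commutes with $F_k$ and $Y_k$). Each elementary rewrite either swaps an adjacent $F_k$ and $Y_k$ (preserving the number of $F_k$'s) or replaces a pair $F_k Y_k$ by a single $Z_k$ (strictly decreasing the number of $F_k$'s). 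Hence a word containing $f_0$ letters $F_k$ cannot contribute to the normal-form monomial $F_k^{n+1}$ unless $f_0 = n+1$, i.e., unless the word is $F_k^{n+1}$ itself, which contributes that monomial with coefficient $1$. Therefore $a_{n+1,0,0} = 1$, as required. The main technical point is this $F_k$-counting argument for the leading coefficient, but it is routine given the explicit commutation relations in the paper.
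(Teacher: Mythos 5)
Your proof is correct and takes essentially the same route as the paper, whose proof simply declares the assertion immediate from the expansion \eqref{eq: expansion Bk^n}: you supply the details behind that, namely that $Y_k \in \mathbf{L}^+ E_k \mathbf{L}^+$ together with $E_k \mathbf{L}^+ v = 0$ kills every summand with $y \geq 1$, and the normal-ordering/word-counting argument correctly identifies the leading coefficient $a_{n+1,0} = 1$.
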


\begin{proof}
  The assertion is immediate from equation \eqref{eq: expansion Bk^n}.
\end{proof}

\begin{prop}\label{prop: frB on int Umod; type II}
  Let $V$ be an integrable $\mathbf{U}$-module, $\lambda \in X$, and $v \in V_\lambda$.
  Then, there exists $N \in \mathbb{Z}_{\geq 0}$ such that $\mathfrak{B}_{k, \bar{\lambda}}^{(n+1)} v = 0$ for all $n \geq N$.
\end{prop}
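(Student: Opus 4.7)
The plan is to reduce the claim to the local nilpotency on $V$ of each of the three operators $F_k$, $Y_k$, $Z_k$, and then to exploit the expansion \eqref{eq: expansion Bk^n} together with the commutativity relations $[Z_k, F_k] = [Z_k, Y_k] = 0$ recorded in the excerpt. A preliminary observation is that Case 2 forces $\kappa_k = 0$: indeed, $w_\bullet k \neq k$ implies that $a_{k,j} \neq 0$ for some $j \in I_\bullet$, so the parameter condition on $\kappa_k$ collapses and we obtain $Y_k = \varsigma_k T_{w_\bullet}(E_k) K_k^{-1}$, considerably simplifying the analysis.

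The three local nilpotencies are then established as follows. The operator $F_k$ is locally nilpotent on $V$ by the very definition of integrability. For $Y_k$, integrability ensures that Lusztig's braid operator $T_{w_\bullet}$ is a well-defined invertible $\mathbb{Q}(q)$-linear map on $V$ (\cite[\S 5.2]{Lus93}); the identity
\[
  T_{w_\bullet}(E_k)^n \cdot v = T_{w_\bullet}\bigl(E_k^n \cdot T_{w_\bullet}^{-1}(v)\bigr)
\]
combined with the local nilpotency of $E_k$ shows that $T_{w_\bullet}(E_k)$, and hence $Y_k$, acts locally nilpotently. For $Z_k$, we have $Z_k \in \mathbf{L}^+_{w_\bullet \alpha_k - \alpha_k}$ with weight a nonzero element of $\sum_{j \in I_\bullet} \mathbb{Z}_{\geq 0} \alpha_j$ (nonzero because $w_\bullet k \neq k$). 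Since $I_\bullet$ is of finite type and $V|_\mathbf{L}$ is integrable, $V|_\mathbf{L}$ decomposes as a direct sum of finite-dimensional simple $\mathbf{L}$-modules, and any element of $\mathbf{L}^+$ of nonzero weight acts nilpotently on each finite-dimensional summand; thus $Z_k$ is locally nilpotent on $V$.

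To assemble the three bounds, for a weight vector $v$ I would choose $n_1$ with $Y_k^{n_1} v = 0$; for each $y < n_1$ pick $m_y$ with $Z_k^{m_y}(Y_k^y v) = 0$ and set $n_3 := \max_{y < n_1} m_y$; for each pair $(y, z)$ with $y < n_1, z < n_3$ pick $N_{y,z}$ with $F_k^{N_{y,z}}(Z_k^z Y_k^y v) = 0$ and set $N := \max N_{y,z}$. Then for any triple $(f, z, y)$ with $f + 2z + y \geq N + n_1 + 2 n_3$ at least one of the three bounds must be violated, and the commutativity $Z_k Y_k = Y_k Z_k$ lets us rewrite $Z_k^z Y_k^y v = Y_k^y Z_k^z v$ to handle the intermediate case; in every case, $F_k^f Z_k^z Y_k^y v = 0$. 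The expansion \eqref{eq: expansion Bk^n} therefore yields $B_k^n v = 0$ for all $n \geq N + n_1 + 2 n_3$, and dividing by $[n]_k!$ gives the desired vanishing of $\mathfrak{B}_{k, \bar{\lambda}}^{(n+1)} v$. I expect the main obstacle to be the local nilpotency of $Z_k$, which is the step that genuinely uses the finite-type hypothesis on $I_\bullet$ via the complete reducibility of integrable modules over a finite-type quantum group.
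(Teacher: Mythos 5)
Your proof is correct and follows essentially the same route as the paper: expand $B_k^n$ via \eqref{eq: expansion Bk^n}, establish local nilpotency of $F_k$, $Y_k$, and $Z_k$ separately, and combine the three bounds into a single threshold for $f+2z+y$. The only difference is that you spell out justifications the paper leaves implicit (the braid-operator argument for $Y_k$, the observation that $\kappa_k=0$ in this case, and complete reducibility of $V|_{\mathbf{L}}$ for $Z_k$, where the paper instead invokes finite-dimensionality of the $\mathbf{L}^+$-submodule generated by the $Y_k^y v$), and these additions are all sound.
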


\begin{proof}
  Since $V$ is integrable, the element $Y_k$, which is a scalar multiple of $T_{w_\bullet}(E_{\tau(k)}) K_k^{-1}$, acts on $V$ locally nilpotently.
  In particular, there exists $y_0 \geq 0$ such that
  \[
    Y_k^{y} v = 0 \ \text{ for all } y > y_0.
  \]

  Recall that the algebra $\mathbf{L}^+$ is a quantum group of finite type.
  Hence, the $\mathbf{L}^+$-submodule of the integrable module $V$ generated by the finite set $\{ Y_k^y v \mid y \geq 0 \}$ is finite-dimensional.
  Hence, there exists $z_0 \geq 0$ such that
  \[
    Z_k^z Y_k^y v = 0 \ \text{ for all } y \geq 0,\ z > z_0.
  \]

  Since the $F_k$ acts on $V$ locally nilpotently, and the number of vectors of the form $Z_k^z Y_k^y v$ with $z,y \geq 0$ is finite, there exists $f_0 \geq 0$ such that
  \[
    F_k^f Z_k^z Y_k^y v = 0 \ \text{ for all } z,y \geq 0,\ f > f_0.
  \]

  Now, set
  \[
    N := f_0 + 2z_0 + f_0.
  \]
  By equation \eqref{eq: expansion Bk^n}, we have
  \[
    B_k^{N+1} v = \sum_{f+2z+y=N+1} a_{f,z,y} F_k^f Z_k^z Y_k^y v.
  \]
  All the summand in the right-hand side is $0$ by the definition of $N$.
  Hence, the assertion follows.
\end{proof}

\subsubsection{Case $3:$ $\tau(k) \neq k$}
Finally, let us consider the case where $\tau(k) \neq k$.
For each $\zeta \in X^\imath$ and $n > 0$, set
\[
  \mathfrak{B}_{k, \zeta}^{(n)} := \frac{1}{[n]_k!} B_k^n 1_\zeta.
\]

Set $Y_k := B_k - F_k$.
Then, the following hold ({\it cf}.\ \cite[\S 5.5]{BaWa21}):
\begin{itemize}
  \item $F_k Y_k - q_k^{-2} Y_k F_k = 0$.
  \item $Y_k \in \mathbf{L}^+ E_{\tau(k)} \mathbf{L}^+$.
\end{itemize}
Hence, for each $n \geq 0$, we have
\begin{align}\label{eq: expansion Bk^n; type III}
  B_k^n = \sum_{f + y = n} a_{f,y} F_k^f Y_k^y  
\end{align}
for some $a_{f,y} \in \mathbb{Q}(q)$.

\begin{prop}\label{prop: frB on wt vec; type III}
  Let $V$ be a weight $\mathbf{U}$-module, $\lambda \in X$, $v \in V_\lambda$.
  Assume that $n := \langle h_k, \lambda \rangle \geq 0$ and $E_k \mathbf{L}^+ v = E_{\tau(k)} \mathbf{L}^+ v = 0$.
  Then, we have
  \[
    \mathfrak{B}_{k, \bar{\lambda}}^{(n+1)} v = F_k^{(n+1)} v.
  \]
\end{prop}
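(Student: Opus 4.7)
The plan is to reduce $B_k^{n+1} v$ to $F_k^{n+1} v$ by showing that $Y_k$ annihilates $v$, then invoking the expansion \eqref{eq: expansion Bk^n; type III}. Concretely, I would proceed in three short steps.

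First, I would show that $Y_k v = 0$. Since $Y_k \in \mathbf{L}^+ E_{\tau(k)} \mathbf{L}^+$, one can write $Y_k$ as a finite sum of elements of the form $a_i E_{\tau(k)} b_i$ with $a_i, b_i \in \mathbf{L}^+$. Applying this to $v$ yields $\sum_i a_i \, E_{\tau(k)} (b_i v)$; since $b_i v \in \mathbf{L}^+ v$, the hypothesis $E_{\tau(k)} \mathbf{L}^+ v = 0$ forces every summand to vanish. Here the hypothesis $E_{\tau(k)} \mathbf{L}^+ v = 0$ is crucial, whereas $E_k \mathbf{L}^+ v = 0$ is not needed for this argument.

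Next, I would substitute $n$ replaced by $n+1$ in the expansion \eqref{eq: expansion Bk^n; type III}:
\[
  B_k^{n+1} v = \sum_{f + y = n+1} a_{f,y} \, F_k^f Y_k^y v.
\]
Every term with $y \geq 1$ kills $v$, since $Y_k^y v = Y_k^{y-1}(Y_k v) = 0$. Only the term $(f,y) = (n+1, 0)$ survives, so $B_k^{n+1} v = a_{n+1, 0} F_k^{n+1} v$.

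Finally, I would identify $a_{n+1, 0} = 1$. Since $B_k = F_k + Y_k$ and $F_k, Y_k$ $q$-commute (via $F_k Y_k = q_k^{-2} Y_k F_k$), the coefficient of $F_k^{n+1}$ in the expansion of $(F_k + Y_k)^{n+1}$ (after reordering $F_k$'s to the left of $Y_k$'s) arises only from the single term in which $F_k$ is chosen from each of the $n+1$ factors, so it equals $1$. Dividing by $[n+1]_k!$ gives $\mathfrak{B}_{k, \bar{\lambda}}^{(n+1)} v = F_k^{(n+1)} v$, as desired. There is no real obstacle: the argument is formal once one has the decomposition $Y_k \in \mathbf{L}^+ E_{\tau(k)} \mathbf{L}^+$ and the expansion \eqref{eq: expansion Bk^n; type III} in hand.
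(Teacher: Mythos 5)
Your proof is correct and is exactly the argument the paper intends: the paper's own proof simply says the claim is immediate from the expansion \eqref{eq: expansion Bk^n; type III}, and your three steps (killing $Y_k v$ via $Y_k \in \mathbf{L}^+ E_{\tau(k)} \mathbf{L}^+$ and the hypothesis $E_{\tau(k)}\mathbf{L}^+ v = 0$, discarding all terms with $y \geq 1$, and reading off $a_{n+1,0}=1$ from the $q$-commutation of $F_k$ and $Y_k$) are precisely the details being suppressed. Your side remark that $E_k \mathbf{L}^+ v = 0$ is not needed in this case is also accurate.
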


\begin{proof}
  The assertion is immediate from equation \eqref{eq: expansion Bk^n; type III}.
\end{proof}

\begin{prop}\label{prop: frB on int Umod; type III}
  Let $V$ be an integrable $\mathbf{U}$-module, $\lambda \in X$, and $v \in V_\lambda$.
  Then, there exists $N \in \mathbb{Z}_{\geq 0}$ such that $\mathfrak{B}_{k, \bar{\lambda}}^{(n+1)} v = 0$ for all $n \geq N$.
\end{prop}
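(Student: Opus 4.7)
The plan is to follow the pattern of Proposition~\ref{prop: frB on int Umod; type II}, exploiting the expansion \eqref{eq: expansion Bk^n; type III}. The argument is in fact slightly simpler in case~3 because no auxiliary element $Z_k$ intervenes, so it suffices to produce an integer $N$ such that $F_k^f Y_k^y v = 0$ whenever $f + y \geq N + 1$; this will force $B_k^{n+1} v = 0$, and hence $\mathfrak{B}_{k, \bar{\lambda}}^{(n+1)} v = 0$, for every $n \geq N$.

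A convenient first step is to iterate the $q$-commutation $F_k Y_k = q_k^{-2} Y_k F_k$ stated just above \eqref{eq: expansion Bk^n; type III}, yielding $F_k^f Y_k^y = q_k^{-2fy} Y_k^y F_k^f$. Thus, up to a nonzero scalar, every summand in the expansion of $B_k^{n+1} v$ takes the form $Y_k^y F_k^f v$. Since $V$ is integrable, $F_k$ acts locally nilpotently on $v$, and I fix $f_0 \geq 0$ with $F_k^f v = 0$ for every $f > f_0$. Next, the parameter constraint combined with the hypothesis $\tau(k) \neq k$ forces $\kappa_k = 0$, whence $Y_k = \varsigma_k T_{w_\bullet}(E_{\tau(k)}) K_k^{-1}$. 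On the integrable module $V$ the braid operator $T_{w_\bullet}$ is a well-defined linear automorphism, and one has $T_{w_\bullet}(E_{\tau(k)})^y = T_{w_\bullet} \circ E_{\tau(k)}^y \circ T_{w_\bullet}^{-1}$ as operators on $V$; applying local nilpotency of $E_{\tau(k)}$ to each of the finitely many vectors $T_{w_\bullet}^{-1} F_k^f v$ with $0 \leq f \leq f_0$ then produces $y_0 \geq 0$ such that $Y_k^y F_k^f v = 0$ for all $0 \leq f \leq f_0$ and $y > y_0$.

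Setting $N := f_0 + y_0$ then finishes the argument: for any $n \geq N$ and any pair $(f, y)$ with $f + y = n + 1$, at least one of $f > f_0$ or $y > y_0$ must hold (otherwise $f + y \leq f_0 + y_0 < n + 1$), so that every summand $F_k^f Y_k^y v$ in $B_k^{n+1} v$ vanishes after the $q$-commutation rewrite. I do not expect any substantive obstacle; the only genuinely nontrivial inputs are the well-definedness of $T_{w_\bullet}$ on an arbitrary integrable $\mathbf{U}$-module and the compatibility $T_{w_\bullet}(E_{\tau(k)})^y = T_{w_\bullet} \circ E_{\tau(k)}^y \circ T_{w_\bullet}^{-1}$, both of which are standard Lusztig-type facts and are already tacitly used in the proof of Proposition~\ref{prop: frB on int Umod; type II}.
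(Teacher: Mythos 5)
Your proof is correct and follows essentially the same route as the paper, which simply states that the assertion is proved in the same way as Proposition~\ref{prop: frB on int Umod; type II}: expand $B_k^{n+1}$ via \eqref{eq: expansion Bk^n; type III} and kill each summand using local nilpotency of $F_k$ and of $Y_k$ (the latter coming from the braid-group action on integrable modules, exactly as you argue). Your extra step of $q$-commuting $F_k^f$ past $Y_k^y$ is harmless but not needed --- one can bound the exponents directly on the summands $F_k^f Y_k^y v$ in that order, as the paper does in the type~II case.
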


\begin{proof}
  One can prove the assertion in the same way as Proposition \ref{prop: frB on int Umod; type II}.
\end{proof}

\subsubsection{Summary}
Let us summarize in a unified manner what we have obtained in the preceding argument.

\begin{prop}\label{prop: summary for Bk,zeta}
  Let $V$ be a weight $\mathbf{U}$-module, $\lambda \in X$, $v \in V_\lambda$, and $k \in I_\circ$.
  Assume that $\langle h_k, \lambda \rangle \geq 0$ and $E_k \mathbf{L}^+ v = E_{\tau(k)} \mathbf{L}^+ v = 0$.
  Let $\mathbf{U}_k^-$ denote the subalgebra of $\mathbf{U}$ generated by $F_k$.
  Then, the following hold.
  \begin{enumerate}
    \item\label{item: summary for Bk,zeta 1} $\mathfrak{B}_{k,\zeta}^{(\langle h_k, \lambda \rangle + 1)} v - F_k^{(\langle h_k, \lambda \rangle + 1)} v \in \sum_{b \in \mathbf{B}_\mathbf{L}(-\infty) \setminus \{ 1 \}} \mathbf{U}^-_k F_k^{(\langle h_k, \lambda + \operatorname{wt}(b) \rangle + 1)} b v$, where we understand that $F_k^{(n)} = 0$ if $n < 0$.
    \item\label{item: summary for Bk,zeta 2} $E_k \mathfrak{B}_{k, \zeta}^{(\langle h_k, \lambda \rangle + 1)} v \in \sum_{b \in \mathbf{B}_\mathbf{L}(-\infty) \setminus \{ 1 \}} \mathbf{U} F_k^{(\langle h_k, \lambda + \operatorname{wt}(b) \rangle + 1)} b v$.
    \item\label{item: summary for Bk,zeta 3} $b' \mathfrak{B}_{k, \zeta}^{\langle h_k, \lambda \rangle + 1} v \in \sum_{\substack{b \in \mathbf{B}(-\infty) \\ \operatorname{wt}(b) \geq \operatorname{wt}(b')}} \mathbf{U}^-_k F_k^{(\langle h_k, \lambda + \operatorname{wt}(b) \rangle + 1)} bv$ for all $b' \in \mathbf{B}_\mathbf{L}(-\infty)$.
  \end{enumerate}
\end{prop}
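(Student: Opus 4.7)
The plan is to proceed case by case according to the trichotomy of Subsections 4.1.1--4.1.3 for $k \in I_\circ$: Case 1 ($\tau(k) = k = w_\bullet k$), Case 2 ($\tau(k) = k \neq w_\bullet k$), and Case 3 ($\tau(k) \neq k$). In each case the corresponding ``type'' Proposition (\ref{prop: frB on wt vec; type I}, \ref{prop: frB on wt vec; type II}, or \ref{prop: frB on wt vec; type III}) provides an explicit formula for $\mathfrak{B}_{k, \bar\lambda}^{(n+1)} v$, with $n := \langle h_k, \lambda \rangle$, and all three items will be read off from that formula. The recurring toolkit consists of: the commutativity $[F_k, \mathbf{L}^+] = 0$ (because $[F_k, E_j] = 0$ whenever $k \neq j$, and here $k \in I_\circ$ while $j \in I_\bullet$); the bound $\langle h_k, \operatorname{wt}(b) \rangle \leq 0$ for every $b \in \mathbf{B}_\mathbf{L}(-\infty)$ (since $a_{k,j} \leq 0$ for $k \neq j$); the divided-power identity $F_k^{(a)} F_k^{(b)} = \binom{a+b}{a}_k F_k^{(a+b)}$; and the commutator formula $[E_k, F_k^{(f)}] = F_k^{(f-1)} (q_k^{-(f-1)} K_k - q_k^{f-1} K_k^{-1})/(q_k - q_k^{-1})$.

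In Cases 1 and 3 one has $\mathfrak{B}_{k, \bar\lambda}^{(n+1)} v = F_k^{(n+1)} v$. Then item (1) is trivial; for item (2) the Cartan correction annihilates $v$ because $K_k v = q_k^n v$, and $E_k v = 0$ follows from $E_k \mathbf{L}^+ v = 0$ by taking $1 \in \mathbf{L}^+$, so $E_k F_k^{(n+1)} v = 0$; for item (3) I commute $b' \in \mathbf{L}^+$ past $F_k^{(n+1)}$ and take $b := b'$ in the claimed sum, then factor $F_k^{(n+1)}$ as a scalar multiple of $F_k^{(n - m_{b'})} F_k^{(m_{b'} + 1)}$ with $m_{b'} := \langle h_k, \lambda + \operatorname{wt}(b') \rangle \leq n$. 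Case 2 is the main one: Proposition \ref{prop: frB on wt vec; type II} writes $\mathfrak{B}_{k, \bar\lambda}^{(n+1)} v = \sum_{f + 2z = n+1} a_{f,z} F_k^{(f)} Z_k^z v$ with $a_{n+1, 0} = 1$, and since $Z_k \in \mathbf{L}^+_{w_\bullet\alpha_k - \alpha_k}$ commutes with $F_k$, I expand $Z_k^z$ in the canonical basis $\mathbf{B}_\mathbf{L}(-\infty)$; for $z \geq 1$ the resulting basis elements $b$ have nonzero weight (because $w_\bullet k \neq k$), so $b \neq 1$. Lemma \ref{lem: values in axioms for gSat} gives $c := \langle h_k, w_\bullet\alpha_k - \alpha_k \rangle \leq -2$, and the bound $m_b = n + zc \leq n - 2z = f - 1$ yields item (1). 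For item (2), the commutator formula combined with $E_k Z_k^z v = 0$ (valid since $Z_k^z \in \mathbf{L}^+$) reduces $E_k F_k^{(f)} Z_k^z v$ to $[z(c+2)]_k F_k^{(f-1)} Z_k^z v$, which vanishes when $c = -2$ and, when $c \leq -3$, satisfies $m_b + 1 \leq f - 1$. Item (3) is analogous: commuting $b'$ past $F_k^{(f)}$ and expanding $b' Z_k^z \in \mathbf{L}^+$ in $\mathbf{B}_\mathbf{L}(-\infty) \subseteq \mathbf{B}(-\infty)$ produces basis elements of weight $\operatorname{wt}(b') + z(w_\bullet\alpha_k - \alpha_k) \geq \operatorname{wt}(b')$, to which the same divided-power reduction applies.

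The main technical point I anticipate lies in Case 2, item (2): although the algebraic part $F_k^{(f)} E_k$ acts as zero on $Z_k^z v$ via $E_k \mathbf{L}^+ v = 0$, the Cartan correction arising from commuting $E_k$ past $F_k^{(f)}$ is not automatically zero when $c \leq -3$, so one must verify that the resulting $F_k^{(f-1)}$ factor is still high enough to be divisible by $F_k^{(m_b + 1)}$. The key inequality $m_b + 1 \leq f - 1$ rewrites as $z(c+2) \leq -1$, which holds precisely because $c \leq -3$ (the value $c = -1$ being excluded by axiom \eqref{gSat4}) together with $z \geq 1$.
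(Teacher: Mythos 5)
Your proof takes essentially the same route as the paper's: the paper's own argument is exactly the reduction to the case-by-case formulas of Propositions \ref{prop: frB on wt vec; type I}, \ref{prop: frB on wt vec; type II}, and \ref{prop: frB on wt vec; type III} combined with Lemma \ref{lem: values in axioms for gSat} (which supplies the bound $\langle h_k, w_\bullet\alpha_k-\alpha_k\rangle \le -2$ you exploit in Case 2), with items \eqref{item: summary for Bk,zeta 2} and \eqref{item: summary for Bk,zeta 3} then declared immediate from item \eqref{item: summary for Bk,zeta 1}. Your case analysis, including the divided-power factorization and the key inequality $z(c+2)\le -1$, is just a correct filling-in of those details.
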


\begin{proof}
  By Lemma \ref{lem: values in axioms for gSat}, the first assertion follows from Propositions \ref{prop: frB on wt vec; type I}, \ref{prop: frB on wt vec; type II}, and \ref{prop: frB on wt vec; type III}.
  The other assertions are immediate from the first one.
\end{proof}

\begin{prop}\label{prop: summary for Bk,zeta on int Umod}
  Let $V$ be an integrable $\mathbf{U}$-module, $\zeta \in X^\imath$, and $v \in V_\zeta$.
  Then, there exist $(a_j)_{j \in I_\bullet}, (b_j)_{j \in I_\bullet} \in \mathbb{Z}_{\geq 0}^{I_\bullet}$, $(c_k)_{k \in I_\circ} \in \mathbb{Z}_{\geq 0}^{I_\circ}$ such that
  \[
    E_j^{(a_j+1)} v = F_j^{(b_j+1)} v = \mathfrak{B}_{k,\zeta}^{(c_k+1)} v = 0.
  \]
\end{prop}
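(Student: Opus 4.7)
The overall strategy is to assemble the three type-specific statements of the preceding subsubsections into a single unified claim. The integrability of $V$ as a $\mathbf{U}$-module says precisely that each Chevalley generator acts locally nilpotently on every weight vector, so for $j \in I_\bullet$ the exponents $a_j$ and $b_j$ can be extracted directly from $\mathbf{U}$-integrability, after writing $v$ as a finite sum of $\mathbf{U}$-weight vectors and taking the maximum across the finitely many summands.

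The content is in the $\mathfrak{B}_{k,\zeta}^{(c_k+1)}$ part, and here I would proceed as follows. Since $V$ is a weight $\mathbf{U}$-module, the identification \eqref{eq: wt Umod as wt Uimod} gives $V_\zeta = \bigoplus_{\bar{\lambda}=\zeta} V_\lambda$, so $v$ decomposes as a finite sum $v = \sum_{i=1}^r v_i$ with $v_i \in V_{\lambda_i}$ and $\bar{\lambda_i} = \zeta$. Fix $k \in I_\circ$ and split into the three cases $\tau(k) = k = w_\bullet k$, $\tau(k) = k \neq w_\bullet k$, or $\tau(k) \neq k$. In each case, Proposition \ref{prop: frB on int Umod; type I}, \ref{prop: frB on int Umod; type II}, or \ref{prop: frB on int Umod; type III} supplies, for every summand $v_i$, an integer $N_{k,i}$ with $\mathfrak{B}_{k,\bar{\lambda_i}}^{(n+1)} v_i = 0$ for all $n \geq N_{k,i}$. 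Since $\bar{\lambda_i} = \zeta$, these elements coincide with $\mathfrak{B}_{k,\zeta}^{(n+1)}$, so setting $c_k := \max_i N_{k,i}$ kills all the summands simultaneously and yields $\mathfrak{B}_{k,\zeta}^{(c_k+1)} v = 0$.

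The argument is essentially bookkeeping, so I do not expect a genuine obstacle here: the heavy lifting, in particular the commutator identities \eqref{eq: expansion Bk^n} and \eqref{eq: expansion Bk^n; type III} and the case-by-case nilpotency analysis on integrable modules, has already been carried out. The only point to be careful about is the passage from a $\mathbf{U}^\imath$-weight vector to its $\mathbf{U}$-weight components, which must be a \emph{finite} decomposition; this is automatic from the direct-sum definition of a weight module and should be noted explicitly to avoid conflating it with the stronger finiteness built into integrability.
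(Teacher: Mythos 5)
Your proposal is correct and follows exactly the paper's argument: decompose the $\mathbf{U}^\imath$-weight vector $v$ into its finitely many $\mathbf{U}$-weight components via \eqref{eq: wt Umod as wt Uimod}, apply Propositions \ref{prop: frB on int Umod; type I}, \ref{prop: frB on int Umod; type II}, and \ref{prop: frB on int Umod; type III} to each component, and take maxima. The $E_j$, $F_j$ part from $\mathbf{U}$-integrability is handled the same way.
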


\begin{proof}
  Since the weight vector $v$ is a finite sum of weight vectors of the weight $\mathbf{U}$-module $V$ (see \eqref{eq: wt Umod as wt Uimod}), the assertion follows from Propositions \ref{prop: frB on int Umod; type I}, \ref{prop: frB on int Umod; type II}, and \ref{prop: frB on int Umod; type III}.
\end{proof}

\subsection{A Presentation of the $\mathbf{U}^\imath$-module $V^\imath(\lambda, \mu)$}
In this subsection, we fix $\lambda, \mu \in X^+$, and set
\begin{align*}
  &\nu := w_\bullet \lambda + \mu, \\
  &\mathcal{E} := \sum_{j \in I_\bullet} \mathbf{U} E_j^{(\langle h_j, -w_\bullet \lambda \rangle + 1)} m^\imath_{\lambda, \mu}, \\
  &\mathcal{E}' := \sum_{j \in I_\bullet} \mathbf{U}^\imath E_j^{(\langle h_j, -w_\bullet \lambda \rangle + 1)} m^\imath_{\lambda, \mu}, \\
  &\mathcal{F} := \sum_{j \in I_\bullet} \mathbf{U} F_j^{(\langle h_j, \mu \rangle + 1)} m^\imath_{\lambda, \mu}, \\
  &\mathcal{F}' := \sum_{j \in I_\bullet} \mathbf{U}^\imath F_j^{(\langle h_j, \mu \rangle + 1)} m^\imath_{\lambda, \mu}, \\
  &\mathcal{F}_\circ := \sum_{k \in I_\circ} \mathbf{U} F_k^{(\langle h_k, w_\bullet \lambda + \mu \rangle + 1)} m^\imath_{\lambda, \mu}, \\
  &\mathcal{B} := \sum_{k \in I_\circ} \sum_{b \in \mathbf{B}_{\mathbf{L}}(-\infty)[w_\bullet \lambda]} \mathbf{U} F_k^{(\langle h_k, \nu + \operatorname{wt}(b) \rangle + 1)} b m^\imath_{\lambda, \mu}, \\
  &\mathcal{B}' := \sum_{k \in I_\circ} \sum_{b \in \mathbf{B}_{\mathbf{L}}(-\infty)[w_\bullet \lambda]} \mathbf{U}^\imath \mathfrak{B}_{k, \overline{\nu + \operatorname{wt}(b)}}^{(\langle h_k, \nu + \operatorname{wt}(b) \rangle + 1)} b m^\imath_{\lambda, \mu}.
\end{align*}

The aim of this subsection is to show that
\[
  \mathcal{E} + \mathcal{F} + \mathcal{F}_\circ = \mathcal{E} + \mathcal{F} + \mathcal{B} = \mathcal{E}' + \mathcal{F}' + \mathcal{B}'.
\]
This provides us with a presentation of the $\mathbf{U}^\imath$-module $V^\imath(\lambda, \mu)$.

\begin{lem}\label{lem: UW = UiW}
  Let $V$ be a weight $\mathbf{U}$-module, and $W \subseteq V$ a subspace.
  If $\mathbf{U} \mathbf{R}^+ W \subseteq W$, then $\mathbf{U} W = \mathbf{U}^\imath W$.
\end{lem}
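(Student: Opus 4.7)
The inclusion $\mathbf{U}^\imath W\subseteq\mathbf{U} W$ is tautological, so all the content is in the reverse. My strategy is to combine Lemma~\ref{lem: P- simeq U/UR+} with Proposition~\ref{prop: Uidot simeq Pdot-} to rewrite every element of $\mathbf{U}$, when acting on a weight vector, as an element of $\mathbf{U}^\imath$ plus an element of $\mathbf{U}\mathbf{R}^+$; the hypothesis then absorbs the $\mathbf{U}\mathbf{R}^+$ correction into $W$.

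First I would reduce to the case where $W$ is weight-graded. Let $W'$ denote the sum of the weight components of elements of $W$. By $Y$-regularity, the weight components of a vector in $V$ are recovered by applying polynomials in the $K_h$'s, so $\mathbf{U} W=\mathbf{U} W'$ and $\mathbf{U}^\imath W=\mathbf{U}^\imath W'$. Moreover, $\mathbf{U}\mathbf{R}^+$ is weight-graded and $V$ is a weight module, so the assumption $\mathbf{U}\mathbf{R}^+ W\subseteq W$ upgrades to $\mathbf{U}\mathbf{R}^+ W'\subseteq W'$. Hence we may assume $W=\bigoplus_\lambda W_\lambda$.

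The decisive step combines the two cited results. Lemma~\ref{lem: P- simeq U/UR+} supplies the direct-sum decomposition $\mathbf{U} 1^\mathbf{U}_\lambda=\mathbf{P}^- 1^\mathbf{U}_\lambda\oplus\mathbf{U}\mathbf{R}^+ 1^\mathbf{U}_\lambda$, and Proposition~\ref{prop: Uidot simeq Pdot-} provides a $\mathbf{U}^\imath$-module isomorphism $\mathbf{U}^\imath 1_\lambda\xrightarrow{\sim}\dot{\mathbf{P}}^- 1_\lambda\simeq\mathbf{U} 1^\mathbf{U}_\lambda/\mathbf{U}\mathbf{R}^+ 1^\mathbf{U}_\lambda$ whose underlying map sends $u\cdot 1_\lambda$ to $[u\cdot 1^\mathbf{U}_\lambda]$. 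Surjectivity of this composition yields the subspace equality
\[
  \mathbf{U} 1^\mathbf{U}_\lambda=\mathbf{U}^\imath\cdot 1^\mathbf{U}_\lambda+\mathbf{U}\mathbf{R}^+ 1^\mathbf{U}_\lambda.
\]
Evaluating at any weight vector $v\in W_\lambda$ and invoking $\mathbf{U}\mathbf{R}^+ v\subseteq\mathbf{U}\mathbf{R}^+ W\subseteq W\subseteq\mathbf{U}^\imath W$ gives $\mathbf{U} v\subseteq\mathbf{U}^\imath v+\mathbf{U}\mathbf{R}^+ v\subseteq\mathbf{U}^\imath W$; summing over weight vectors of $W$ finishes the proof.

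The only genuinely subtle point I expect is extracting the above subspace identity from the abstract $\mathbf{U}^\imath$-module isomorphism of Proposition~\ref{prop: Uidot simeq Pdot-}: one has to unwind that the $\mathbf{U}^\imath$-action on $\dot{\mathbf{P}}^- 1_\lambda$ in that proposition is precisely the restriction along $\mathbf{U}^\imath\hookrightarrow\mathbf{U}$ of the $\mathbf{U}$-module structure on $\mathbf{U} 1^\mathbf{U}_\lambda/\mathbf{U}\mathbf{R}^+ 1^\mathbf{U}_\lambda$ identified in Lemma~\ref{lem: P- simeq U/UR+}. Once this is checked, the argument reduces to tracking subspaces.
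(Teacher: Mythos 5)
Your central step---extracting the subspace identity
\[
  \mathbf{U} 1^\mathbf{U}_\lambda=\mathbf{U}^\imath\cdot 1^\mathbf{U}_\lambda+\mathbf{U}\mathbf{R}^+ 1^\mathbf{U}_\lambda
\]
from Lemma~\ref{lem: P- simeq U/UR+} and Proposition~\ref{prop: Uidot simeq Pdot-}, evaluating it on a weight vector $v\in W$, and absorbing $\mathbf{U}\mathbf{R}^+v$ into $W$ via the hypothesis---is exactly the argument the paper intends (its proof is a one-line citation of the same two results), and it is complete whenever $W$ is spanned by $X$-weight vectors, which is all the paper ever needs.

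The gap is in your reduction to weight-graded $W$. The equality $\mathbf{U}W=\mathbf{U}W'$ is fine, since the projections onto $X$-weight components are polynomials in the $K_h$ with $h\in Y$. But $\mathbf{U}^\imath$ contains only $K_h$ with $h\in Y^\imath$, and for such $h$ the eigenvalue $q^{\langle h,\lambda\rangle}$ depends only on $\bar{\lambda}\in X^\imath$; these elements can separate the $X^\imath$-components of a vector but not its $X$-components within a fixed class $\bar{\lambda}$. So the claim $\mathbf{U}^\imath W=\mathbf{U}^\imath W'$ does not follow, and without it you only reach $\mathbf{U}W=\mathbf{U}^\imath W'\supseteq\mathbf{U}^\imath W$. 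Moreover, the reduction cannot be repaired, because the statement genuinely fails for subspaces not spanned by weight vectors: take $I=I_\circ=\{1\}$, $\tau=\mathrm{id}$, $\kappa_1=0$, so that $Y^\imath=0$ and $\mathbf{U}^\imath=\mathbb{Q}(q)[B_1]$; let $V=V(2)\oplus V(0)$ with highest weight vectors $v_2,v_0$, and $W=\mathbb{Q}(q)(v_2+v_0)$. Then $\mathbf{R}^+W=0$, so the hypothesis holds, and $\mathbf{U}W=V$ is $4$-dimensional; but $B_1v_0=0$, so $\mathbf{U}^\imath W\subseteq\mathbb{Q}(q)(v_2+v_0)+B_1V(2)$, which is at most $3$-dimensional because $0$ is an eigenvalue of $B_1$ on $V(2)$ (its eigenvalues there are $\kappa^{(2)},\kappa^{(0)}=0,\kappa^{(-2)}$). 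Hence $\mathbf{U}^\imath W\subsetneq\mathbf{U}W$. The correct fix is not a reduction but an added hypothesis: delete the first paragraph of your argument and assume from the outset that $W$ is spanned by weight vectors, which is how the lemma is in fact used.
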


\begin{proof}
  The assertion follows from the isomorphism $\mathbf{U}^\imath 1_\lambda \simeq \dot{\mathbf{P}}^- 1_\lambda \simeq \dot{\mathbf{U}} 1_\lambda/ \mathbf{U} \mathbf{R}^+ 1_\lambda$, which is obtained from Proposition \ref{prop: Uidot simeq Pdot-} and Lemma \ref{lem: P- simeq U/UR+}.
\end{proof}

\begin{lem}\label{lem: UW = UiW for Ej}
  We have
  \[
    \mathcal{E} = \mathcal{E}'.
  \]
\end{lem}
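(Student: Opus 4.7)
The plan is to reduce the claim to Lemma \ref{lem: UW = UiW} applied to the finite-dimensional subspace
\[
  W := \sum_{j \in I_\bullet} \mathbb{Q}(q)\, E_j^{(\langle h_j, -w_\bullet \lambda \rangle + 1)} m^\imath_{\lambda,\mu}
\]
of $V := M^\imath(\lambda, \mu)$. Directly from the definitions we have $\mathbf{U} W = \mathcal{E}$ and $\mathbf{U}^\imath W = \mathcal{E}'$, so the equality $\mathcal{E} = \mathcal{E}'$ will follow from the lemma once the hypothesis $\mathbf{U}\mathbf{R}^+ W \subseteq W$ is verified. In fact, I intend to establish the stronger fact $\mathbf{R}^+ W = 0$.

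To do this, I will exploit the induced-module realization $M^\imath(\lambda,\mu) = \mathbf{U} \otimes_{\mathbf{P}^+} M_\mathbf{L}(-w_\bullet \lambda, \mu)$, under which $m^\imath_{\lambda,\mu} = 1 \otimes m_{\mathbf{L}; -w_\bullet\lambda, \mu}$. Since $E_j \in \mathbf{L} \subseteq \mathbf{P}^+$ for every $j \in I_\bullet$, pushing $E_j^{(n)}$ across the tensor gives
\[
  E_j^{(n)} m^\imath_{\lambda,\mu} = 1 \otimes E_j^{(n)} m_{\mathbf{L}; -w_\bullet\lambda, \mu}.
\]
For any $r \in \mathbf{R}^+ \subseteq \mathbf{U}^+ \subseteq \mathbf{P}^+$, the same mechanism yields
\[
  r \cdot \bigl(1 \otimes E_j^{(n)} m_{\mathbf{L}; -w_\bullet\lambda, \mu}\bigr) = 1 \otimes \bigl(r \cdot E_j^{(n)} m_{\mathbf{L}; -w_\bullet\lambda, \mu}\bigr),
\]
where on the right-hand side $r$ acts on the $\mathbf{L}$-module $M_\mathbf{L}(-w_\bullet \lambda, \mu)$ via the projection $\mathbf{P}^+ \twoheadrightarrow \mathbf{L}$ supplied by Proposition \ref{prop: levi decomposition of P+}. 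Since $r \in \mathbf{R}^+ \subseteq \mathbf{L}^- \mathbf{U}^0 \mathbf{R}^+$, the kernel of that projection, we get $r \cdot E_j^{(n)} m_{\mathbf{L}; -w_\bullet\lambda, \mu} = 0$. This shows $\mathbf{R}^+ W = 0$, hence $\mathbf{U}\mathbf{R}^+ W = 0 \subseteq W$, and Lemma \ref{lem: UW = UiW} delivers the desired equality.

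There is no real obstacle: the entire argument is a formal consequence of the construction of $M^\imath(\lambda,\mu)$ as a parabolic induction combined with the Levi decomposition of $\mathbf{P}^+$. The only point that deserves care is to recognize which projection $\mathbf{P}^+ \twoheadrightarrow \mathbf{L}$ is in play (namely the one from Proposition \ref{prop: levi decomposition of P+}) and to verify that $\mathbf{R}^+$ sits inside its kernel, which is immediate from the direct-sum decomposition $\mathbf{P}^+ = \mathbf{L} \oplus \mathbf{L}^- \mathbf{U}^0 \mathbf{R}^+$.
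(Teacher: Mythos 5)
Your proof is correct and follows essentially the same route as the paper: both arguments reduce to Lemma \ref{lem: UW = UiW} by showing that $\mathbf{R}^+$ annihilates the vectors $E_j^{(\langle h_j, -w_\bullet \lambda \rangle + 1)} m^\imath_{\lambda,\mu}$. The paper gets this from $\mathbf{R}^+ E_j \subseteq \mathbf{R}^+$ together with $\mathbf{R}^+ m^\imath_{\lambda,\mu} = 0$, whereas you push $E_j^{(n)}$ across the tensor in the induced module and invoke the Levi projection; these are the same computation phrased slightly differently.
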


\begin{proof}
  Let $j \in I_\bullet$ and set $v := E_j^{(\langle h_j, -w_\bullet \lambda \rangle + 1)} m^\imath_{\lambda, \mu} \in M^\imath(\lambda, \mu)$.
  Since $\mathbf{R}^+ E_j \subset \mathbf{R}^+$, we have
  \[
    \mathbf{R}^+ v \subset \mathbf{R}^+ m^\imath_{\lambda, \mu} = 0.
  \]
  Applying Lemma \ref{lem: UW = UiW} to the $1$-dimensional subspace spanned by $v$, we obtain
  \[
    \mathbf{U} v = \mathbf{U}^\imath v.
  \]
  This implies the assertion.
\end{proof}

\begin{lem}\label{lem: UW = UiW for Fj}
  We have
  \[
    \mathcal{F} = \mathcal{F}'.
  \]
\end{lem}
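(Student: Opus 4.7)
The plan is to mirror the proof of Lemma \ref{lem: UW = UiW for Ej} by applying Lemma \ref{lem: UW = UiW} to a suitable subspace. Concretely, fix $j \in I_\bullet$, set $N := \langle h_j, \mu \rangle + 1$ and $v := F_j^{(N)} m^\imath_{\lambda, \mu}$, and take $W := \mathbb{Q}(q) v$. If I can establish $\mathbf{R}^+ v = 0$, then the hypothesis $\mathbf{U} \mathbf{R}^+ W \subseteq W$ of Lemma \ref{lem: UW = UiW} is trivially satisfied, yielding $\mathbf{U} v = \mathbf{U}^\imath v$; summing over $j \in I_\bullet$ then gives $\mathcal{F} = \mathcal{F}'$.

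Establishing $\mathbf{R}^+ v = 0$ reduces to a commutator computation. First, $\mathbf{R}^+ m^\imath_{\lambda,\mu} = 0$: under the identification $M^\imath(\lambda,\mu) = \operatorname{Ind}_{\mathbf{P}^+}^{\mathbf{U}} M_{\mathbf{L}}(-w_\bullet \lambda, \mu)$, the vector $m^\imath_{\lambda,\mu}$ is $1 \otimes m_{\mathbf{L};-w_\bullet\lambda,\mu}$, and $\mathbf{R}^+ \subseteq \mathbf{P}^+$ acts by zero on the Levi module via the projection $\mathbf{P}^+ \twoheadrightarrow \mathbf{L}$. Second, I claim the inclusion
\[
  \mathbf{R}^+ F_j \subseteq F_j \mathbf{R}^+ + \mathbf{R}^+ \mathbf{U}^0 \quad \text{in } \mathbf{U}.
\]
By \eqref{eq: levi decomposition of Upm}, $\mathbf{R}^+$ is spanned by monomials $E_{i_1} \cdots E_{i_l}$ containing at least one factor $E_k$ with $k \in I_\circ$. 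Using $E_i F_j - F_j E_i = \delta_{i,j} \frac{K_j - K_j^{-1}}{q_j - q_j^{-1}}$, the commutator of such a monomial with $F_j$ is a sum of terms in which exactly one factor $E_{i_s} = E_j$ has been replaced by a Cartan element; after moving that Cartan element rightward past the remaining $E$'s (which only introduces scalar powers of $q$), the surviving product of $E$'s still contains an $E_k$-factor with $k \in I_\circ$, and hence lies in $\mathbf{R}^+ \mathbf{U}^0$.

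Iterating this inclusion, together with $\mathbf{U}^0 F_j \subseteq F_j \mathbf{U}^0$, gives $\mathbf{R}^+ F_j^n \subseteq \sum_{m=0}^n F_j^m \mathbf{R}^+ \mathbf{U}^0$ for every $n \geq 0$. Applied to the weight vector $m^\imath_{\lambda,\mu}$, on which $\mathbf{U}^0$ acts by scalars, each summand vanishes thanks to $\mathbf{R}^+ m^\imath_{\lambda,\mu} = 0$. In particular $\mathbf{R}^+ v \in \mathbb{Q}(q) \cdot \mathbf{R}^+ F_j^N m^\imath_{\lambda,\mu} = 0$, as required. There is no single hard step; the only delicacy is the bookkeeping in the commutation claim, which is straightforward because $j \in I_\bullet$ forces the nonzero commutators to fire on $E_j$-factors while preserving the $I_\circ$-factors that witness membership in $\mathbf{R}^+$.
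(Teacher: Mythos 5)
Your proof is correct and follows the same strategy as the paper's: reduce everything to the single vanishing statement $\mathbf{R}^+ F_j^{(\langle h_j, \mu \rangle + 1)} m^\imath_{\lambda, \mu} = 0$ and then invoke Lemma \ref{lem: UW = UiW} exactly as in the proof of Lemma \ref{lem: UW = UiW for Ej}. The only divergence is in how that vanishing is checked: the paper rewrites $x F_j^{(\langle h_j, \mu \rangle + 1)} m^\imath_{\lambda, \mu}$ for $x \in \mathbf{L}^+$ using the triangular decomposition \eqref{eq: triangular decomposition of L} of $\mathbf{L}$ and the fact that $E_k$ ($k \in I_\circ$) commutes with $\mathbf{L}^-$, whereas you establish the equivalent commutation inclusion $\mathbf{R}^+ F_j \subseteq F_j \mathbf{R}^+ + \mathbf{R}^+ \mathbf{U}^0$ directly on monomials; both verifications are valid and rest on the same structural fact.
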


\begin{proof}
  Let $j \in I_\bullet$ and set $v := F_j^{(\langle h_j, \mu \rangle + 1)} m^\imath_{\lambda, \mu} \in M^\imath(\lambda, \mu)$.
  Let $x \in \mathbf{L}^+$ and $k \in I_\circ$.
  By the triangular decomposition \eqref{eq: triangular decomposition of L} of $\mathbf{L}$, the vector $E_k x v$ is, up to scalar multiplies, of the form $E_k x^- x^+ m^\imath_{\lambda, \mu}$ with $x^- \in \mathbf{L}^-$, $x^+ \in \mathbf{L}^+$.
  Since $E_k$ commutes with the elements in $\mathbf{L}^-$, we have
  \[
    E_k x^- x^+ m^\imath_{\lambda, \mu} = x^- E_k x^+ m^\imath_{\lambda, \mu} \in x^- \mathbf{R}^+ m^\imath_{\lambda, \mu} = 0.
  \]

  The argument above implies that $\mathbf{R}^+ v = 0$.
  Hence, the assertion follows by applying Lemma \ref{lem: UW = UiW} as in the proof of Lemma \ref{lem: UW = UiW for Ej}.
\end{proof}

\begin{lem}\label{lem: B + E = B' + E}
  We have
  \[
    \mathcal{B} + \mathcal{E} = \mathcal{B}' + \mathcal{E}.
  \]
\end{lem}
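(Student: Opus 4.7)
The plan is to use item (1) of Proposition~\ref{prop: summary for Bk,zeta} as a dictionary between the generators of $\mathcal{B}'$ and $\mathcal{B}$, and to absorb the resulting discrepancies into either $\mathcal{E}$ or into higher-weight generators of the same side. First, for each $k \in I_\circ$ and $b \in \mathbf{B}_\mathbf{L}(-\infty)[w_\bullet\lambda]$, one checks that $v := b m^\imath_{\lambda,\mu}$ satisfies the hypotheses of that proposition: the positivity $\langle h_k, \nu + \operatorname{wt}(b)\rangle \geq 0$ follows from $\mu \in X^+$ combined with the nonvanishing of $b \cdot {}^\omega v_{\mathbf{L},-w_\bullet\lambda}$ in the integrable lowest weight $\mathbf{L}$-module, while the annihilations $E_k \mathbf{L}^+ v = E_{\tau(k)} \mathbf{L}^+ v = 0$ follow from $\mathbf{R}^+ m^\imath_{\lambda,\mu} = 0$ (a consequence of the induced-module construction $M^\imath(\lambda,\mu) = \operatorname{Ind}_{\mathbf{P}^+}^\mathbf{U} M_\mathbf{L}(-w_\bullet\lambda,\mu)$, where $\mathbf{R}^+$ acts as $0$ via the projection $\mathbf{P}^+ \to \mathbf{L}$) together with $\mathbf{L}^+ E_k \mathbf{L}^+$, $\mathbf{L}^+ E_{\tau(k)} \mathbf{L}^+ \subseteq \mathbf{R}^+$, the latter holding because $\mathbf{R}^+$ is a two-sided ideal of $\mathbf{U}^+$ containing $E_k$ and $E_{\tau(k)}$.

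Next, applying item (1) of Proposition~\ref{prop: summary for Bk,zeta} to $v$ expresses each generator of $\mathcal{B}'$ as the corresponding generator of $\mathcal{B}$ plus a correction lying in $\sum_{b' \in \mathbf{B}_\mathbf{L}(-\infty) \setminus \{1\}} \mathbf{U}^-_k F_k^{(\langle h_k, \nu + \operatorname{wt}(b) + \operatorname{wt}(b')\rangle + 1)} b' b m^\imath_{\lambda,\mu}$. The next step is to expand each product $b' b \in \mathbf{L}^+$ in the canonical basis $\mathbf{B}_\mathbf{L}(-\infty)$: summands indexed by $\tilde{b} \in \mathbf{B}_\mathbf{L}(-\infty)[w_\bullet\lambda]$ furnish new generators of $\mathcal{B}$ of weight $\operatorname{wt}(b) + \operatorname{wt}(b') > \operatorname{wt}(b)$, whereas summands indexed by $\tilde{b} \notin \mathbf{B}_\mathbf{L}(-\infty)[w_\bullet\lambda]$ should be absorbed into $\mathcal{E}$ via the Levi analogue of~\eqref{eq: def rel of wV(lm) in cb}, transported from ${}^\omega M_\mathbf{L}(-w_\bullet\lambda)$ to $M^\imath(\lambda,\mu)$ along the $\mathbf{L}$-module isomorphism $\mathbf{L} m^\imath_{\lambda,\mu} \simeq M_\mathbf{L}(-w_\bullet\lambda,\mu)$.

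The induction then closes since ${}^\omega V_\mathbf{L}(-w_\bullet\lambda)$ is finite-dimensional, so the weights of elements of $\mathbf{B}_\mathbf{L}(-\infty)[w_\bullet\lambda]$ are bounded above; processing generators from the largest weight downward yields $\mathcal{B}' + \mathcal{E} \subseteq \mathcal{B} + \mathcal{E}$. The reverse inclusion is obtained symmetrically by solving the same relation for $F_k^{(n+1)} b m^\imath_{\lambda,\mu}$ in terms of $\mathfrak{B}$-generators plus higher-weight corrections, and then invoking Lemma~\ref{lem: UW = UiW} to pass from $\mathbf{U}$-spans to $\mathbf{U}^\imath$-spans after noting that $\mathbf{R}^+$ annihilates every generator in sight. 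The main obstacle I anticipate is precisely the absorption step: for $\tilde{b} \notin \mathbf{B}_\mathbf{L}(-\infty)[w_\bullet\lambda]$, the identity~\eqref{eq: def rel of wV(lm) in cb} directly controls only the first tensor factor in the realization $\mathbf{L} m^\imath_{\lambda,\mu} \simeq {}^\omega M_\mathbf{L}(-w_\bullet\lambda) \otimes M_\mathbf{L}(\mu)$, so one must argue carefully that the portion of $\tilde{b} m^\imath_{\lambda,\mu}$ that matters ends up in the $\mathbf{U}$-span of the $E_j$-generators and does not spill into the $F_j$-generators of $\mathcal{F}$, which are not allowed in the statement.
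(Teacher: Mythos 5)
Your overall strategy coincides with the paper's: use Proposition \ref{prop: summary for Bk,zeta}~\eqref{item: summary for Bk,zeta 1} to relate the two kinds of generators, absorb the terms indexed by $\tilde b \notin \mathbf{B}_\mathbf{L}(-\infty)[w_\bullet\lambda]$ into $\mathcal{E}$ via \eqref{eq: def rel of wV(lm) in cb}, run a descending induction on $\operatorname{wt}(b)$ (which terminates because $\mathbf{B}_\mathbf{L}(-\infty)[w_\bullet\lambda]$ has bounded weights), and invoke Lemma \ref{lem: UW = UiW} to reconcile the $\mathbf{U}$-spans in $\mathcal{B}$ with the $\mathbf{U}^\imath$-spans in $\mathcal{B}'$. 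The verification of the hypotheses of Proposition \ref{prop: summary for Bk,zeta} and your treatment of the absorption step are fine (the worry you raise at the end is resolved by the observation that $\mathbf{L}^+$ applied to $m^\imath_{\lambda,\mu} = {}^\omega m_{\mathbf{L},-w_\bullet\lambda}\otimes m_{\mathbf{L},\mu}$ effectively acts on the first tensor factor only, since $\mathbf{L}^+$ annihilates $m_{\mathbf{L},\mu}$, so \eqref{eq: def rel of wV(lm) in cb} for the Levi transports directly).

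However, there is a genuine gap in the reverse inclusion, precisely at the point where you apply Lemma \ref{lem: UW = UiW} ``after noting that $\mathbf{R}^+$ annihilates every generator in sight.'' This is false. The elements $\mathfrak{B}_{k,\zeta}^{(n+1)}$ are built from $B_k$, which contains the summand $\varsigma_k T_{w_\bullet}(E_{\tau(k)})K_k^{-1}$, so $E_k\,\mathfrak{B}_{k,\zeta}^{(n+1)} b\, m^\imath_{\lambda,\mu}$ is nonzero in general; indeed Proposition \ref{prop: summary for Bk,zeta}~\eqref{item: summary for Bk,zeta 2} asserts only that it lies in $\sum_{b_1\neq 1}\mathbf{U} F_k^{(\langle h_k,\lambda+\operatorname{wt}(b_1)\rangle+1)}b_1 v$, not that it vanishes. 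Even the generators $F_k^{(\langle h_k,\nu+\operatorname{wt}(b)\rangle+1)} b\, m^\imath_{\lambda,\mu}$ of $\mathcal{B}$ are not killed by all of $\mathbf{R}^+$: while $E_{k'}$ alone annihilates them, an element such as $E_k E_j$ with $j\in I_\bullet$ shifts the weight so that the $\mathfrak{sl}_2$-cancellation fails, producing a nonzero vector. What is actually true, and what the hypothesis $\mathbf{U}\mathbf{R}^+ W\subseteq W$ of Lemma \ref{lem: UW = UiW} requires, is that $\mathbf{R}^+$ applied to a generator of weight $\operatorname{wt}(b)$ lands in the $\mathbf{U}$-span of the generators of strictly larger weight together with $\mathcal{E}$; establishing this is the technical heart of the argument and needs parts \eqref{item: summary for Bk,zeta 2} and \eqref{item: summary for Bk,zeta 3} of Proposition \ref{prop: summary for Bk,zeta}, applied within the weight filtration $W'_{\geq b} := \mathbb{Q}(q)\mathfrak{B}_{k,\overline{\lambda_b}}^{(\langle h_k,\lambda_b\rangle+1)}b\,m^\imath_{\lambda,\mu} + W'_{>b}$ on which the descending induction runs. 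Without this step you cannot conclude $\mathbf{U} W'_{\geq b} = \mathbf{U}^\imath W'_{\geq b}$, and hence cannot pass from the $\mathbf{U}$-module $\mathcal{B}$ to the merely $\mathbf{U}^\imath$-spanned $\mathcal{B}'$.
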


\begin{proof}
  For each $k \in I_\circ$ and $b \in \mathbf{B}_\mathbf{L}(-\infty)$, set
  \begin{itemize}
    \item $\lambda_b := \nu + \operatorname{wt}(b)$,
    \item $v_{k,b} := F_k^{(\langle h_k, \lambda_b \rangle + 1)} b m^\imath_{\lambda, \mu}$,
    \item $v'_{k,b} := \mathfrak{B}_{k, \overline{\lambda_b}}^{(\langle h_k, \lambda_b \rangle + 1)} b m^\imath_{\lambda, \mu}$,
    \item $W_{> b} := \sum_{k \in I_\circ} \sum_{\substack{b' \in \mathbf{B}_\mathbf{L}(-\infty) \\ \operatorname{wt}(b') > \operatorname{wt}(b)}} \mathbf{U} v_{k,b'} + \mathcal{E}$,
    \item $W'_{> b} := \sum_{k \in I_\circ} \sum_{\substack{b' \in \mathbf{B}_\mathbf{L}(-\infty) \\ \operatorname{wt}(b') > \operatorname{wt}(b)}} \mathbf{U}^\imath v'_{k,b'} + \mathcal{E}$,
    \item $W_{\geq b} := \mathbb{Q}(q) v_{k,b} + W_{> b}$,
    \item $W'_{\geq b} := \mathbb{Q}(q) v'_{k,b} + W'_{> b}$,
  \end{itemize}
  Since $b m^\imath_{\lambda,\mu} \in \mathcal{E}$ for all $b \in \mathbf{B}_\mathbf{L}(-\infty) \setminus \mathbf{B}_\mathbf{L}(-\infty)[w_\bullet\lambda]$ by equation \eqref{eq: def rel of wV(lm) in cb}, we have $v_{k,b},v'_{k,b} \in \mathcal{E}$ for all $k \in I_\circ$ and such $b \in \mathbf{B}_\mathbf{L}(-\infty)$.
  Hence, it holds that
  \[
    W_{>b} = \sum_{k \in I_\circ} \sum_{\substack{b' \in \mathbf{B}_\mathbf{L}(-\infty)[w_\bullet\lambda] \\ \operatorname{wt}(b') > \operatorname{wt}(b)}} \mathbf{U} v_{k,b'} + \mathcal{E}, \text{ and } W'_{>b} = \sum_{k \in I_\circ} \sum_{\substack{b' \in \mathbf{B}_\mathbf{L}(-\infty)[w_\bullet\lambda] \\ \operatorname{wt}(b') > \operatorname{wt}(b)}} \mathbf{U}^\imath v'_{k,b'} + \mathcal{E}.
  \]
  In particular,
  \[
    \mathbf{U} W_{\geq 1} = \mathcal{B} + \mathcal{E} \text{ and } \mathbf{U}^\imath W'_{\geq 1} = \mathcal{B}' + \mathcal{E}.
  \]

  We shall show that
  \[
    \mathbf{U}^\imath W'_{\geq b} = \mathbf{U} W_{\geq b}
  \]
  for all $b \in \mathbf{B}_\mathbf{L}(-\infty)$ by descending induction on $\operatorname{wt}(b)$.
  If this is the case, then we obtain
  \[
    \mathcal{B} + \mathcal{E} = \mathbf{U} W_{\geq 1} = \mathbf{U}^\imath W'_{\geq 1} = \mathcal{B}' + \mathcal{E},
  \]
  as desired.

  When $\operatorname{wt}(b) \gg 0$, it holds that $b' \notin \mathbf{B}_\mathbf{L}(-\infty)[w_\bullet \lambda]$ for all $b' \in \mathbf{B}_\mathbf{L}(-\infty)$ such that $\operatorname{wt}(b') \geq \operatorname{wt}(b)$.
  These imply that $v_{k,b'}, v'_{k,b'} \in \mathcal{E}$ and hence the subspaces $W_{\geq b}$ and $W'_{\geq b}$ are equal to $\mathcal{E}$.
  Hence, our claim follows in this case.

  Assume that the claim holds for all $b' \in \mathbf{B}_\mathbf{L}(-\infty)$ with $\operatorname{wt}(b') > \operatorname{wt}(b)$.
  Since
  \[
    W_{> b} = \sum_{\substack{b' \in \mathbf{B}_\mathbf{L}(-\infty) \\ \operatorname{wt}(b') > \operatorname{wt}(b)}} \mathbf{U} W_{\geq b'} \text{ and } W'_{> b} = \sum_{\substack{b' \in \mathbf{B}_\mathbf{L}(-\infty) \\ \operatorname{wt}(b') > \operatorname{wt}(b)}} \mathbf{U}^\imath W'_{\geq b'},
  \]
  these spaces are identical to each other.

  Let us show that $\mathbf{U} \mathbf{R}^+ W'_{\geq b} \subseteq W'_{\geq b}$.
  Since $W'_{\geq b} = \mathbb{Q}(q) v'_{k,b} \oplus W_{> b}$ and $\mathbf{U} W_{> b} = W_{> b}$, we only need to show that $\mathbf{R}^+ v'_{k,b} \in W_{> b}$.
  To this end, we will prove that $E_{k'} b'' v'_{k,b} \in W_{> b}$ for all $k' \in I_\circ$ and $b'' \in \mathbf{B}_\mathbf{L}(-\infty)$.
  It is easily seen that the vector $E_{k'} b'' v'_{k,b}$ is equal to $0$ unless $k' = k$.
  Hence, we may assume that $k' = k$.

  When $b'' = 1$, by Proposition \ref{prop: summary for Bk,zeta} \eqref{item: summary for Bk,zeta 2}, we have
  \[
    E_k v'_{k,b} \in \sum_{b_1 \in \mathbf{B}(-\infty) \setminus \{ 1 \}} \mathbf{U} F_k^{(\langle h_k, \nu + \operatorname{wt}(b) + \operatorname{wt}(b_1) \rangle + 1)} b_1 b m^\imath_{\lambda, \mu}.
  \]
  Since $b_1 b$ is a linear combination of $b_2 \in \mathbf{B}(-\infty)$ with $\operatorname{wt}(b_2) = \operatorname{wt}(b) + \operatorname{wt}(b_1)$, we see that
  \[
    F_k^{(\langle h_k, \nu + \operatorname{wt}(b) + \operatorname{wt}(b_1) \rangle + 1)} b_1 b m^\imath_{\lambda, \mu} \in \sum_{\operatorname{wt}(b_2) > \operatorname{wt}(b)} \mathbb{Q}(q) F_k^{(\langle h_k, \nu+\operatorname{wt}(b_2) \rangle + 1)} b_2 m^\imath_{\lambda,\mu}.
  \]
  Therefore, we obtain
  \[
    E_k v'_{k,b} \in W_{> b},
  \]
  as desired.

  When $b'' \neq 1$, by Proposition \ref{prop: summary for Bk,zeta} \eqref{item: summary for Bk,zeta 3}, we have
  \[
    E_k b'' v'_{k,b} \in \sum_{\operatorname{wt}(b') \geq \operatorname{wt}(b'')} \mathbf{U} F_k^{(\langle h_k, \nu + \operatorname{wt}(b) + \operatorname{wt}(b') \rangle + 1)} b' b m^\imath_{\lambda, \mu} = W_{> b}.
  \]
  This completes the proof of the claim that $\mathbf{U} \mathbf{R}^+ W'_{\geq b} \subseteq W'_{\geq b}$.

  Let us apply Lemma \ref{lem: UW = UiW} to obtain
  \[
    \mathbf{U} W'_{\geq b} = \mathbf{U}^\imath W'_{\geq b}.
  \]
  By Proposition \ref{prop: summary for Bk,zeta} \eqref{item: summary for Bk,zeta 1}, we have
  \[
    v'_{k,b} - v_{k,b} \in W_{> b}.
  \]
  This shows that
  \[
    W'_{\geq b} = W_{\geq b}.
  \]
  Thus, we complete the proof.
\end{proof}

\begin{lem}\label{lem: F_circ + E = B + E}
  We have
  \[
    \mathcal{F}_\circ + \mathcal{E} = \mathcal{B} + \mathcal{E}.
  \]
\end{lem}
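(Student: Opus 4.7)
The plan is to establish the stronger inclusion $\mathcal{B} \subseteq \mathcal{F}_\circ$; the claimed equality then follows together with the trivial reverse inclusion, which is obtained by taking $b = 1 \in \mathbf{B}_\mathbf{L}(-\infty)[w_\bullet \lambda]$ and noting that the corresponding generator of $\mathcal{B}$ is exactly a generator of $\mathcal{F}_\circ$.

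For $\mathcal{B} \subseteq \mathcal{F}_\circ$, the key structural fact is that $F_k$ commutes with every element of $\mathbf{L}^+$, because $[E_j, F_k] = 0$ whenever $j \in I_\bullet$ and $k \in I_\circ$. Fix $b \in \mathbf{B}_\mathbf{L}(-\infty)[w_\bullet \lambda]$, $k \in I_\circ$, and set $M := \langle h_k, \nu \rangle$, $\delta := -\langle h_k, \operatorname{wt}(b) \rangle$. Then $\delta \geq 0$ (since $a_{k,j} \leq 0$ for $j \in I_\bullet$), and $M + 1 - \delta = \langle h_k, \nu + \operatorname{wt}(b) \rangle + 1$ is precisely the exponent appearing in the corresponding generator of $\mathcal{B}$. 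The commutation yields $F_k^{(M+1)} b m^\imath_{\lambda, \mu} = b F_k^{(M+1)} m^\imath_{\lambda, \mu} \in \mathcal{F}_\circ$.

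The heart of the argument is to apply $E_k^{(\delta)}$ to this element and extract the desired generator of $\mathcal{B}$. Using the standard $U_q(\mathfrak{sl}_2)$ commutation formula expanding $E_k^{(\delta)} F_k^{(M+1)}$ as a sum indexed by $t$, the terms with $\delta - t \geq 1$ vanish because $E_k^{(s)} \in \mathbf{R}^+$ for $s \geq 1$, because $\mathbf{R}^+$ is a right ideal in $\mathbf{U}^+$ (so $E_k^{(s)} b \in \mathbf{R}^+$), and because $\mathbf{R}^+ m^\imath_{\lambda, \mu} = 0$ via the isomorphism $M^\imath(\lambda, \mu) \simeq \dot{\mathbf{U}} 1_\nu / \dot{\mathbf{U}} \mathbf{R}^+ 1_\nu$ coming from Lemma \ref{lem: P- simeq U/UR+} and \eqref{eq: Mi simeq P^-}. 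Only the $t = \delta$ summand survives, and evaluating the accompanying Gaussian binomial on $b m^\imath_{\lambda, \mu}$, a $K_k$-eigenvector of weight $q_k^{M - \delta}$, collapses the coefficient to $(-1)^\delta$ via $[-s]_k = -[s]_k$. This yields the identity
\[
  E_k^{(\delta)} F_k^{(M+1)} b m^\imath_{\lambda, \mu} = (-1)^\delta F_k^{(M+1-\delta)} b m^\imath_{\lambda, \mu},
\]
whose left-hand side lies in $\mathcal{F}_\circ$, forcing the right-hand side into $\mathcal{F}_\circ$ as well.

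The only place requiring real care is this final coefficient computation; I do not foresee a genuine obstacle beyond it, since the boundary case $\delta > M + 1$ is automatic (both sides of the identity vanish under the convention $F_k^{(n)} = 0$ for $n < 0$), and no induction or delicate filtration is needed.
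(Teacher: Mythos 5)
Your proposal is correct and follows essentially the same route as the paper: the inclusion $\mathcal{F}_\circ \subseteq \mathcal{B}$ is the $b=1$ case, and for the reverse one commutes $F_k$ past $b \in \mathbf{L}^+$ and applies $E_k^{(\delta)}$ with $\delta = -\langle h_k, \operatorname{wt}(b)\rangle$ to $b\,F_k^{(\langle h_k,\nu\rangle+1)} m^\imath_{\lambda,\mu} \in \mathcal{F}_\circ$, recovering $F_k^{(\langle h_k,\nu+\operatorname{wt}(b)\rangle+1)} b\, m^\imath_{\lambda,\mu}$ up to the nonzero scalar $(-1)^\delta$ (the paper only asserts ``a scalar multiple,'' whereas you compute it, and it also notes the inequality $\langle h_k, \nu + \operatorname{wt}(b)\rangle \geq \langle h_k,\lambda+\mu\rangle \geq 0$ for $b \in \mathbf{B}_\mathbf{L}(-\infty)[w_\bullet\lambda]$, which rules out your boundary case altogether).
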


\begin{proof}
  Clearly, we have $\mathcal{F}_\circ \subseteq \mathcal{B}$.
  Hence, we only need to show that $F_k^{(\langle h_k, \nu + \operatorname{wt}(b) \rangle + 1)} b m^\imath_{\lambda, \mu} \in \mathcal{F}_\circ$ for all $k \in I_\circ$ and $b \in \mathbf{B}_\mathbf{L}(-\infty)$.
  Since $b m^\imath_{\lambda, \mu} \in \mathcal{E}$ if $b \notin \mathbf{B}_\mathbf{L}(-\infty)[w_\bullet \lambda]$, we may assume that $b \in \mathbf{B}_\mathbf{L}(-\infty)[w_\bullet \lambda]$.
  Noting that $\operatorname{wt}(b) \in \sum_{j \in I_\bullet} \mathbb{Z}_{\geq 0} \alpha_j$ and $(\lambda - w_\bullet \lambda) - \operatorname{wt}(b) \in \sum_{j \in I_\bullet} \mathbb{Z}_{\geq 0} \alpha_j$, we see that
  \[
    \langle h_k, \operatorname{wt}(b) \rangle \leq 0 \text{ and } \langle h_k, w_\bullet \lambda + \mu + \operatorname{wt}(b) \rangle \geq \langle h_k, \lambda + \mu \rangle \geq 0.
  \]
  Therefore, the vector $F_k^{(\langle h_k, \nu + \operatorname{wt}(b) \rangle + 1)} b m^\imath_{\lambda, \mu}$ is a scalar multiple of the vector
  \[
    E_k^{-\langle h_k, \operatorname{wt}(b) \rangle} b F_k^{(\langle h_k, w_\bullet \lambda + \mu \rangle + 1)} m^\imath_{\lambda, \mu}.
  \]
  This implies our claim, and the proof is completed.
\end{proof}

\begin{thm}\label{thm: E+F+F_circ = E'+F'+B'}
  We have
  \[
    \mathcal{E} + \mathcal{F} + \mathcal{F}_\circ = \mathcal{E}' + \mathcal{F}' + \mathcal{B}'.
  \]
  In other words, the $\mathbf{U}^\imath$-module $V^\imath(\lambda, \mu)$ coincides with the quotient $\mathbf{U}^\imath$-module of $M^\imath(\lambda, \mu)$ factored by the submodule generated by
  \[
    \{ E_j^{(\langle h_j, -w_\bullet \lambda \rangle + 1)} m^\imath_{\lambda, \mu},\ F_j^{(\langle h_j, \mu \rangle + 1)} m^\imath_{\lambda, \mu} \mid j \in I_\bullet \}
  \]
  and
  \[
    \{ \mathfrak{B}_{k, \overline{w_\bullet \lambda + \mu + \mathrm{wt}(b)}}^{(\langle h_k, w_\bullet \lambda + \mu + \operatorname{wt}(b) \rangle + 1)} b m^\imath_{\lambda, \mu} \mid k \in I_\circ,\ b \in \mathbf{B}_\mathbf{L}(-\infty) \}.
  \]
\end{thm}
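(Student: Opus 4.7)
The plan is to assemble the four preceding lemmas, which have been crafted precisely to culminate in this theorem, and then translate the resulting equality of submodules into the claimed module-theoretic presentation.

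For the first equality $\mathcal{E} + \mathcal{F} + \mathcal{F}_\circ = \mathcal{E}' + \mathcal{F}' + \mathcal{B}'$, I would proceed in three steps. Starting from the left-hand side, I would add $\mathcal{F}$ as a spectator to the equality $\mathcal{F}_\circ + \mathcal{E} = \mathcal{B} + \mathcal{E}$ of Lemma \ref{lem: F_circ + E = B + E} to get $\mathcal{E} + \mathcal{F} + \mathcal{F}_\circ = \mathcal{E} + \mathcal{F} + \mathcal{B}$. Next, I would apply Lemma \ref{lem: B + E = B' + E} (again with $\mathcal{F}$ as spectator) to upgrade $\mathcal{B}$ to $\mathcal{B}'$, yielding $\mathcal{E} + \mathcal{F} + \mathcal{B} = \mathcal{E} + \mathcal{F} + \mathcal{B}'$. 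Finally, Lemmas \ref{lem: UW = UiW for Ej} and \ref{lem: UW = UiW for Fj} replace $\mathcal{E}$ and $\mathcal{F}$ with $\mathcal{E}'$ and $\mathcal{F}'$ respectively. Chaining these three identities produces the claimed equality.

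For the ``in other words'' reformulation, I would observe that $\mathcal{E}' + \mathcal{F}' + \mathcal{B}'$ is, by its very definition, a $\mathbf{U}^\imath$-submodule of $M^\imath(\lambda, \mu)$ generated by the vectors
\[
  \{ E_j^{(\langle h_j, -w_\bullet \lambda \rangle + 1)} m^\imath_{\lambda, \mu},\ F_j^{(\langle h_j, \mu \rangle + 1)} m^\imath_{\lambda, \mu} \mid j \in I_\bullet \}
\]
together with $\{ \mathfrak{B}_{k, \overline{\nu + \operatorname{wt}(b)}}^{(\langle h_k, \nu + \operatorname{wt}(b) \rangle + 1)} b m^\imath_{\lambda, \mu} \mid k \in I_\circ,\ b \in \mathbf{B}_\mathbf{L}(-\infty)[w_\bullet\lambda] \}$. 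The only cosmetic difference with the set stated in the theorem is that the theorem indexes $b$ over all of $\mathbf{B}_\mathbf{L}(-\infty)$ rather than $\mathbf{B}_\mathbf{L}(-\infty)[w_\bullet\lambda]$. This is harmless: the Levi analogue of equation \eqref{eq: def rel of wV(lm) in cb}, applied to ${}^\omega V_\mathbf{L}(-w_\bullet \lambda)$, shows that for $b \in \mathbf{B}_\mathbf{L}(-\infty) \setminus \mathbf{B}_\mathbf{L}(-\infty)[w_\bullet \lambda]$ the vector $b m^\imath_{\lambda, \mu}$ already lies in $\mathcal{E} = \mathcal{E}'$, so the corresponding extra generators $\mathfrak{B}_{k, \ldots}^{(\ldots)} b m^\imath_{\lambda, \mu}$ lie in $\mathcal{E}'$ and contribute nothing new to the generated $\mathbf{U}^\imath$-submodule.

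Since the genuine analytic and combinatorial work---in particular, the descending induction on $\operatorname{wt}(b)$ and the three-case analysis packaged in Proposition \ref{prop: summary for Bk,zeta} that powers Lemma \ref{lem: B + E = B' + E}---has already been completed in the preceding subsections, no significant obstacle remains here. The theorem is essentially a bookkeeping assembly of the four lemmas, with only the mild verification above needed to align the generating set with the slightly broader indexing in the statement.
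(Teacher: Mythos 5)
Your proposal is correct and follows essentially the same route as the paper, whose proof is simply the one-line assembly of Lemmas \ref{lem: UW = UiW for Ej}, \ref{lem: UW = UiW for Fj}, \ref{lem: B + E = B' + E}, and \ref{lem: F_circ + E = B + E} that you spell out. Your extra remark reconciling the indexing of $b$ over $\mathbf{B}_\mathbf{L}(-\infty)$ versus $\mathbf{B}_\mathbf{L}(-\infty)[w_\bullet\lambda]$ via equation \eqref{eq: def rel of wV(lm) in cb} is a detail the paper leaves implicit, and it is handled correctly.
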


\begin{proof}
  The assertion follows from Lemmas \ref{lem: UW = UiW for Ej}, \ref{lem: UW = UiW for Fj}, \ref{lem: B + E = B' + E}, and \ref{lem: F_circ + E = B + E}.
\end{proof}

\begin{cor}
  Let $V$ be an integrable $\mathbf{U}^\imath$-module.
  Then, for each $\zeta \in X^\imath$ and $v \in V_\zeta$, there exist $(a_j)_{j \in I_\bullet}, (b_j)_{j \in I_\bullet} \in \mathbb{Z}_{\geq 0}^{I_\bullet}$ and $(c_k)_{k \in I_\circ} \in \mathbb{Z}_{\geq 0}^{I_\circ}$ such that
  \[
    E_j^{(a_j+1)} v = F_j^{(b_j+1)} v = 0 \ \text{ for all } j \in I_\bullet
  \]
  and
  \[
    \mathfrak{B}_{k,\zeta}^{(c_k+1)} v = 0 \ \text{ for all } k \in I_\circ.
  \]
\end{cor}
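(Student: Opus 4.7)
The plan is to reduce the statement directly to Theorem~\ref{thm: E+F+F_circ = E'+F'+B'} via the universal property built into Definition~\ref{def: integrable Ui mod}. Concretely, given $v \in V_\zeta$ I first apply the definition of integrability to produce $\lambda, \mu \in X^+$ and a $\mathbf{U}^\imath$-module homomorphism $\phi \colon V^\imath(\lambda, \mu) \to V$ sending $v^\imath_{\lambda, \mu}$ to $v$. Matching weights forces $\zeta = \overline{w_\bullet \lambda + \mu}$.

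I then invoke Theorem~\ref{thm: E+F+F_circ = E'+F'+B'}, which presents $V^\imath(\lambda,\mu)$ as the quotient of $M^\imath(\lambda,\mu)$ by the $\mathbf{U}^\imath$-submodule generated by the explicit families of vectors listed there. The $E_j$- and $F_j$-relations are already of the form I need. To extract the desired annihilation by $\mathfrak{B}_{k,\zeta}^{(\cdot)}$ I specialize the family of $\mathfrak{B}$-relations to $b = 1 \in \mathbf{B}_\mathbf{L}(-\infty)$, which yields
\[
  \mathfrak{B}_{k, \zeta}^{(\langle h_k, w_\bullet \lambda + \mu \rangle + 1)} v^\imath_{\lambda,\mu} = 0 \quad \text{in } V^\imath(\lambda, \mu)
\]
for every $k \in I_\circ$. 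Applying $\phi$ transports all these annihilations from $v^\imath_{\lambda,\mu}$ to $v$, so I set $a_j := \langle h_j, -w_\bullet \lambda \rangle$, $b_j := \langle h_j, \mu \rangle$, and $c_k := \langle h_k, w_\bullet \lambda + \mu \rangle$.

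It remains to confirm that each of these integers is nonnegative, so that the divided/$\imath$-divided power formulas make sense as stated. For $b_j$ this is immediate from $\mu \in X^+$. For $a_j$ with $j \in I_\bullet$ I would cite the standard fact that $-w_\bullet$ permutes $\{ h_j \mid j \in I_\bullet \}$, giving $\langle h_j, -w_\bullet\lambda\rangle = \langle h_{\iota(j)}, \lambda\rangle \geq 0$ for an involution $\iota$ of $I_\bullet$. For $c_k$ with $k \in I_\circ$, I would write $\lambda - w_\bullet \lambda \in \sum_{j \in I_\bullet} \mathbb{Z}_{\geq 0} \alpha_j$ and use $\langle h_k, \alpha_j \rangle = a_{k,j} \leq 0$ for $k \neq j$ to deduce $\langle h_k, w_\bullet \lambda \rangle \geq \langle h_k, \lambda \rangle \geq 0$; adding $\langle h_k, \mu\rangle \geq 0$ finishes the job.

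There is no real obstacle: all of the substantive content has been absorbed into Theorem~\ref{thm: E+F+F_circ = E'+F'+B'} and the preceding analysis of the elements $\mathfrak{B}_{k,\zeta}^{(n)}$ (Propositions~\ref{prop: summary for Bk,zeta} and \ref{prop: summary for Bk,zeta on int Umod}), so the corollary is a transparent application of the universal property of $V^\imath(\lambda, \mu)$. If anything warrants extra care, it is just the bookkeeping that the $b = 1$ instance of the $\mathfrak{B}$-relations genuinely acts on $v^\imath_{\lambda,\mu}$ with the exponent indexed by $\zeta = \overline{w_\bullet \lambda + \mu}$, which is the unique place where the universal weight identification is used.
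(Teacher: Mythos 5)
Your proposal is correct and matches the paper's (implicit) argument: the corollary is stated as an immediate consequence of Theorem \ref{thm: E+F+F_circ = E'+F'+B'}, obtained exactly as you describe by pulling the $b=1$ instances of the defining relations of $V^\imath(\lambda,\mu)$ through the homomorphism furnished by Definition \ref{def: integrable Ui mod}. Your nonnegativity checks for $a_j$, $b_j$, $c_k$ are the same computations the paper uses (e.g.\ in the proof of Lemma \ref{lem: F_circ + E = B + E}), so nothing is missing.
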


\subsection{Some sufficient conditions}
In this subsection, we give some sufficient conditions for a weight $\mathbf{U}^\imath$-module to be integrable.

\begin{prop}\label{prop: local nilp => integrable}
  Let $V$ be a weight $\mathbf{U}^\imath$-module generated by a weight vector $v$ of weight $\zeta$.
  Suppose that there exist $(a_j)_{j \in I_\bullet}, (b_j)_{j \in I_\bullet} \in \mathbb{Z}_{\geq 0}^{I_\bullet}$ and $(c_k)_{k \in I_\circ} \in \mathbb{Z}_{\geq 0}^{I_\circ}$ such that
  \[
    E_j^{(a_j+1)} v = F_j^{(b_j+1)} v = 0 \ \text{ for all } j \in I_\bullet
  \]
  and
  \[
    \mathfrak{B}_{k,\zeta}^{(c_k+1)} v = 0 \ \text{ for all } k \in I_\circ.
  \]
  Then, $V$ is integrable.
\end{prop}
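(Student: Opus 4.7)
The plan is as follows. I first reduce to a core existence statement: given $v$ satisfying the hypotheses, produce $\lambda, \mu \in X^+$ and a surjective $\mathbf{U}^\imath$-module homomorphism $V^\imath(\lambda, \mu) \twoheadrightarrow V$ sending $v^\imath_{\lambda, \mu}$ to $v$. Granted this, full integrability of $V$ follows by a bootstrap. Any weight vector $w \in V$ lifts to some $\tilde w \in V^\imath(\lambda, \mu)$; by Proposition \ref{prop: Vi simeq Li}, the latter embeds in $V(\lambda) \otimes V(\mu)$, which is an integrable $\mathbf{U}$-module, so Proposition \ref{prop: summary for Bk,zeta on int Umod} yields local $E_j, F_j$- and $\mathfrak{B}_{k, \cdot}^{(\cdot)}$-nilpotency on $\tilde w$, which descends under the surjection to $w$. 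Re-applying the core statement to the cyclic $\mathbf{U}^\imath$-submodule $\mathbf{U}^\imath w$ then yields, for every weight vector $w$, a $\mathbf{U}^\imath$-module homomorphism from some $V^\imath(\lambda', \mu')$ to $V$ sending $v^\imath_{\lambda', \mu'}$ to $w$, which is exactly what Definition \ref{def: integrable Ui mod} demands.

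For the core step, I first check that the $\mathbf{L}$-submodule $\mathbf{L}v$ is finite-dimensional. The hypothesis $E_j^{(a_j+1)} v = F_j^{(b_j+1)} v = 0$, combined with the local nilpotency of $\operatorname{ad}(E_j)$ and $\operatorname{ad}(F_j)$ on $\mathbf{L}$ (which holds because $\mathbf{L}$ is of finite type, via the $q$-Serre relations and the standard commutator expansion $E_j^N x = \sum_k (\operatorname{ad} E_j)^k(x) \cdot (\text{scalar}) \cdot E_j^{N-k}$), propagates to local $E_j, F_j$-nilpotency on every weight vector of $\mathbf{L}v$. Hence $\mathbf{L}v$ is integrable and cyclic, and by \cite[Proposition 23.3.10]{Lus93} applied to $\mathbf{L}$ it is a quotient of the finite-dimensional $V_\mathbf{L}(\lambda_0, \mu_0)$ for some $\lambda_0, \mu_0$. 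In particular, the set $\{ b \in \mathbf{B}_\mathbf{L}(-\infty) : bv \neq 0 \}$ is finite.

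I then select $\lambda, \mu \in X^+$ satisfying: (i) $\overline{w_\bullet \lambda + \mu} = \zeta$; (ii) $\langle h_j, -w_\bullet \lambda \rangle \geq a_j$ and $\langle h_j, \mu \rangle \geq b_j$ for every $j \in I_\bullet$; and (iii) $\langle h_k, w_\bullet \lambda + \mu + \operatorname{wt}(b) \rangle \geq N_{k, b}$ for each $k \in I_\circ$ and each $b$ with $bv \neq 0$, where $N_{k, b}$ is the threshold introduced below. Such a choice exists: any initial lift of $\zeta$ can be enlarged by adding to both $\lambda$ and $\mu$ a sufficiently large $\tau$-invariant dominant weight, which preserves (i) and drives the other pairings above any prescribed bound. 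The universal property of $M^\imath(\lambda, \mu)$ then produces a surjective homomorphism $\phi : M^\imath(\lambda, \mu) \twoheadrightarrow V$ with $\phi(m^\imath_{\lambda, \mu}) = v$, and by Theorem \ref{thm: E+F+F_circ = E'+F'+B'} the factorization of $\phi$ through $V^\imath(\lambda, \mu)$ amounts to annihilating, on $v$, the three explicit families of defining vectors. Conditions (ii) dispatch the $E_j$- and $F_j$-relations directly.

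The main obstacle is producing the thresholds $N_{k, b}$, namely showing that for each $k \in I_\circ$ and each $b$ with $bv \neq 0$ there exists $N_{k, b}$ with $\mathfrak{B}_{k, \zeta + \overline{\operatorname{wt}(b)}}^{(n+1)} bv = 0$ for all $n \geq N_{k, b}$. The case $b = 1$ is the hypothesis $\mathfrak{B}_{k, \zeta}^{(c_k+1)} v = 0$. For general $b \in \mathbf{L}^-$ one argues by induction on the grading of $b$, commuting each $F_j$ past $\mathfrak{B}_{k, \cdot}^{(n+1)}$ by means of the PBW-style expansions of $B_k^n$ supplied by Cases 1--3 of Subsection 4.1: the leading contribution takes the form $F_j \cdot \mathfrak{B}_{k, \cdot}^{(m+1)} b' v$ with $b'$ of strictly smaller grading, hence is killed by the inductive hypothesis, while the correction terms push $\mathbf{L}^+$-actions onto $\mathbf{L}v$ and vanish once $n$ exceeds a bound determined by the finite-dimensional space $\mathbf{L}v$. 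With the $N_{k, b}$ in hand, condition (iii) ensures that $\phi$ annihilates the $\mathfrak{B}$-relations as well, so $\phi$ descends to the required surjection $V^\imath(\lambda, \mu) \twoheadrightarrow V$.
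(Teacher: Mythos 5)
Your overall architecture coincides with the paper's: you show that $\mathbf{L}v$ is finite-dimensional so that $B_V=\{b\in\mathbf{B}_\mathbf{L}(-\infty)\mid bv\neq0\}$ is finite, enlarge $\lambda,\mu$ (adding $\tau(\nu)$ and $\nu$ so as to preserve $\overline{w_\bullet\lambda+\mu}=\zeta$) until the pairings dominate the prescribed exponents, and then use Theorem \ref{thm: E+F+F_circ = E'+F'+B'} to factor the universal map $M^\imath(\lambda,\mu)\to V$ through $V^\imath(\lambda,\mu)$. Your explicit bootstrap from ``a surjection hitting the generator $v$'' to ``every weight vector is hit'' is a correct and welcome filling-in of a step the paper leaves implicit, and the treatment of the $E_j$- and $F_j$-relations is the same as the paper's.

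The gap is in the one step where you depart from the paper: producing the thresholds $N_{k,b}$ for $b\neq1$, i.e.\ the vanishing of $\mathfrak{B}_{k,\overline{w_\bullet\lambda+\mu+\operatorname{wt}(b)}}^{(\langle h_k,w_\bullet\lambda+\mu+\operatorname{wt}(b)\rangle+1)}bv$. (Note first that $\mathbf{B}_\mathbf{L}(-\infty)$ is the canonical basis of $\mathbf{L}^+$, so $b$ is a word in the $E_j$, not an element of $\mathbf{L}^-$.) Your induction on $\operatorname{wt}(b)$ rests on the claim that the correction terms arising from commuting a Chevalley generator of $\mathbf{L}$ past $\mathfrak{B}_{k,\cdot}^{(n+1)}$ ``push $\mathbf{L}^+$-actions onto $\mathbf{L}v$ and vanish once $n$ exceeds a bound determined by the finite-dimensional space $\mathbf{L}v$.'' This is not justified: those correction terms still carry positive powers of $B_k$ (resp.\ of $Y_k$, $Z_k$) acting on weight vectors of $\mathbf{L}v$ \emph{other than} $v$, and the hypotheses control $B_k$ only on $v$ itself; finite-dimensionality of $\mathbf{L}v$ gives no vanishing of such terms for large $n$ in an abstract weight $\mathbf{U}^\imath$-module. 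Moreover, the expansions \eqref{eq: expansion Bk^n} you invoke are identities in $\mathbf{U}$ whose individual summands $F_k^fZ_k^zY_k^y$ do not lie in $\mathbf{U}^\imath$, so they cannot be applied termwise to vectors of $V$; Propositions \ref{prop: frB on wt vec; type I}--\ref{prop: frB on wt vec; type III} are only available on weight $\mathbf{U}$-modules satisfying $E_k\mathbf{L}^+v=0$. The paper disposes of this step quite differently: it moves $b$ entirely past $\mathfrak{B}_{k,\cdot}^{(\cdot)}$, reducing the relation to $b\,\mathfrak{B}_{k,\zeta}^{(M+1)}v$ with $M=\langle h_k,w_\bullet\lambda+\mu+\operatorname{wt}(b)\rangle\geq c_k$, and then uses that $\mathfrak{B}_{k,\zeta}^{(M+1)}\in\mathbf{U}^\imath\mathfrak{B}_{k,\zeta}^{(c_k+1)}$, so that everything is absorbed into the single hypothesis $\mathfrak{B}_{k,\zeta}^{(c_k+1)}v=0$; no induction on $\operatorname{wt}(b)$ and no thresholds $N_{k,b}$ are needed. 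Finally, your base case ``$b=1$ is the hypothesis'' silently upgrades the vanishing of the single element $\mathfrak{B}_{k,\zeta}^{(c_k+1)}v$ to the vanishing of $\mathfrak{B}_{k,\zeta}^{(n+1)}v$ for all $n\geq c_k$; in Case 1 ($\tau(k)=k=w_\bullet k$) the recursion defining $\mathfrak{B}_{k,\zeta}^{(n)}$ descends only through one parity class of $n$, so this upgrade holds only along that class, and the choice of $\lambda,\mu$ (which fixes the parity of $\langle h_k,w_\bullet\lambda+\mu\rangle$) must be made compatibly.
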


\begin{proof}
  The assumption on $v$ implies that the $\mathbf{L}$-module $\mathbf{L} v$ is integrable.
  Since $\mathbf{L}$ is a quantum group of finite type, every cyclic integrable module is finite-dimensional.
  Therefore, the set
  \[
    B_V := \{ b \in \mathbf{B}_\mathbf{L}(-\infty) \mid bv \neq 0 \}
  \]
  is finite.

  Let $\lambda,\mu \in X^+$ be such that $\overline{w_\bullet \lambda + \mu} = \zeta$.
  By replacing $\lambda$ and $\mu$ by $\lambda + \tau(\nu)$ and $\mu + \nu$, respectively for a suitable $\nu \in X^+$, we may assume that
  \begin{align*}
    &\langle h_j, -w_\bullet \lambda \rangle \geq a_j, \\
    &\langle h_j, \mu \rangle \geq b_j, \\
    &\langle h_k, w_\bullet \lambda + \mu + \operatorname{wt}(b) \rangle \geq c_k
  \end{align*}
  for all $j \in I_\bullet$, $k \in I_\circ$, and $b \in B_V$.
  These imply that
  \begin{align*}
    &E_j^{(\langle h_j, -w_\bullet \lambda \rangle + 1)} v \in \mathbf{U}^\imath E_j^{(a_j+1)}v = 0, \\
    &F_j^{(\langle h_j, \mu \rangle + 1)} v \in \mathbf{U}^\imath F_j^{(b_j+1)}v = 0, \\
    &\mathfrak{B}_{k, \overline{w_\bullet\lambda + \mu} + \mathrm{wt}^\imath(b)}^{(\langle h_k, w_\bullet\lambda + \mu + \operatorname{wt}(b) \rangle + 1)} bv = b \mathfrak{B}_{k, \overline{w_\bullet\lambda + \mu}}^{(\langle h_k, w_\bullet\lambda + \mu + \operatorname{wt}(b) \rangle + 1)} v \in \mathbf{U}^\imath \mathfrak{B}_{k,\zeta}^{(c_k+1)} v = 0
  \end{align*}
  for all $j \in I_\bullet$, $k \in I_\circ$, and $b \in B_V$.
  Hence, the assertion follows from Theorem \ref{thm: E+F+F_circ = E'+F'+B'}.
\end{proof}

\begin{prop}\label{prop: int Umod is int Uimod}
  An integrable $\mathbf{U}$-module is integrable as a $\mathbf{U}^\imath$-module.
\end{prop}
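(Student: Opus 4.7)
The plan is to reduce to a direct application of Proposition \ref{prop: local nilp => integrable}, using Proposition \ref{prop: summary for Bk,zeta on int Umod} as the bridge from $\mathbf{U}$-integrability to the local nilpotency conditions required by the former.

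Let $V$ be an integrable $\mathbf{U}$-module, regarded as a weight $\mathbf{U}^\imath$-module via the canonical structure \eqref{eq: wt Umod as wt Uimod}. By Definition \ref{def: integrable Ui mod}, I must show that for every weight vector $v \in V_\zeta$ there is a $\mathbf{U}^\imath$-module homomorphism $V^\imath(\lambda,\mu) \to V$ sending $v^\imath_{\lambda,\mu}$ to $v$. First I would replace $V$ by the cyclic $\mathbf{U}^\imath$-submodule $W := \mathbf{U}^\imath v \subseteq V$; since compositions with the inclusion $W \hookrightarrow V$ yield homomorphisms into $V$, it suffices to produce the required homomorphism into $W$.

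Next, I would invoke Proposition \ref{prop: summary for Bk,zeta on int Umod}: since $V$ is integrable as a $\mathbf{U}$-module and $v$ is a weight vector, there exist tuples $(a_j)_{j \in I_\bullet}$, $(b_j)_{j \in I_\bullet} \in \mathbb{Z}_{\geq 0}^{I_\bullet}$ and $(c_k)_{k \in I_\circ} \in \mathbb{Z}_{\geq 0}^{I_\circ}$ such that
\[
  E_j^{(a_j+1)} v = F_j^{(b_j+1)} v = 0 \text{ for all } j \in I_\bullet, \quad \mathfrak{B}_{k,\zeta}^{(c_k+1)} v = 0 \text{ for all } k \in I_\circ.
\]
These are precisely the hypotheses of Proposition \ref{prop: local nilp => integrable}, applied to the cyclic $\mathbf{U}^\imath$-module $W$ with distinguished generator $v$. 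Consequently, $W$ is an integrable $\mathbf{U}^\imath$-module, and by tracing through the construction in that proposition, one obtains explicit $\lambda, \mu \in X^+$ and a surjective $\mathbf{U}^\imath$-module homomorphism $V^\imath(\lambda, \mu) \twoheadrightarrow W$ sending $v^\imath_{\lambda, \mu}$ to $v$. Composing with $W \hookrightarrow V$ finishes the proof.

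The main conceptual work has already been carried out in the preceding subsections: Proposition \ref{prop: summary for Bk,zeta on int Umod} translates $\mathbf{U}$-integrability into finite-order vanishing of the $\mathfrak{B}_{k,\zeta}^{(n)}$, and Proposition \ref{prop: local nilp => integrable} feeds such vanishing into the new presentation of $V^\imath(\lambda,\mu)$ given by Theorem \ref{thm: E+F+F_circ = E'+F'+B'}. So I expect no real obstacle here; the proof is essentially a two-line invocation of those two results together with the reduction to the cyclic submodule.
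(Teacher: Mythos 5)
Your proposal is correct and follows essentially the same route as the paper: both deduce the local nilpotency of $E_j^{(a_j+1)}$, $F_j^{(b_j+1)}$ from $\mathbf{U}$-integrability and of $\mathfrak{B}_{k,\zeta}^{(c_k+1)}$ from Proposition \ref{prop: summary for Bk,zeta on int Umod}, and then conclude via the presentation of $V^\imath(\lambda,\mu)$. The only cosmetic difference is that you package the final step as an explicit appeal to Proposition \ref{prop: local nilp => integrable} applied to the cyclic submodule $\mathbf{U}^\imath v$, whereas the paper verifies the conditions (including $\mathfrak{B}_{k,\zeta}^{(c_k+1)}bv=0$ for all $b\in\mathbf{B}_\mathbf{L}(-\infty)$) and invokes Theorem \ref{thm: E+F+F_circ = E'+F'+B'} directly.
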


\begin{proof}
  Let $V$ be an integrable $\mathbf{U}$-module.
  It is a weight $\mathbf{U}^\imath$-module.
  Let $\zeta \in X^\imath$ and $v \in V_\zeta$.
  Let us write
  \[
    v = \sum_{r=1}^s v_r
  \]
  for some $s \in \mathbb{Z}_{> 0}$, $v_r \in V_{\lambda_r}$, and $\lambda_r \in X$ with $\overline{\lambda_r} = \zeta$.

  Since $V$ is an integrable $\mathbf{U}$-module, there exist $a_j,b_j \in \mathbb{Z}_{\geq 0}$ for $j \in I_\bullet$ such that
  \[
    E_j^{(a_j+1)} v_r = F_j^{(b_j+1)} v_r = 0
  \]
  for all $r = 1,\dots,s$.
  Hence, we obtain
  \[
    E_j^{(a_j+1)} v = F_j^{(a_j+1)} v = 0 \ \text{ for all } j \in I_\bullet.
  \]

  Also, for each $k \in I_\circ$, by Proposition \ref{prop: summary for Bk,zeta on int Umod}, there exists $c_k \geq 0$ such that
  \[
    \mathfrak{B}_{k,\zeta}^{(c_k+1)} b v = 0 \ \text{ for all } b \in \mathbf{B}_\mathbf{L}(-\infty).
  \]
  This completes the proof.
\end{proof}

\section{Quantized coordinate algebras}\label{sec: qca}
In this section, we assume the following:
\begin{itemize}
  \item the Cartan datum $I$ is of finite type,
  \item the involution $\rho$ on $\mathbf{U}$ (\cite[19.1.1]{Lus93}) given by 
  \[
    \rho(E_i) = q_i K_i F_i, \quad \rho(F_i) = q_i K_i^{-1} E_i, \quad \rho(K_h) = K_h,
  \]
  preserves $\mathbf{U}^\imath$,
  \item the canonical basis $\dot{\mathbf{B}}^\imath$ of $\dot{\mathbf{U}}^\imath$ possesses the stability property (or strong compatibility) ({\it cf}.\ \cite[Remark 6.18]{BaWa18}).
\end{itemize}
For example, if $\kappa_i$ are all zero and each $\varsigma_i$ is a certain signed power of $q_i$ as in \cite[proof of Lemma 4.2.1]{Wat23e}, then the last two conditions above are satisfied by \cite[Proposition 4.6]{BaWa18} and \cite[Theorem 4.3.1]{Wat23e}.
The parameters in \cite[proof of Lemma 4.2.1]{Wat23e} are almost the same as in \cite[Table 3]{BaWa18}, where the constants are deduced by requiring that each $\varsigma_i$ is a signed power of $q_i$ and that the bar-involution (which was used to construct $\dot{\mathbf{B}}^\imath$) on $\mathbf{U}^\imath$ exists.

\subsection{Simple integrable modules}
Let $\lambda, \mu \in X^+$ and consider the $\mathbf{U}^\imath$-module $V^\imath(\lambda, \mu)$.
It is equipped with a nondegenerate symmetric bilinear form $(,) = (,)_{\lambda, \mu}$ such that $(v^\imath_{\lambda, \mu}, v^\imath_{\lambda, \mu}) = 1$ and
\[
  (x u, v) = (u, \rho(x) v) \ \text{ for all } x \in \mathbf{U}^\imath,\ u,v \in V^\imath(\lambda, \mu)
\]
({\it cf}.\ \cite[\S 6.6]{BaWa18}).
Since $V^\imath(\lambda, \mu)$ is finite-dimensional, the existence of such a bilinear form ensures that it is semisimple.
In fact, if $W$ is a submodule of $V^\imath(\lambda,\mu)$, and $W^\perp$ denotes the orthogonal complement of $W$ with respect to $(,)_{\lambda,\mu}$, then $W^\perp$ is also a submodule of $V^\imath(\lambda,\mu)$ since $\rho$ preserves $\mathbf{U}^\imath$.
With some index set $\Gamma$, let
\[
  \{ V(\gamma) \mid \gamma \in \Gamma \}
\]
denote a complete set of representatives for the isomorphism classes of simple $\mathbf{U}^\imath$-modules appearing as a submodule of $V^\imath(\lambda, \mu)$ for some $\lambda, \mu \in X^+$.
Since the $\mathbf{U}^\imath$-module $V^\imath(\lambda,\mu)$ is integrable by Proposition \ref{prop: int Umod is int Uimod}, so is each $V(\gamma)$.

Let $V$ be a simple integrable $\mathbf{U}^\imath$-module, and $v \in V$ a nonzero weight vector.
Then, there exist $\lambda, \mu \in X^+$ and a $\mathbf{U}^\imath$-module homomorphism $V^\imath(\lambda, \mu) \rightarrow V$ which sends $v^\imath_{\lambda, \mu}$ to $v$.
Since $V$ is simple, this homomorphism must be surjective.
The semisimplicity of $V^\imath(\lambda, \mu)$ implies that it has a simple submodule isomorphic to $V$.
Therefore, $V$ is isomorphic to $V(\gamma)$ for some $\gamma \in \Gamma$.

The argument above shows that the set $\{ V(\gamma) \mid \gamma \in \Gamma \}$ is a complete set of representatives for the isomorphism classes of simple integrable $\mathbf{U}^\imath$-modules.

\begin{lem}\label{lem: almost all Bidot annihilate Vgam}
  Let $\gamma \in \Gamma$.
  Then, almost all the elements in $\dot{\mathbf{B}}^\imath$ acts on the $\mathbf{U}^\imath$-module $V(\gamma)$ as $0$:
  \[
    \sharp \{ b \in \dot{\mathbf{B}}^\imath \mid b V(\gamma) \neq 0 \} < \infty.
  \]
\end{lem}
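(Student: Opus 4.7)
The strategy is to embed $V(\gamma)$ into a finite-dimensional universal module $V^\imath(\lambda,\mu)$ and then argue that the annihilator of this module in $\dot{\mathbf{U}}^\imath$ is spanned by a subset of $\dot{\mathbf{B}}^\imath$; the finite-dimensionality of the quotient will then force only finitely many basis elements to act nontrivially.

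By the discussion preceding the lemma, $V(\gamma)$ is isomorphic to a $\mathbf{U}^\imath$-submodule of $V^\imath(\lambda,\mu)$ for some $\lambda,\mu\in X^+$. Since $I$ is of finite type and $V^\imath(\lambda,\mu)\simeq L^\imath(\lambda,\mu)\subseteq V(\lambda)\otimes V(\mu)$ by Proposition \ref{prop: Vi simeq Li}, the module $V^\imath(\lambda,\mu)$ is finite-dimensional. Any $b\in\dot{\mathbf{B}}^\imath$ with $bV^\imath(\lambda,\mu)=0$ satisfies $bV(\gamma)=0$, so it suffices to prove $\sharp\{b\in\dot{\mathbf{B}}^\imath\mid bV^\imath(\lambda,\mu)\neq 0\}<\infty$.

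Let $\mathcal{I}:=\{u\in\dot{\mathbf{U}}^\imath\mid uV^\imath(\lambda,\mu)=0\}$; then $\{b\in\dot{\mathbf{B}}^\imath\mid bV^\imath(\lambda,\mu)\neq 0\}=\dot{\mathbf{B}}^\imath\setminus\mathcal{I}$. The evaluation map $\dot{\mathbf{U}}^\imath\to\operatorname{End}_{\mathbb{Q}(q)}(V^\imath(\lambda,\mu))$ has a finite-dimensional target, so $\dot{\mathbf{U}}^\imath/\mathcal{I}$ is finite-dimensional. If I can show that $\mathcal{I}$ is spanned by $\mathcal{I}\cap\dot{\mathbf{B}}^\imath$, then the image of $\dot{\mathbf{B}}^\imath\setminus\mathcal{I}$ becomes a $\mathbb{Q}(q)$-linear basis of the finite-dimensional quotient, and is therefore finite.

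The main obstacle is establishing the based-annihilator claim $\mathcal{I}=\operatorname{span}_{\mathbb{Q}(q)}(\mathcal{I}\cap\dot{\mathbf{B}}^\imath)$, which is the precise place where the stability (strong compatibility) of $\dot{\mathbf{B}}^\imath$ from \cite{BaWa18}, \cite{BaWa21} is deployed: it endows $L^\imath(\lambda,\mu)$ with an $\imath$canonical basis $B^\imath(\lambda,\mu)$ compatible with $\dot{\mathbf{B}}^\imath$, and the positivity/bar-invariance of the resulting action should force annihilators of based modules to be based, in analogy with Lusztig's theorem for $\dot{\mathbf{U}}$. Dually, one may try to verify instead that the space of matrix coefficients of $V^\imath(\lambda,\mu)$ is spanned by a subset of the basis of $(\dot{\mathbf{U}}^\imath)^*$ dual to $\dot{\mathbf{B}}^\imath$; the elements of $\dot{\mathbf{B}}^\imath$ pairing nontrivially with this finite-dimensional space then form a finite subset.
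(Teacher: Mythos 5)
There is a genuine gap. Your reduction to showing $\sharp\{b\in\dot{\mathbf{B}}^\imath\mid bV^\imath(\lambda,\mu)\neq 0\}<\infty$ is fine, but the entire weight of the argument then rests on the claim that the annihilator $\mathcal{I}$ of $V^\imath(\lambda,\mu)$ in $\dot{\mathbf{U}}^\imath$ is spanned by $\mathcal{I}\cap\dot{\mathbf{B}}^\imath$, and you do not prove this: you only say that positivity and bar-invariance ``should force'' it, in analogy with Lusztig's based-ideal results for $\dot{\mathbf{U}}$. That analogue is not available in the paper's setting and does not follow from the stability property as it is used here; stability only tells you that $b\mapsto bv^\imath_{\lambda,\mu}$ sends $\dot{\mathbf{B}}^\imath$ to the $\imath$canonical basis of $V^\imath(\lambda,\mu)$ together with $0$, i.e.\ it controls the action of $b$ on the single cyclic vector $v^\imath_{\lambda,\mu}$, not on the whole module, and a fortiori it says nothing about whether two-sided annihilators are based. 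Your fallback suggestion --- that the matrix coefficients of $V^\imath(\lambda,\mu)$ are spanned by part of the dual basis $\dot{\mathbf{B}}^{\imath*}$ --- is essentially Theorem \ref{thm: dual icb spans Oi}, whose proof in the paper depends on the present lemma, so that route is circular.

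The paper avoids the based-annihilator question entirely by exploiting integrability vector by vector: choose a finite weight basis $\{v^\gamma_i\}_{i\in I_\gamma}$ of $V(\gamma)$, and for each $i$ use integrability to produce $\lambda_i,\mu_i\in X^+$ and a homomorphism $V^\imath(\lambda_i,\mu_i)\rightarrow V(\gamma)$ sending $v^\imath_{\lambda_i,\mu_i}$ to $v^\gamma_i$. If $bV(\gamma)\neq 0$ then $bv^\gamma_i\neq 0$ for some $i$, hence $bv^\imath_{\lambda_i,\mu_i}\neq 0$, and stability gives $\sharp\{b\mid bv^\imath_{\lambda_i,\mu_i}\neq 0\}=\dim V^\imath(\lambda_i,\mu_i)<\infty$ for each of the finitely many $i$. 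If you want to keep your reduction to $V^\imath(\lambda,\mu)$, the same device repairs it: apply this argument to a weight basis of $V^\imath(\lambda,\mu)$ itself (which is integrable by Proposition \ref{prop: int Umod is int Uimod}), rather than trying to show $\mathcal{I}$ is based.
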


\begin{proof}
  Let $B(\gamma) = \{ v^\gamma_i \mid i \in I_\gamma \}$ be a basis of $V(\gamma)$ consisting of weight vectors.
  Since $V(\gamma)$ is finite-dimensional, the index set $I_\gamma$ is finite.
  By the integrability of $V(\gamma)$, for each $i \in I_\gamma$, we may take $\lambda_i,\mu_i \in X^+$ such that there exists a $\mathbf{U}^\imath$-module homomorphism $V^\imath(\lambda_i,\mu_i) \rightarrow V(\gamma)$ which sends $v^\imath_{\lambda_i,\mu_i}$ to $v^\gamma_i$.

  Since the $\dot{\mathbf{B}}^\imath$ possesses the stability property, we have
  \[
    \sharp \{ b \in \dot{\mathbf{B}}^\imath \mid b v^\imath_{\lambda,\mu} \neq 0 \} = \dim V^\imath(\lambda,\mu) \ \text{ for all } \lambda,\mu \in X^+.
  \]
  Hence, the number of elements $b \in \dot{\mathbf{B}}^\imath$ such that $b v^\imath_{\lambda_i,\mu_i} \neq 0$ for some $i \in I_\gamma$ is finite.
  
  Now, suppose that $b \in \dot{\mathbf{B}}^\imath$ acts on $V(\gamma)$ as nonzero.
  Then, there exists $i \in I_\gamma$ such that $b v^\imath_{\lambda_i,\mu_i} \neq 0$.
  By the argument above, the number of such elements is finite.
\end{proof}

\subsection{Space of matrix coefficients}
Given a linear space $V$, let $V^*$ denote the linear dual space $\operatorname{Hom}_{\mathbb{Q}(q)}(V, \mathbb{Q}(q))$, and $\langle \ \mid \ \rangle \colon V \times V^* \rightarrow \mathbb{Q}(q)$ the canonical pairing.

Set $\dot{\mathbf{U}}^{\imath *} := (\dot{\mathbf{U}}^\imath)^*$.
It is equipped with the following $\mathbf{U}^\imath$-bimodule structure:
\[
  \langle u \mid x \phi y \rangle = \langle y u x \mid \phi \rangle \ \text{ for all } x,y \in \mathbf{U}^\imath,\ u \in \dot{\mathbf{U}}^\imath,\ \phi \in \dot{\mathbf{U}}^{\imath *}.
\]

For each $\gamma \in \Gamma$, $v \in V(\gamma)$, and $\phi \in V(\gamma)^*$, let $c^\gamma_{v, \phi} \in \dot{\mathbf{U}}^{\imath *}$ denote the corresponding matrix coefficient:
\[
  \langle x \mid c^\gamma_{v, \phi} \rangle = \langle xv \mid \phi \rangle \ \text{ for all } x \in \dot{\mathbf{U}}^\imath.
\]
The matrix coefficients $\{ c^\gamma_{v,\phi} \mid v \in V(\gamma),\ \phi \in V(\gamma)^* \}$ form a simple $\mathbf{U}^\imath$-bimodule isomorphic to $V(\gamma) \otimes V(\gamma)^*$; the matrix coefficient $c^\gamma_{v, \phi}$ corresponds to the vector $v \otimes \phi$.

Let $\mathbf{O}^\imath$ denote the subspace of $\dot{\mathbf{U}}^{\imath *}$ consisting of $\phi \in \dot{\mathbf{U}}^{\imath *}$ such that the $\mathbf{U}^\imath$-submodule $\mathbf{U}^\imath \phi$ and the right $\mathbf{U}^\imath$-submodule $\phi \mathbf{U}^\imath$ are integrable.

\begin{lem}
  Every integrable cyclic $\mathbf{U}^\imath$-module is finite-dimensional.
\end{lem}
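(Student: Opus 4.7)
The plan is to reduce to the case of a cyclic module generated by a \emph{weight} vector, and then use the defining surjection from some $V^\imath(\lambda,\mu)$ together with the identification $V^\imath(\lambda,\mu) \simeq L^\imath(\lambda,\mu) \subseteq V(\lambda) \otimes V(\mu)$ given by Proposition \ref{prop: Vi simeq Li}. Since we are in the setting of Section \ref{sec: qca}, where $I$ is assumed to be of finite type, $V(\lambda)$ and $V(\mu)$ are finite-dimensional, and the bound on $V$ follows by passing to the quotient.

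More concretely, let $V$ be an integrable cyclic $\mathbf{U}^\imath$-module with generator $v$. First I would decompose $v = \sum_{r=1}^s v_r$ into its weight components $v_r \in V_{\zeta_r}$, as allowed by the fact that integrable modules are weight modules. From the inclusion $\mathbf{U}^\imath v \subseteq \sum_r \mathbf{U}^\imath v_r$ and $V = \mathbf{U}^\imath v$, we obtain $V = \sum_{r=1}^s \mathbf{U}^\imath v_r$. Each summand is a submodule of the integrable $V$, hence itself integrable. Since a finite sum of finite-dimensional subspaces is finite-dimensional, it suffices to treat the case where the generator $v$ is a single weight vector, of weight $\zeta \in X^\imath$.

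Next, by Definition \ref{def: integrable Ui mod} applied to the weight vector $v \in V_\zeta$, there exist $\lambda, \mu \in X^+$ and a $\mathbf{U}^\imath$-module homomorphism $\pi \colon V^\imath(\lambda, \mu) \to V$ with $\pi(v^\imath_{\lambda, \mu}) = v$. Because $v$ generates $V$, $\pi$ is surjective. Finally, by Proposition \ref{prop: Vi simeq Li}, $V^\imath(\lambda, \mu) \simeq L^\imath(\lambda, \mu)$, which is a $\mathbf{U}$-submodule of $V(\lambda) \otimes V(\mu)$; under the finite type assumption, the latter is finite-dimensional, and hence so is $V^\imath(\lambda,\mu)$. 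Therefore its quotient $V$ is finite-dimensional.

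There is no real obstacle here, as the hard work has already been done in identifying $V^\imath(\lambda,\mu)$ with $L^\imath(\lambda,\mu)$ and in formulating integrability so as to yield a cyclic cover by such universal modules. The only point to be slightly careful about is the initial reduction to a weight-vector generator, which uses that the weight decomposition of a generator still generates.
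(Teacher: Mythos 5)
Your proposal is correct and follows essentially the same route as the paper: reduce to a weight-vector generator, invoke the defining surjection $V^\imath(\lambda,\mu) \twoheadrightarrow V$ from Definition \ref{def: integrable Ui mod}, and conclude from the finite-dimensionality of $V^\imath(\lambda,\mu)$ in finite type. The paper simply states the weight-vector reduction as ``without loss of generality,'' whereas you spell out the decomposition of the generator into weight components; this is a harmless elaboration, not a different argument.
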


\begin{proof}
  Let $V$ be an integrable cyclic $\mathbf{U}^\imath$-module with a cyclic vector $v$.
  Without any loss of generality, we may assume that $v$ is a weight vector.
  By the definition of integrable $\mathbf{U}^\imath$-modules, there exist $\lambda,\mu \in X^+$ and a $\mathbf{U}^\imath$-module homomorphism $V^\imath(\lambda,\mu) \rightarrow V$ which sends $v^\imath_{\lambda,\mu}$ to $v$.
  Since $V$ is cyclic, the homomorphism must be surjective.
  Hence, the finite-dimensionality of $V^\imath(\lambda,\mu)$ implies that of $V$.
  Thus, we complete the proof.
\end{proof}

The following two statements can be proved by a similar way to classical results (e.g. see \cite[6.2.6]{Pro07}).

\begin{lem}
  Let $\phi \in \dot{\mathbf{U}}^{\imath *}$.
  Then, the following are equivalent:
  \begin{itemize}
    \item $\mathbf{U}^\imath \phi$ is integrable.
    \item $\phi \mathbf{U}^\imath$ is integrable.
  \end{itemize}
\end{lem}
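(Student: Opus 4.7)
My plan is to prove the equivalence by characterizing both conditions in terms of matrix coefficients: in either case, the submodule generated by $\phi$ is integrable if and only if $\phi$ is a finite sum of matrix coefficients of simple integrable $\mathbf{U}^\imath$-modules $V(\gamma)$. Since the matrix-coefficient characterization is manifestly left--right symmetric, the equivalence will follow.

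I begin with the forward direction. Suppose $\mathbf{U}^\imath \phi$ is integrable. By the preceding lemma, it is finite-dimensional. Moreover, it is semisimple: the nondegenerate bilinear form on each $V^\imath(\lambda, \mu)$ (available because $\rho$ preserves $\mathbf{U}^\imath$) makes each $V^\imath(\lambda, \mu)$ semisimple, and every integrable cyclic module is a quotient of some $V^\imath(\lambda, \mu)$. Thus $\mathbf{U}^\imath \phi = \bigoplus_{k=1}^m W_k$ with $W_k \simeq V(\gamma_k)$ for certain $\gamma_k \in \Gamma$. Write $\phi = \sum_k \phi_k$ accordingly. For each $k$, fix an isomorphism $g_k \colon V(\gamma_k) \xrightarrow{\sim} W_k$, set $v_k := g_k^{-1}(\phi_k) \in V(\gamma_k)$, and define $\psi_k \in V(\gamma_k)^*$ by
\[
  \psi_k(w) := \sum_{\zeta \in X^\imath} g_k(w)(1_\zeta),
\]
which is a finite sum because $g_k(w)$ is a weight vector, hence supported on finitely many subspaces $\dot{\mathbf{U}}^\imath 1_\zeta$. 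A direct computation using the rule $(x \phi)(u) = \phi(ux)$ and the equality $\sum_\zeta 1_\zeta x = x$ in $\dot{\mathbf{U}}^\imath$ yields $\phi_k(x) = \psi_k(x v_k) = c^{\gamma_k}_{v_k, \psi_k}(x)$ for every $x \in \dot{\mathbf{U}}^\imath$, so $\phi = \sum_k c^{\gamma_k}_{v_k, \psi_k}$.

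The span of all matrix coefficients of a fixed $V(\gamma)$ is a $\mathbf{U}^\imath$-bimodule isomorphic to $V(\gamma) \otimes V(\gamma)^*$, which is finite-dimensional and integrable as a left module via the first factor; integrability as a right module follows by an analogous argument (or, equivalently, by twisting via $\rho$, which preserves $\mathbf{U}^\imath$ and interchanges the notions of left and right integrability). Consequently $\phi \mathbf{U}^\imath \subseteq \bigoplus_k V(\gamma_k) \otimes V(\gamma_k)^*$ is integrable as a right module. The converse implication is obtained by the mirror of the argument above: $\phi \mathbf{U}^\imath$ integrable forces $\phi$ to be a finite sum of matrix coefficients of simple integrable modules, and then the same sum makes $\mathbf{U}^\imath \phi$ integrable.

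The main obstacle I foresee lies in the bookkeeping around the non-unital algebra $\dot{\mathbf{U}}^\imath$: one must ensure that the sum defining $\psi_k$ is finite, that $\psi_k$ is genuinely well-defined and linear on $V(\gamma_k)$, and that the verification $\phi_k = c^{\gamma_k}_{v_k, \psi_k}$ correctly uses the idempotent decomposition $x = \sum_\zeta 1_\zeta x$ together with the module structure $(x\phi)(u) = \phi(ux)$. A subsidiary point is aligning the definition of right integrability used in the theorem with the matrix-coefficient picture, which is handled cleanly by transporting right modules to left modules through $\rho$.
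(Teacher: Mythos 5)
Your argument is correct and is essentially the classical matrix-coefficient argument (Procesi 6.2.6) that the paper itself invokes for this lemma: integrability of either one-sided submodule forces finite-dimensionality and semisimplicity, hence forces $\phi$ to be a finite sum of matrix coefficients of the $V(\gamma)$, and that condition is manifestly left--right symmetric. The one point worth tightening is the finiteness of $\sum_\zeta g_k(w)(1_\zeta)$: a weight vector of $\dot{\mathbf{U}}^{\imath *}$ is a priori supported on all $\zeta'$ with $\langle h, \zeta' \rangle = \langle h, \zeta \rangle$ for every $h \in Y^\imath$ (the induced pairing on $Y^\imath \times X^\imath$ is not perfect, as $X^\imath$ has torsion), and one should note that this set is nonetheless finite because the kernel of $X^\imath \rightarrow \operatorname{Hom}(Y^\imath, \mathbb{Z})$ is a finitely generated group of rank zero.
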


\begin{prop}\label{prop: Peter-Weyl type decomp}
  The $\mathbf{O}^\imath$ is a $\mathbf{U}^\imath$-bisubmodule of $\dot{\mathbf{U}}^{\imath *}$.
  Moreover, the assignment
  \[
    V(\gamma) \otimes V(\gamma)^* \rightarrow \dot{\mathbf{U}}^{\imath *};\ v \otimes \phi \mapsto c^\gamma_{v,\phi}
  \]
  gives rise to a Peter-Weyl type decomposition:
  \[
    \bigoplus_{\gamma \in \Gamma} V(\gamma) \otimes V(\gamma)^* \simeq \mathbf{O}^\imath \ (\text{as $\mathbf{U}^\imath$-bimodules}).
  \]
\end{prop}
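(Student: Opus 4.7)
The plan is to proceed in three stages. First, I will check that $\mathbf{O}^\imath$ is a $\mathbf{U}^\imath$-bisubmodule of $\dot{\mathbf{U}}^{\imath*}$: for $\phi \in \mathbf{O}^\imath$ and $x,y \in \mathbf{U}^\imath$, the left module $\mathbf{U}^\imath(x\phi y)$ is contained in $(\mathbf{U}^\imath \phi) y$, which is the image of the integrable module $\mathbf{U}^\imath \phi$ under right multiplication by $y$ (a left $\mathbf{U}^\imath$-module map), hence integrable; the integrability of $(x\phi y) \mathbf{U}^\imath$ then follows from the preceding lemma. I will then verify that $\Phi_\gamma \colon V(\gamma) \otimes V(\gamma)^* \to \dot{\mathbf{U}}^{\imath*}$, $v \otimes \xi \mapsto c^\gamma_{v,\xi}$, is a well-defined $\mathbf{U}^\imath$-bimodule homomorphism landing in $\mathbf{O}^\imath$. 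Unwinding the defining rules $(x\phi)(u) = \phi(ux)$ and $(\phi y)(u) = \phi(yu)$ yields $x \cdot c^\gamma_{v,\xi} = c^\gamma_{xv,\xi}$ and $c^\gamma_{v,\xi} \cdot y = c^\gamma_{v, \xi \cdot y}$; these identities simultaneously express the bimodule property of $\Phi_\gamma$ and display $\mathbf{U}^\imath c^\gamma_{v,\xi}$ and $c^\gamma_{v,\xi} \mathbf{U}^\imath$ as homomorphic images of the integrable modules $V(\gamma)$ and $V(\gamma)^*$, respectively.

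For the injectivity of each $\Phi_\gamma$ and the directness of $\bigoplus_\gamma \Phi_\gamma$, I will first establish that every finite-dimensional integrable $\mathbf{U}^\imath$-module is semisimple: such a module is a homomorphic image of a finite direct sum of $V^\imath(\lambda,\mu)$'s, each of which is semisimple under the standing $\rho$-invariance hypothesis, and the decomposition of the quotient is obtained by splitting off its kernel inside that semisimple ambient module. Consequently every element of $\mathbf{O}^\imath$ generates a finite-dimensional integrable sub-bimodule that is semisimple, with isotypic components labelled by the $V(\gamma)$'s. Injectivity of $\Phi_\gamma$ will then follow from a standard Jacobson density argument on the finite-dimensional simple $\mathbf{U}^\imath$-module $V(\gamma)$, and directness of the sum follows because the images of distinct $\Phi_\gamma$'s sit in disjoint isotypic components of $\mathbf{O}^\imath$.

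The surjectivity of $\bigoplus_\gamma \Phi_\gamma$ is the main obstacle. Given $\phi \in \mathbf{O}^\imath$, the module $\mathbf{U}^\imath \phi$ is integrable and cyclic, hence finite-dimensional by the preceding lemma, hence semisimple by the previous paragraph. I will decompose $\mathbf{U}^\imath \phi = \bigoplus_r M_r$ with simple summands $M_r \simeq V(\gamma_r)$, write $\phi = \sum_r \phi_r$ accordingly, and fix isomorphisms $\iota_r \colon V(\gamma_r) \xrightarrow{\sim} M_r \subseteq \dot{\mathbf{U}}^{\imath*}$ with $\iota_r(v_r) = \phi_r$. The delicate step is to recognize each $\iota_r(w)$ as a matrix coefficient of $V(\gamma_r)$. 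Unwinding the weight-module structure on $\dot{\mathbf{U}}^{\imath*}$ shows that any weight vector $\psi$ of weight $\zeta$ vanishes on $\dot{\mathbf{U}}^\imath 1_{\zeta'}$ whenever $\zeta' \neq \zeta$. Defining $\xi_r \in V(\gamma_r)^*$ on weight vectors by $\xi_r(w) := \iota_r(w)(1_{\operatorname{wt}(w)})$ and extending linearly, I will combine the left-module identity $\iota_r(yw) = y \cdot \iota_r(w)$ with this support property and the idempotent identity $1_{\operatorname{wt}(yw)} y = y 1_{\operatorname{wt}(w)}$ in $\dot{\mathbf{U}}^\imath$ (valid for weight elements $y \in \mathbf{U}^\imath$) to compute $\iota_r(w)(y) = \iota_r(yw)(1_{\operatorname{wt}(yw)}) = \xi_r(yw) = c^{\gamma_r}_{w,\xi_r}(y)$; the extension to all of $\dot{\mathbf{U}}^\imath$ is then routine. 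This yields $\phi_r = c^{\gamma_r}_{v_r, \xi_r} \in \operatorname{Im}(\Phi_{\gamma_r})$ and completes the proof. The main effort is in this last weight-and-idempotent bookkeeping.
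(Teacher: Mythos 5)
Your overall route is exactly the classical Peter--Weyl argument that the paper itself defers to (it gives no proof beyond ``can be proved by a similar way to classical results, see [Pro07, 6.2.6]''), and the structure --- bisubmodule property via images of integrable modules, semisimplicity of finite-dimensional integrable modules from the $\rho$-invariant form on $V^\imath(\lambda,\mu)$, and surjectivity by realizing each simple summand of $\mathbf{U}^\imath\phi$ as a space of matrix coefficients via evaluation at the idempotents $1_\zeta$ --- is the right one and is consistent with the surrounding lemmas (finite-dimensionality of cyclic integrable modules, left/right equivalence of integrability).

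Two steps are asserted more confidently than your justification supports. First, the ``support property'': you claim that unwinding the weight-module structure shows a weight vector $\psi \in \dot{\mathbf{U}}^{\imath *}$ of weight $\zeta$ vanishes on $\dot{\mathbf{U}}^{\imath}1_{\zeta'}$ for $\zeta' \neq \zeta$. The computation $\langle u 1_{\zeta'} \mid K_h \psi\rangle = q^{\langle h,\zeta'\rangle}\langle u1_{\zeta'}\mid\psi\rangle$ only separates $\zeta$ from $\zeta'$ when some $h \in Y^\imath$ pairs differently with them, and the pairing $Y^\imath \times X^\imath \to \mathbb{Z}$ is degenerate in general --- in the quasi-split rank-one case one even has $Y^\imath = 0$ while $X^\imath \simeq \mathbb{Z}/2\mathbb{Z}$, so no $K_h$ is available at all. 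Recall that a weight $\mathbf{U}^\imath$-module carries its decomposition $V = \bigoplus_\zeta V_\zeta$ as part of the data precisely because of this; to make your computation work you must take the weight structure on $\mathbf{O}^\imath$ to be the one induced by the idempotent decomposition $\dot{\mathbf{U}}^{\imath*} = \prod_\zeta (\dot{\mathbf{U}}^\imath 1_\zeta)^*$ (so that the support property holds by construction) and then verify that an element of $\mathbf{O}^\imath$ has finite support there; this is where the actual work sits, and it is not supplied by the eigenvalue argument. Second, the injectivity of $V(\gamma)\otimes V(\gamma)^* \to \dot{\mathbf{U}}^{\imath*}$ via Jacobson density requires $\operatorname{End}_{\mathbf{U}^\imath}(V(\gamma)) = \mathbb{Q}(q)$: density only gives surjectivity onto $\operatorname{End}_D(V(\gamma))$ for the division algebra $D = \operatorname{End}_{\mathbf{U}^\imath}(V(\gamma))$, and if $D \supsetneq \mathbb{Q}(q)$ the map $V(\gamma)\otimes_{\mathbb{Q}(q)}V(\gamma)^* \to \dot{\mathbf{U}}^{\imath*}$ has a kernel for dimension reasons. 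Since $\mathbb{Q}(q)$ is not algebraically closed and simple $\mathbf{U}^\imath$-modules have no obvious one-dimensional highest-weight space, this splitness is a genuine input (implicit in the proposition as stated) that your proof should either establish or explicitly assume.
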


Let $\dot{\mathbf{B}}^{\imath *} := \{ \delta_b \mid b \in \dot{\mathbf{B}}^\imath \}$ denote the dual basis to $\dot{\mathbf{B}}^\imath$:
\[
  \langle b_1 \mid \delta_{b_2} \rangle = \delta_{b_1, b_2} \ \text{ for all } b_1,b_2 \in \dot{\mathbf{B}}^\imath.
\]

\begin{thm}\label{thm: dual icb spans Oi}
  The $\dot{\mathbf{B}}^{\imath *}$ is a linear basis of $\mathbf{O}^\imath$.
\end{thm}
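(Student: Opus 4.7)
The strategy decomposes into three steps: linear independence, the inclusion $\dot{\mathbf{B}}^{\imath *} \subseteq \mathbf{O}^\imath$, and spanning. Linear independence is immediate, since $\dot{\mathbf{B}}^{\imath *}$ is by definition dual to the basis $\dot{\mathbf{B}}^\imath$ of $\dot{\mathbf{U}}^\imath$.

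For the inclusion $\dot{\mathbf{B}}^{\imath *} \subseteq \mathbf{O}^\imath$, the plan is to realize each $\delta_b$ as a matrix coefficient of some $V^\imath(\lambda, \mu)$, which, being integrable by Proposition \ref{prop: int Umod is int Uimod}, forces $\delta_b \in \mathbf{O}^\imath$ via the Peter-Weyl decomposition (Proposition \ref{prop: Peter-Weyl type decomp}). Fix $b \in \dot{\mathbf{B}}^\imath 1_\zeta$; by the stability (strong compatibility) property of $\dot{\mathbf{B}}^\imath$, choose $\lambda, \mu \in X^+$ with $\overline{w_\bullet \lambda + \mu} = \zeta$ large enough that the image of $b$ under the projection $\dot{\mathbf{U}}^\imath 1_\zeta \twoheadrightarrow V^\imath(\lambda, \mu)$, namely $b v^\imath_{\lambda, \mu}$, is a nonzero $\imath$canonical basis element of $V^\imath(\lambda, \mu)$. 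Let $(b v^\imath_{\lambda, \mu})^\vee \in V^\imath(\lambda, \mu)^*$ denote the dual basis vector. For any $x \in \dot{\mathbf{U}}^\imath$, strong compatibility implies that $(b v^\imath_{\lambda,\mu})^\vee(x v^\imath_{\lambda, \mu})$ equals the coefficient of $b$ in the $\dot{\mathbf{B}}^\imath$-expansion of $x 1_\zeta$, which in turn equals $\langle x \mid \delta_b \rangle$. Hence $\delta_b = c^\gamma_{v^\imath_{\lambda, \mu}, (b v^\imath_{\lambda, \mu})^\vee}$ for a suitable $\gamma \in \Gamma$ (namely, the simple summand of $V^\imath(\lambda, \mu)$ containing $b v^\imath_{\lambda, \mu}$), placing $\delta_b$ in $\mathbf{O}^\imath$.

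For spanning, by the Peter-Weyl decomposition it suffices to expand each matrix coefficient $c^\gamma_{v, \phi}$ as a finite linear combination of $\delta_b$. Compute
\[
  \langle b \mid c^\gamma_{v, \phi} \rangle = \phi(b v) \quad \text{for all } b \in \dot{\mathbf{B}}^\imath.
\]
By Lemma \ref{lem: almost all Bidot annihilate Vgam}, the set $B_\gamma := \{ b \in \dot{\mathbf{B}}^\imath \mid b V(\gamma) \neq 0 \}$ is finite, so $\phi(b v) = 0$ for all but finitely many $b$. Comparing values on the basis $\dot{\mathbf{B}}^\imath$ yields $c^\gamma_{v, \phi} = \sum_{b \in B_\gamma} \phi(b v)\, \delta_b$, a genuine finite sum in $\dot{\mathbf{U}}^{\imath *}$. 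Combined with the first step, this proves $\dot{\mathbf{B}}^{\imath *}$ is a basis of $\mathbf{O}^\imath$.

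The main obstacle is the second step: extracting from the stability/strong-compatibility hypothesis the precise statement that (a) for every $b \in \dot{\mathbf{B}}^\imath$ there exist $\lambda, \mu \in X^+$ with $b v^\imath_{\lambda, \mu} \neq 0$, and (b) the coefficient of $b$ in $x 1_\zeta$ is recovered by the matrix coefficient of the dual $\imath$canonical basis vector. Once these are in hand, the third step is essentially the same finite-support argument as in Kashiwara's proof of the quantum Peter-Weyl theorem \cite[Section 7]{Kas93}.
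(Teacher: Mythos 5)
Your proof is correct, and the spanning step is essentially identical to the paper's (formally expand $c^\gamma_{v,\phi}$ in the $\delta_b$ and use Lemma \ref{lem: almost all Bidot annihilate Vgam} for finiteness). Where you genuinely diverge is the inclusion $\dot{\mathbf{B}}^{\imath *} \subseteq \mathbf{O}^\imath$. The paper proves it by verifying the integrability criterion directly on $\delta_b$: choosing $\lambda,\mu$ with $b v^\imath_{\lambda,\mu} \neq 0$, it shows $\delta_b x = 0$ for suitable $x \in \{ E_j^{(a_j+1)}, F_j^{(b_j+1)}, \mathfrak{B}_{k,\zeta}^{(c_k+1)} \}$ (again by evaluating against the basis $\{ b' v^\imath_{\lambda,\mu} \}$ of $V^\imath(\lambda,\mu)$), so that $\delta_b \mathbf{U}^\imath$ is integrable by Proposition \ref{prop: local nilp => integrable} and $\mathbf{U}^\imath \delta_b$ by the left--right equivalence lemma. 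You instead realize $\delta_b$ outright as a matrix coefficient of $V^\imath(\lambda,\mu)$ attached to $v^\imath_{\lambda,\mu}$ and the dual vector $(b v^\imath_{\lambda,\mu})^\vee$, and then invoke Proposition \ref{prop: Peter-Weyl type decomp}. Both arguments consume exactly the same stability input, namely that $\{ b' v^\imath_{\lambda,\mu} \mid b' v^\imath_{\lambda,\mu} \neq 0 \}$ is a basis of $V^\imath(\lambda,\mu)$ and that every $b$ survives in some $V^\imath(\lambda,\mu)$ (the paper also assumes the latter without further comment), so the "main obstacle" you flag is no worse for you than for the paper. Your route is shorter and bypasses the integrability criterion and the left--right lemma for this step; the paper's route has the virtue of exhibiting explicitly which divided powers kill $\delta_b$. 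One small imprecision: $\delta_b$ is not $c^\gamma_{v,\phi}$ for a single $\gamma$, since neither $v^\imath_{\lambda,\mu}$ nor $b v^\imath_{\lambda,\mu}$ need lie in one simple summand of $V^\imath(\lambda,\mu)$; it is a finite sum of such matrix coefficients over the simple constituents. This does not affect the conclusion, as $\mathbf{O}^\imath$ is a subspace containing all of them.
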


\begin{proof}
  First, we shall show that $\dot{\mathbf{B}}^{\imath *} \subset \mathbf{O}^\imath$.

  Let $b \in \dot{\mathbf{B}}^\imath$.
  Take $\lambda, \mu \in X^+$ in a way such that $b v^\imath_{\lambda, \mu} \neq 0$.
  Set
  \[
    B := \{ b' \in \dot{\mathbf{B}}^\imath \mid b' v^\imath_{\lambda, \mu} \neq 0 \}.
  \]
  Note that the set $B$ is finite.
  In fact, since the $\mathbf{U}^\imath$-module $V^\imath(\lambda, \mu)$ is integrable, there exist $(a_j)_{j \in I_\bullet}, (b_j)_{j \in I_\bullet} \in \mathbb{Z}_{\geq 0}^{I_\bullet}$, $(c_k)_{k \in I_\circ} \in \mathbb{Z}_{\geq 0}^{I_\circ}$ such that
  \[
    E_j^{(a_j+1)} b' v^\imath_{\lambda,\mu} = F_j^{(b_j+1)} b' v^\imath_{\lambda,\mu} = \mathfrak{B}_{k, \zeta}^{(c_k+1)} b' v^\imath_{\lambda,\mu} \ \text{ for all } j \in I_\bullet,\ k \in I_\circ,\ b' \in B,
  \]
  where $\zeta \in X^\imath$ denotes the weight of $b' v^\imath_{\lambda, \mu}$.

  We shall show that $\delta_b x = 0$ for all
  \[
    x \in \{ E_j^{(a_j+1)},\ F_j^{(b_j+1)},\ \mathfrak{B}_{k, \zeta}^{(c_k+1)} \mid j \in I_\bullet,\ k \in I_\circ,\ \zeta \in X^\imath \}.
  \]
  Let $b_1 \in \dot{\mathbf{B}}^\imath$ and write
  \begin{align}\label{eq: expansion of xb1}
    x b_1 = \sum_{b_2 \in \dot{\mathbf{B}}^\imath} c_{b_2} b_2
  \end{align}
  with $c_{b_2} \in \mathbb{Z}[q,q^{-1}]$.
  Then, we have
  \[
    \langle b_1 \mid \delta_b x \rangle = \langle x b_1 \mid \delta_b \rangle = c_b.
  \]
  We want to show that $c_b = 0$.
  By equation \eqref{eq: expansion of xb1}, we have
  \[
    x b_1 v^\imath_{\lambda, \mu} = \sum_{b_2 \in B} c_{b_2} b_2 v^\imath_{\lambda, \mu}.
  \]
  The left-hand side equals $0$ by the definition of $x$.
  Since the elements $\{ b_2 v^\imath_{\lambda,\mu} \mid b_2 \in B \}$ forms a basis of $V^\imath(\lambda,\mu)$, we must have $c_{b_2} = 0$ for all $b_2 \in B$.
  In particular, we obtain $c_b = 0$.
  
  Next, we shall prove that the $\dot{\mathbf{B}}^{\imath *}$ spans $\mathbf{O}^\imath$.
  Let $\{ v^\gamma_i \mid i \in I_\gamma \}$ be a linear basis of $V(\gamma)$ and $\{ \phi^\gamma_i \mid i \in I_\gamma \}$ its dual basis.
  By the Peter-Weyl type decomposition \ref{prop: Peter-Weyl type decomp}, $\mathbf{O}^\imath$ has the following linear basis:
  \[
    \{ c^\gamma_{v^\gamma_i, \phi^\gamma_j} \mid i,j \in I_\gamma \}.
  \]
  We can formally express these basis elements as
  \[
    c^\gamma_{v^\gamma_i, \phi^\gamma_j} = \sum_{b \in \dot{\mathbf{B}}^\imath} c^\gamma_{v^\gamma_i, \phi^\gamma_j}(b) \delta_b.
  \]
  Since $c^\gamma_{v^\gamma_i,\phi^\gamma_j}(b) = 0$ for almost all $b \in \dot{\mathbf{B}}^\imath$ (Lemma \ref{lem: almost all Bidot annihilate Vgam}), the right-hand side is a finite sum.
  This proves the assertion.
\end{proof}

\begin{cor}
  The space of matrix coefficients for the semisimple integrable $\mathbf{U}^\imath$-modules coincides with the quantum coordinate algebra of $\dot{\mathbf{U}}^\imath$ in \cite[3.2.2]{BaSo22}.
\end{cor}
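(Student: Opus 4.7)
The plan is to reduce the corollary to Theorem \ref{thm: dual icb spans Oi} by identifying both sides of the claimed equality with the $\mathbb{Q}(q)$-linear span of the dual basis $\dot{\mathbf{B}}^{\imath *}$.

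First, I would handle the left-hand side. By the discussion preceding Lemma \ref{lem: almost all Bidot annihilate Vgam}, the set $\{V(\gamma) \mid \gamma \in \Gamma\}$ is a complete set of representatives for the isomorphism classes of simple integrable $\mathbf{U}^\imath$-modules. A semisimple integrable $\mathbf{U}^\imath$-module is therefore a direct sum of copies of various $V(\gamma)$, and hence its matrix coefficients are finite $\mathbb{Q}(q)$-linear combinations of the $c^\gamma_{v, \phi}$ with $\gamma \in \Gamma$, $v \in V(\gamma)$, and $\phi \in V(\gamma)^*$. The Peter--Weyl type decomposition of Proposition \ref{prop: Peter-Weyl type decomp} then identifies the span of all such matrix coefficients with $\mathbf{O}^\imath$.

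Second, I would handle the right-hand side by simply recalling Bao--Song's definition in \cite[3.2.2]{BaSo22}: the quantum coordinate algebra of $\dot{\mathbf{U}}^\imath$ is defined as the $\mathbb{Q}(q)$-linear span of the dual basis $\dot{\mathbf{B}}^{\imath *}$ inside $\dot{\mathbf{U}}^{\imath *}$. Theorem \ref{thm: dual icb spans Oi} asserts that $\dot{\mathbf{B}}^{\imath *}$ is a linear basis of $\mathbf{O}^\imath$, so these two spaces coincide.

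Since all the substantive work -- the integrability estimates, the stability of $\dot{\mathbf{B}}^\imath$, and the characterization of $\mathbf{O}^\imath$ via the $\imath$canonical basis -- is already subsumed in Theorem \ref{thm: dual icb spans Oi}, no real obstacle remains; the corollary is an essentially formal consequence once one matches terminology between the two papers. The only point that deserves a brief remark is that semisimple integrable modules bring nothing beyond what the simple ones provide, so the left-hand side really does reduce to the span of the $c^\gamma_{v,\phi}$.
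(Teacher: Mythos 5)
Your proposal is correct and follows essentially the same route as the paper: identify the space of matrix coefficients with $\mathbf{O}^\imath$ (via the classification of simple integrable modules and Proposition \ref{prop: Peter-Weyl type decomp}), identify Bao--Song's coordinate algebra with the span of $\dot{\mathbf{B}}^{\imath *}$, and conclude by Theorem \ref{thm: dual icb spans Oi}. The paper's own proof is just a terser version of exactly this argument.
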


\begin{proof}
  The former is the $\mathbf{O}^\imath$, and the latter is the subspace of $\dot{\mathbf{U}}^{\imath *}$ spanned by the dual basis $\dot{\mathbf{B}}^{\imath *}$.
  Hence, the assertion follows from Theorem \ref{thm: dual icb spans Oi}.
\end{proof}

\section*{Declaration}
The author has no conflicts of interest to declare.


\end{document}